\theoremstyle{plain}
\newtheorem{theorem}{Theorem}[section]
\newtheorem{proposition}[theorem]{Proposition}
\newtheorem{lemma}[theorem]{Lemma}
\newtheorem{corollary}[theorem]{Corollary}
\theoremstyle{definition}
\newtheorem{definition}[theorem]{Definition}
\newtheorem{example}[theorem]{Example}
\newtheorem{remark}[theorem]{Remark}
\newcommand\Div{\operatorname{Div}}
\newcommand\D{\operatorname{Deg}}
\newcommand\Set{\operatorname{Set}}
\begin{document}

\title{Semigroups, Cartier divisors and convex bodies}

\author{Askold Khovanskii\thanks{This work was partially supported by the Canadian Grant No. 156833-17.}}
\maketitle
{\it To Bernard Teissier on the occasion of his 80-th birthday}
\begin{abstract}
The theory of Newton--Okounkov bodies provides direct relations  and points out analogies between the theory of mixed volumes of convex bodies, on the one hand, and the intersection theories of Cartier divisors and  of Shokurov $b$-divisors, on the other hand. The classical inequalities between mixed volumes of convex bodies correspond to inequalities between intersection indices of nef Cartier divisors on an irreducible projective variety and between the birationally invariant intersection indices of  nef  type Shokurov $b$-divisors on an irreducible algebraic variety. Such algebraic inequalities are known as Khovanskii--Teissier inequalities.  Our proof of these inequalities is based on simple geometric inequalities on two dimensional convex bodies and on pure algebraic arguments. The classical  geometric inequalities follow from the algebraic inequalities. We collected results  from a few papers which  were  published during the last forty  five years. Some theorems  of the present paper were never stated, but all ideas needed for their proofs are contained in the published papers. So we avoid all heavy  proofs. Our goal is to present an overview of the area. Notions of  homogeneous polynomials on commutative semigroups and of their polarizations  provide an  adequate  language  for discussing  the subject. We use this language throughout  the paper.
\end{abstract}

{\it MSC 2020:} 14C20, {52A39}

{\it Keywords:} {commutative  semigroup, Cartier divisor, Shokurov
$(b)$-divisor,  convex body, mixed volume,
Alexandrov--Fenchel inequality, Newton--Okounkov body,
Khovanskii--Teissier inequality}

\section{Introduction}\label{sect1}

In the paper, we discuss an interplay between convex geometry  and algebraic geometry. We  focus on analogies and direct relations between
mixed volumes of convex bodies and the  intersection index of Cartier divisors on  projective  varieties and,  more generally, the  bi-rationally invariant intersection index  of Shokurov $b$-divisors on an arbitrary algebraic variety. Throughout the paper we do not consider Weil divisors. From now on, the term {\it divisor} always means a Cartier divisor.

There are  other analogies  between convex and algebraic geometries which deal with special algebraic   varieties equipped with  actions of reductive groups, like toric and spherical varieties and spherical homogeneous spaces. These special algebraic varieties are related with special convex bodies: with integral convex polytopes  and with special classes of polytopes  like Gelfand--Zetlin polytopes, string  polytopes and so on. These analogies involve not only volumes but also combinatorics of polytopes, structure of integral  points and other invariants. We will not discuss  these relations of convex   and algebraic  geometry with only one exception: we state and sketch a proof of the BKK (Bernstein--Koushnirenko--Khovanskii) theorem which allows us to provide an algebraic proof of the classical inequalities between mixed volumes of convex bodies.

Commutative semigroups equipped with  homogeneous polynomials are basic simple objects which appear, on the one hand,  in the theory of
divisors and  Shokurov $b$-divisors and, on the other hand,  in the theory of mixed volumes of convex bodies. In section \ref{sect2},  we present  without  proof some   theorems on homogeneous polynomials on semigroups and on their polarizations, which we use in the paper.  Complete presentation of this material can be found in \cite{Kh25}.

In section \ref{sect3},  we discuss the semigroup $K(X)$ of finite dimensional spaces of rational functions on an irreducible algebraic variety $X$ and
some sub-semigroups of $K(X)$, their Grothendieck semigroups and Grothendieck groups (see  \cite{K-K10}). These semigroups  are related to the group of Shokurov $b$-divisors on $X$ and the group of  divisors on projective  models   of $X$ (the group of Shokurov $b$-divisors on $X$
is a direct limit of groups of  divisors on all projective models of $X$, see section \ref{subsec3.4},  \cite{Isk03} and  \cite{K-K14}).

In section  \ref{sect4}, we consider semigroups related to the Newton polyhedra theory. We use these semigroups in section \ref{subsec7.2}, where we
sketch  proofs of Koushnirenko's theorem and the BKK theorem (see  \cite{Kh92}).

In section \ref{sect5}, we discuss the bi-rationally invariant intersection index on the Grothendieck group $G(X)$ of the semigroup $K(X)$ of an irreducible algebraic variety~$X$. In fact, one can consider it as the intersection  index on the group of Shokurov $b$-divisors on $X$ (see section \ref{subsec3.4}).  In section \ref{sect5}, we follow our original definition of the bi-rationally invariant intersection  index   \cite{K-K10}, which is quite natural. It works over the field of complex numbers, but it is not applicable to algebraic varieties over arbitrary algebraically closed fields. One can easily prove analogous results  over arbitrary algebraically closed fields using the approach based on the Shokurov $b$-divisors. For any projective model $Y$ of $X$, the group of  divisors on $Y$ is isomorphic to a subgroup of $G(X)$. The intersection index of  divisors on $Y$ coincides with the restriction on this subgroup of the intersection index on $G(X)$. In section \ref{sect5}, we also present formulas expressing the intersection index on $G(X)$ through the function $\D(L)$ on $K(X)$  which assigns to an element $L\in K(X)$ its self-intersection  index. Such expression  is a particular case of general formulas for the polarization of a homogeneous polynomial on a semigroup.

In section \ref{sect6}, we discuss the classical results of the theory of mixed volumes of convex bodies. The present paper was inspired by these
classical results and by the BKK theorem which relates these results with algebraic geometry.

The theory of Newton--Okounkov bodies  relates the intersection theory and the  theory of convex bodies.  It is based on  study of subs-semigroups of the lattice  $\Bbb Z^n$ of integral  points in $\Bbb R^n$. These results are presented in detail in  \cite{K-K12}.

In section \ref{sect7}, we state the most
important (for our paper) theorem   from  \cite{K-K12} and apply  it to prove Koushnirenko's theorem, the BKK  theorem and to prove geometric type inequalities for the bi-rationally invariant  intersection  index on the semigroup $K(X)$.
Then we extend these inequalities for  {\it psef type} elements in the Grothendieck group $G(X)$ of $K(X)$ and for the intersection index of nef  divisors on an irreducible projective variety.

Algebraic inequalities of such type are known as Khovanskii--Teissier inequalities (first examples  of such inequalities appeared in  \cite{Tei79},  \cite{Kh88}).

In section \ref{sect8}, we prove the classical geometric inequalities on mixed volumes of convex bodies via the inequalities in algebraic geometry. In
their  turn the algebraic inequalities are based on  geometric inequalities for convex bodies in the plane which are similar to the classical isoperimetric inequality. Thus, our proof of geometric inequalities besides  pure algebraic arguments uses only simple geometric inequalities concerning planar convex bodies.

In section \ref{sect9}, we discuss a necessary and sufficient  condition on a collection of basepoint free linear systems on a projective variety, under which generic members of these linear systems have empty intersection. These conditions are similar to the Minkowski condition on a set $M$ of convex bodies in $\Bbb R^n$ under which any $n$-tuple of convex bodies in $\Bbb R^n$ containing the set $M$ has mixed volume
equal to zero (see section~\ref{sect6}).

Vladlen Timorin and the referee of this paper made many valuable suggestions that helped me improve the exposition. They also assisted with editing my English. My wife, Tatiana Belokrinitskaya, helped me in writing the article, in particular with typing and editing. I am very grateful to all of them.

\section{Polynomial functions on commutative semigroups}\label{sect2}

\subsection{Commutative semigroups and related objects}
By a {\it semigroup}, we mean a commutative semigroup with an identity element. In this section, we refer to the semigroup operation  as addition and denote the identity element  by $0$.

Two elements $a,b$ in a  semigroup $S$  are {\it equivalent} $a\sim b $  if there is  $c\in S$ such that $a+c=b+c$.  A semigroup $S$ has  the
{\it cancellation property}  if   the  relation  $a\sim b $ for $a,b\in S$  implies that $a=b$. Consider the factor-set $S/\sim$ of the semigroup
$S$ under  the equivalence relation $\sim$.

\begin{definition}
  The factor set $S/\sim$ inherits the addition operation from $S$ and
has the structure of an additive semigroup with the identity element
which is called the  {\it  Grothendieck semigroup}  $G_s (S)$ of $S$.
The natural map of $S$ to $G_s(S)$ is denoted by $\rho: S \to G_s(S)$.
\end{definition}

By construction,  the Grothendieck semigroup $G_s(S)$ has the cancellation property, and the map $\rho: S \to G_s(S)$ is universal for this property.

The {\it Grothendieck group} of  $S$ is the group $G(S)$  of formal
differences of elements from the  Grothendieck semigroup $G_s(S)$. Let
us give its formal definition.
Consider the following equivalence relation on pairs $(a,b)\in S^2$ of
elements $a,b\in S$:  pairs $(a,b)$ and $(c,d)$ are {\it $g$-equivalent
  $(a,b)\sim_{g}(c,d)$}  if $a+d\sim b+c$.
  The factor-set $S^2/\sim_g$ inherits the addition operation from the
semigroup $S^2$ and has the structure of an additive semigroup  whose zero element is the class of all pairs of the form $(a,a)$. The semigroup $G(S)=S^2/\sim_g$ is a group:
  each element $(a,b)\in G(S)$ has the inverse element $-(a,b)=(b,a)$,
i.e., $(a,b)+(b,a)=(a+b,a+b)=0$

\begin{definition} The group $G(S)$ is called the  {\it Grothendieck
group}  of the semigroup~$S$. The  natural homomorphism
of $S$ to $G(S)$ which maps   $a\in S$ to the $g$-equivalence class of
the pair $(a,0)$,
is denoted by $\pi:S\to G(S)$.
\end{definition}

The natural map $\pi$ is the composition of the natural map  $\rho :S\to
G_s(S)$ and the   embedding $\tilde {\pi}:G_s(S)\to G(S)$ which maps an
equivalence class of an element $a\in S$ to the $g$-equivalence class of
the pair $(a,0)$.  The embedding $\tilde {\pi}$ allows us to consider
$G_s(S)$ as  a sub-semigroup in  $G(S)$.  Each element in $G(S)$ can be
represented as a difference of two elements of the sub-semigroup
$G_s(S)\subset G(S)$.

\begin{definition} With a commutative  group $G$ one associates a
$\Bbb Q$-vector space $M=G\otimes_\Bbb Z \Bbb Q$. There is a natural homomorphism
of $G$ to the additive  group of the $\Bbb Q$-space $M$   which maps
$g\in G$ to the vector $g\otimes 1\in M$. Its kernel is equal to the
torsion  $TG$ of the group $G$ and it provides  an embedding of $G/TG$
to $M$. So one can consider the group $G/TG$ as a subgroup in  the
additive group of  $M$. Vectors from this subgroup span the $\Bbb
Q$-space~$M$.

If $G$ is the Grothendieck group $G(S)$ of a semigroup $S$, denote  by
$M(S)$   the  $\Bbb Q$-vector space $M=G\otimes_\Bbb Z \Bbb Q$  and call
it the $\Bbb Q$-{\it vector space associated with} $S$.
\end{definition}

One can  check the following lemma:

\begin{lemma} For any homomorphism $\tau:S\to G$, where $G$ is a
commutative group, there are unique homomorphisms $\tau_1:G_s(S)\to G$
and $\tau_2:G(S)\to G$ such that $\tau =\tau_1\circ \rho$ and
$\tau=\tau_2\circ\pi$, where $\rho:S\to G_s(S)$ and $\pi: S\to G(S)$ are
the maps defined above.

If $G$ is an additive subgroup of a $\Bbb Q$-vector space $L$, then there is a unique $\Bbb Q$-linear map $\tau_3:M(S)\to $L$  $ such that $\tau
=\tau_3\circ \tilde \pi$, where $\tilde \pi:S\to M(S)$ is the natural
map.
  \end{lemma}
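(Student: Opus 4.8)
The plan is to prove the universal property statement for $\rho$, $\pi$, and $\tilde\pi$ by unwinding the constructions given just above the lemma. The three assertions are of the same flavour, so I would handle them in sequence, each building on the previous one.

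First I would treat the factorization through $\rho:S\to G_s(S)$. Given a homomorphism $\tau:S\to G$ into a group $G$, the key observation is that $\tau$ respects the equivalence relation $\sim$: if $a\sim b$, say $a+c=b+c$ in $S$, then $\tau(a)+\tau(c)=\tau(b)+\tau(c)$ in $G$, and since $G$ is a group we may cancel $\tau(c)$ to get $\tau(a)=\tau(b)$. Hence $\tau$ descends to a well-defined map $\tau_1:G_s(S)\to G$ on equivalence classes, which is automatically a homomorphism because addition on $G_s(S)$ is induced from $S$, and it satisfies $\tau=\tau_1\circ\rho$ by construction. Uniqueness is immediate: $\rho$ is surjective, so any map agreeing with $\tau$ after precomposition with $\rho$ is determined on all of $G_s(S)$.

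Next I would factor $\tau_1:G_s(S)\to G$ through the embedding $\tilde\pi:G_s(S)\hookrightarrow G(S)$, i.e.\ produce $\tau_2:G(S)\to G$. Since $G_s(S)$ has the cancellation property and $G(S)$ is its group of formal differences, the recipe is forced: define $\tau_2$ on the $g$-equivalence class of a pair $(a,b)$ by $\tau_2(a,b)=\tau_1(\rho(a))-\tau_1(\rho(b))$, using that $G$ is a group. One checks this is well defined on $\sim_g$-classes (if $a+d\sim b+c$ then applying $\tau_1$ and rearranging in $G$ gives equality of the two differences), that it is a homomorphism, and that $\tau_2\circ\pi=\tau$ since $\pi(a)$ is the class of $(a,0)$. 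Setting $\tau_2' := \tau_2$ also handles the composition $\tau=\tau_2\circ\pi$ directly once we recall $\pi=\tilde\pi\circ\rho$. Uniqueness again follows because every element of $G(S)$ is a difference of elements of $\pi(S)$, on which $\tau_2$ is pinned down.

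For the final $\Bbb Q$-linear statement, suppose $G$ sits inside a $\Bbb Q$-vector space $L$. Composing the previous step with the inclusion $G\hookrightarrow L$ gives a homomorphism $G(S)\to L$; since $L$ is torsion-free as an abelian group, this map kills the torsion $TG(S)$ and hence factors through $G(S)/TG(S)$, which we identified with a subgroup of $M(S)$ spanning it over $\Bbb Q$. A homomorphism from $G(S)/TG(S)$ into the $\Bbb Q$-vector space $L$ extends uniquely to a $\Bbb Q$-linear map $\tau_3:M(S)\to L$ (this is the universal property of $\otimes_{\Bbb Z}\Bbb Q$, or concretely: on a $\Bbb Q$-basis of $M(S)$ coming from a maximal independent family in $G(S)/TG(S)$, the values are forced, and $\Bbb Q$-linearity determines the rest). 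Then $\tau_3\circ\tilde\pi=\tau$ where here $\tilde\pi$ denotes the composite $S\to G(S)\to M(S)$, and uniqueness holds because $\tilde\pi(S)$ generates $M(S)$ as a $\Bbb Q$-vector space. The only mildly delicate point in the whole argument is bookkeeping the several maps all denoted $\tilde\pi$ or $\pi$ and checking well-definedness of $\tau_2$ on $\sim_g$-classes; everything else is a routine diagram chase, which is presumably why the paper only says ``one can check.''
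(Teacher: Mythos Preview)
Your proof is correct and is exactly the routine verification the paper has in mind; the paper itself gives no proof beyond the phrase ``one can check the following lemma,'' so there is nothing to compare against. Your argument matches the standard universal-property unwinding, and the only simplification available is that for the third step you can invoke the universal property of $-\otimes_{\Bbb Z}\Bbb Q$ directly on $\tau_2:G(S)\to L$ (since $L$ is already a $\Bbb Q$-vector space) rather than first passing through $G(S)/TG(S)$.
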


\subsection{Definition of polynomial function on a semigroup}\label{subsec2.2}
In the sections \ref{subsec2.2}--\ref{subsec2.4}, we present without
proof some results on homogeneous polynomials on semigroups and on their
polarizations. A complete presentation of this material can be found in
\cite{Kh25}.

Let $\bold k$ be a field of characteristic zero.

\begin{definition} A $\bold k$-valued function  $P:S\to \bold k$  on a
semigroup $S$ is said to be a  {\it homogeneous polynomial} of degree $n$
if for any $m$-tuple $\bold a=(a_1,\dots, a_m)$  of elements  $a_i\in S$
   the function $P_{\bold a}(k_1,\dots, k_m)= P(k_1a_1+\dots+k_ma_m)$ is
a  homogeneous polynomial of degree $n$ on $m$-tuples $(k_1,\dots, k_m)$
of nonnegative integers. In the same way, one defines  a {\it
polynomial}  and   a {\it polynomial of degree $\leq n$}~on~$S$.
\end{definition}

\begin{definition} We say that $a,b\in S$ are {\it $t$-equivalent},
$a\sim_t b$, if there is  $N>0 $ such that $Na\sim Nb$.
\end{definition}

One can check the following lemma:

\begin{lemma}\label{polynomial} If $a\sim_t b$  and $P:S\to \bold k$ is
a polynomial, then $P(a)=P(b)$.
  \end{lemma}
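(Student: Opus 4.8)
The plan is to unwind the definition of $t$-equivalence and then exploit the fact that a polynomial on $S$, restricted to the submonoid generated by finitely many elements, is given by an honest polynomial with coefficients in $\bold k$, which we may freely evaluate outside $\Bbb Z_{\geq 0}$. Since $a\sim_t b$, there are an integer $N>0$ and an element $c\in S$ with $Na+c=Nb+c$. A first, too naive, attempt would use only the scaled relations $t(Na+c)=t(Nb+c)$, i.e. $P(tNa+tc)=P(tNb+tc)$ for all $t\geq 0$; but here the auxiliary term $tc$ is tied to the scaling parameter $t$, and letting $t\to 0$ yields only the trivial identity $P(0)=P(0)$. As $S$ is not assumed to have the cancellation property, $c$ cannot simply be cancelled, so sharper bookkeeping is needed.

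The key step is to produce, from the single relation $Na+c=Nb+c$, a two-parameter family of relations in which the multiple of $c$ is decoupled from the multiple of $a$ (or $b$). I claim that $kNa+c=kNb+c$ for every integer $k\geq 1$, by induction on $k$: the case $k=1$ is the hypothesis, and if $(k-1)Na+c=(k-1)Nb+c$, then adding $Na$ to both sides and using $Na+c=Nb+c$ once more gives
\[
kNa+c=Na+(k-1)Nb+c=(k-1)Nb+(Na+c)=(k-1)Nb+(Nb+c)=kNb+c .
\]
Adding $(j-1)c$ to both sides for $j\geq 1$ (the case $k=0$ being trivial for any $j$) yields $kNa+jc=kNb+jc$, hence
\[
P(kNa+jc)=P(kNb+jc)\qquad\text{for all }k\geq 0,\ j\geq 1 .
\]

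Now apply the definition of a polynomial to the triple $(a,b,c)$: the function $(x,y,z)\mapsto P(xa+yb+zc)$ on $\Bbb Z_{\geq 0}^{3}$ is the restriction of a polynomial $h\in\bold k[x,y,z]$, unique since $\bold k$ has characteristic zero. Consider $\psi(x,z):=h(x,0,z)-h(0,x,z)\in\bold k[x,z]$. By the displayed identities $\psi$ vanishes at every point of $\{(kN,j):k\geq 0,\ j\geq 1\}$, a product of two infinite subsets of $\bold k$ and hence Zariski dense in $\bold k^{2}$; therefore $\psi\equiv 0$. Evaluating at $(x,z)=(1,0)$ gives $P(a)=h(1,0,0)=h(0,1,0)=P(b)$. (Equivalently one may interpolate one variable at a time: first in $x$ over $\{0,N,2N,\dots\}$ to obtain $h(x,0,j)=h(0,x,j)$ for each fixed $j\geq 1$, then in $j$ over $\{1,2,\dots\}$ to reach $j=0$.)

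I expect the only real obstacle to be the middle step: without cancellation one cannot erase $c$ directly, and a naive rescaling keeps $c$ coupled to the "direction" parameter. The induction above is precisely the device that separates the two parameters, after which the conclusion is a routine polynomial-interpolation argument over the infinite field $\bold k$.
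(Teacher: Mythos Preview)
Your argument is correct. The induction showing $kNa+c=kNb+c$ for all $k\geq 1$ is the right device to decouple the scaling from the auxiliary element $c$, and the two-variable interpolation over the Zariski-dense grid $\{(kN,j):k\geq 0,\ j\geq 1\}$ is a clean way to finish. One minor simplification: you could split the argument in two, first proving that $a\sim b$ implies $P(a)=P(b)$ (from $a+c=b+c$ one gets $a+jc=b+jc$ for all $j\geq 1$, and a one-variable interpolation in $j$ suffices), and then handling $\sim_t$ by observing that your induction gives $kNa\sim kNb$ for all $k\geq 1$, hence $P(kNa)=P(kNb)$, and a one-variable interpolation in $k$ applied to $r(x)=h(x,0,0)-h(0,x,0)$ finishes. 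This avoids the two-variable density step, but your version is equally valid.

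As for comparison with the paper: the paper does not actually prove this lemma. It is stated with the remark ``One can check the following lemma'' and the reader is referred to \cite{Kh25} for a complete treatment of polynomials on semigroups. So there is no proof in the paper to compare against; your proposal fills that gap with a self-contained argument.
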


\begin{lemma}\label{red} Any polynomial $P$ on a semigroup $S$  can be
induced  from  unique polynomials  on the Grothendieck semigroup
$G_s(S)$ and on the   Grothendieck group $G(S)$. In turn, any
polynomial on  $G(S)$ can be induced from a unique polynomial on the $\Bbb Q$-vector space  $M(S)=G(S)\otimes_\Bbb Z \Bbb Q$.
\end{lemma}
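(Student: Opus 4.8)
The plan is to build everything on the universal property recorded in the previous lemma, pushing polynomials through the chain $S \xrightarrow{\rho} G_s(S) \xrightarrow{\tilde\pi} G(S) \to M(S)$ rather than chasing coordinates. First I would treat the passage from $S$ to $G_s(S)$. Given a homogeneous polynomial $P$ of degree $n$ on $S$, I want a function $\bar P$ on $G_s(S)$ with $\bar P \circ \rho = P$; for this it suffices to check that $P$ is constant on the fibers of $\rho$, i.e.\ that $a \sim b$ implies $P(a) = P(b)$. But $a \sim b$ means $a + c = b + c$ for some $c$, which in particular gives $a \sim_t b$ in the sense of the preceding definition (take $N = 1$, or note $a+c = b+c$ already does it), so Lemma \ref{polynomial} yields $P(a) = P(b)$. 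Hence $\bar P$ is well defined as a set map; that it is again a homogeneous polynomial of degree $n$ is immediate, since for any tuple $\bar a_1,\dots,\bar a_m \in G_s(S)$ we may lift to $a_1,\dots,a_m \in S$ and observe $\bar P_{\bar{\mathbf a}}(k_1,\dots,k_m) = P_{\mathbf a}(k_1,\dots,k_m)$, which is a homogeneous polynomial of degree $n$ in the $k_i$ by hypothesis. Uniqueness of $\bar P$ is forced by surjectivity of $\rho$.

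Next I would extend $\bar P$ from $G_s(S)$ to the Grothendieck group $G(S)$. The key point is that every element of $G(S)$ is a difference $x - y$ with $x, y \in G_s(S)$ (via the embedding $\tilde\pi$), and that $\bar P$, being a homogeneous polynomial of degree $n$ on the \emph{cancellative} semigroup $G_s(S)$, admits a unique polynomial extension to the group it generates. The cleanest way to see this: fix the polarization $\Phi$ of $\bar P$, the symmetric $n$-linear-type function with $\Phi(x,\dots,x) = \bar P(x)$ described in the material of \S\ref{subsec2.2}; since $G_s(S)$ generates $G(S)$ and $\Phi$ is additive in each argument on $G_s(S)$, it extends uniquely to a symmetric $\mathbb Z$-multiadditive function on $G(S)$, and the diagonal of that extension is the desired polynomial on $G(S)$ restricting to $\bar P$. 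Alternatively one argues directly: define $\widetilde P(x-y)$ by the binomial-type expansion of $\bar P$ at $x$ and $-y$ using the polarization, check independence of the representation $x - y$ using cancellation in $G(S)$, and verify the homogeneity condition on tuples of differences. Either route gives a unique homogeneous polynomial $\widetilde P$ on $G(S)$ with $\widetilde P \circ \tilde\pi = \bar P$, hence $\widetilde P \circ \pi = P$.

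Finally, for the passage from $G(S)$ to $M(S) = G(S) \otimes_{\mathbb Z} \mathbb Q$: a polynomial on the group $G(S)$ first descends to $G(S)/TG(S)$ because torsion elements are $t$-equivalent to $0$ (if $Nt = 0$ then $Nt \sim N\cdot 0$, so Lemma \ref{polynomial} applies to the induced polynomial on $G(S)$), so the polynomial is constant on torsion cosets and descends to the lattice-like group $G(S)/TG(S)$ sitting inside $M(S)$. Then a homogeneous polynomial of degree $n$ on this finitely-generated-in-each-direction group extends uniquely to a $\mathbb Q$-polynomial map on $M(S)$: again use the polarization, which is $\mathbb Z$-multiadditive on $G(S)/TG(S)$ and therefore extends uniquely to a $\mathbb Q$-multilinear symmetric form on $M(S)$ by $\mathbb Q$-bilinearity of tensor product, its diagonal being the sought polynomial. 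I expect the main obstacle to be the $G(S) \to M(S)$ step — specifically, making precise the claim that a degree-$n$ homogeneous polynomial function (defined only via its behavior on nonnegative-integer combinations) genuinely corresponds to the diagonal of a symmetric multiadditive form, so that $\mathbb Q$-linear extension is available; this is exactly the polarization correspondence quoted without proof from \cite{Kh25} in \S\ref{subsec2.2}, and invoking it correctly, together with checking that the extension respects the homogeneity-on-tuples definition over $\mathbb Q$, is where the real content lies. The $S \to G_s(S)$ and $G_s(S) \to G(S)$ steps are comparatively routine once Lemma \ref{polynomial} and cancellation are in hand.
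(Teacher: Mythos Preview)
Your proposal is correct and aligns with the paper's approach: the paper gives only a one-sentence hint (``Proof of Lemma \ref{red} is based on Lemma \ref{polynomial}''), and you have expanded this correctly, using Lemma \ref{polynomial} for the descent steps and the polarization machinery of \S\ref{subsec2.3}--\ref{subsec2.4} for the extension steps. Since the paper presents all of \S\ref{subsec2.2}--\ref{subsec2.4} as a package imported without proof from \cite{Kh25}, your use of polarization (which appears just after Lemma \ref{red} in the text) is legitimate and indeed is exactly the mechanism the paper itself invokes in Theorem \ref{maint} and Corollary \ref{ongindex} to write down the extension to $G(S)$.
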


Proof of Lemma \ref{red} is based on  Lemma \ref{polynomial}.
We consider  polynomials on $G_s(S)$, $G(S)$  and on
$M(S)=G(S)\otimes_\Bbb Z \Bbb Q$  from Lemma \ref{red} as natural
extensions of the polynomial $P$ on $S$ and  denote them by the same
symbol $P$.

\subsection{Polarization of  homogeneous polynomials}\label{subsec2.3}

One can check  that {\it a function $P:S\to \bold k$ is a homogeneous
polynomial of degree 1 if and only if $P$ is a homomorphism of $S$ to
the additive group of $\bold k$}. Let $S^n$ be a direct sum of
$n$ copies a semigroup $S$.

\begin{definition} A function $MP_n:S^n\to \bold k$ is an {\it
$n$-polarization} of a function $P:S\to \bold k$ if:
\begin{enumerate}
\item  on the diagonal, it coincides with $P$, i.e., if
$a_1=\dots=a_n=a$, then
\begin{equation*}MP_n(a_1,\dots, a_n)=P(a);\end{equation*}

\item  $MP_n(a_1,\dots, a_n)$ is symmetric  with respect to permuting
the elements
$a_1,\dots,a_n$;

\item $MP_n$  is additive  in each argument.
For the first argument, it  means that for  $a_1', a_1'',
a_2,\dots,a_n\in S$, we have:
\begin{equation*}MP_n(a'_1 + a''_1, a_2,\dots, a_n)=
MP_n(a'_1,a_2,
\dots,a_n)+ MP_n(a''_1,a_2, \dots,a_n).\end{equation*}
\end{enumerate}
\end{definition}

\begin{lemma}\label{n!} If  a function $P:S\to \bold k$ admits an
$n$-polarization $MP_n:S^n\to \bold k$, then $P$ is a homogeneous degree
$n$ polynomial. Moreover, the value of $MP_n$ at  $(a_1,\dots,a_n)\in
S^n$ is equal to  the coefficient at the polynomial
$P(k_1a_1+\dots+k_na_n)$  in the monomial $k_1\cdot \ldots\cdot k_n$ divided by $n!$.
\end{lemma}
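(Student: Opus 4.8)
The plan is to exploit condition (3), additivity in each argument, to expand $MP_n(k_1a_1+\dots+k_ma_m,\dots,k_1a_1+\dots+k_ma_m)$ multilinearly, and then to read off the claimed formula. First I would fix an $m$-tuple $\bold a=(a_1,\dots,a_m)$ and nonnegative integers $k_1,\dots,k_m$, and consider $P(k_1a_1+\dots+k_ma_m)$. By condition (1) this equals $MP_n(v,\dots,v)$ where $v=k_1a_1+\dots+k_ma_m$ sits in every slot. Using additivity in each of the $n$ arguments separately (first replacing $v$ by $k_1a_1+\dots+k_ma_m$ in the first slot and distributing, then the second slot, and so on), one expands this into a sum over all functions $f:\{1,\dots,n\}\to\{1,\dots,m\}$ of terms of the form $k_{f(1)}\cdots k_{f(n)}\,MP_n(a_{f(1)},\dots,a_{f(n)})$. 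Grouping by the multiset $\{f(1),\dots,f(n)\}$ and using symmetry (condition (2)), this is manifestly a homogeneous polynomial of degree $n$ in $(k_1,\dots,k_m)$; since the $m$-tuple was arbitrary, $P$ is a homogeneous polynomial of degree $n$ in the sense of the earlier definition.

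For the second assertion I would specialize $m=n$ and then extract the coefficient of the squarefree monomial $k_1k_2\cdots k_n$ in $P(k_1a_1+\dots+k_na_n)$. In the multilinear expansion above, a term $k_{f(1)}\cdots k_{f(n)}\,MP_n(a_{f(1)},\dots,a_{f(n)})$ contributes to the monomial $k_1\cdots k_n$ exactly when $f$ is a bijection of $\{1,\dots,n\}$; there are $n!$ such bijections, and by symmetry each contributes the same value $MP_n(a_1,\dots,a_n)$. Hence the coefficient of $k_1\cdots k_n$ equals $n!\cdot MP_n(a_1,\dots,a_n)$, which is the claimed identity after dividing by $n!$.

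I expect the only subtle point to be making the multilinear expansion rigorous, since the $k_i$ are nonnegative integers rather than formal scalars: one should think of $k_ia_i$ as the $k_i$-fold sum $a_i+\dots+a_i$ in $S$ and apply condition (3) repeatedly, which is legitimate because additivity is a statement about actual elements of $S$. One mild care is needed when some $k_i=0$: then $k_ia_i=0$, and one checks $MP_n(0,a_2,\dots,a_n)=0$, which follows from additivity in the first argument applied to $0=0+0$. With these routine checks in place the expansion goes through verbatim, and both conclusions follow. (Implicitly one also uses that $MP_n$, being additive in each slot, is determined on finite sums, so no convergence or limiting issue arises.)
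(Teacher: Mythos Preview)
Your proof is correct and follows essentially the same approach as the paper: write $P(k_1a_1+\dots+k_ma_m)=MP_n(k_1a_1+\dots+k_ma_m,\dots,k_1a_1+\dots+k_ma_m)$ using condition~(1), expand by additivity to see a homogeneous degree~$n$ polynomial, and specialize to $m=n$ to read off the coefficient of $k_1\cdots k_n$. The paper's proof is simply terser, leaving the multilinear expansion and the $n!$ count of bijections implicit where you spell them out.
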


\begin{proof} Indeed, if an $n$-polarizations exists, then for any
$m$-tuple of elements $a_i\in S$ and any $m$-tuple of nonnegative
integers $k_i$ the following identity holds:
\begin{equation}\label{identity}
  P(k_1a_1+\dots+k_ma_m) =MP_n(k_1a_1+\dots+k_ma_m,
\dots,k_1a_1+\dots+k_ma_m).
\end{equation}
Additivity of $MP_n$ in each argument implies that
$P(k_1a_1+\dots+k_ma_m)$ is a homogeneous degree~$n$ polynomial in
$(k_1,\dots,k_m)$.  Identity (\ref{identity})  for $m=n$  implies
the last statement of lemma.
\end{proof}

Lemma \ref{n!} implies that if for $F:S\to \bold k$ an $n$-polarization
exists, then it is unique. The following theorem proves its existence for
any homogeneous polynomial of degree~$n$.

  \begin{theorem}
For any  degree $n$  homogeneous polynomial $P:S\to \bold k$, there
exists a unique $n$-polarization $MP_n$. Moreover, its value  at
$(a_1,\dots,a_n)\in S^n$ is given by the following formula:
\begin{equation}\label{eq1} MP_n(a_1,\dots, a_n)=\frac{1}{n!} \sum
_{1\leq k\leq n} \Bigg( \sum _{1\leq i_1\leq \dots \leq  i_k} (-1)^{n-k}
P(x+a_{i_1}+\dots+a_{i_k})\Bigg),
\end{equation}
where $x$ is an arbitrary element of $S$.
\end{theorem}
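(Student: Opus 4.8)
The plan is to verify directly that the right-hand side of (\ref{eq1}) satisfies the three defining properties of an $n$-polarization; uniqueness is already guaranteed by Lemma \ref{n!}, so existence is the only issue. The cleanest route is to reduce to a universal computation with polynomials in commuting integer variables and then invoke the standard finite-difference identity for multilinear coefficients.

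First I would fix a homogeneous polynomial $P$ of degree $n$ on $S$ and an element $x\in S$, and define $MP_n(a_1,\dots,a_n)$ by the formula in the statement. To check additivity in, say, the first argument, note that for any $m$-tuple $\bold a=(a_1,\dots,a_m)$ the function $Q_{\bold a}(k_1,\dots,k_m)=P(x+k_1a_1+\dots+k_ma_m)$ is a polynomial of degree $\le n$ in the $k_i$ (by the definition of homogeneous polynomial, applied to the tuple $(x,a_1,\dots,a_m)$, together with Lemma \ref{red} which lets us work on $M(S)$ where $x$ and the $a_i$ can be scaled freely). The key observation is that formula (\ref{eq1}) applied to the sub-tuple whose entries are among $a_1,\dots,a_m$ is exactly an iterated discrete partial difference operator $\Delta_1\cdots\Delta_n$ applied to $Q$, where $\Delta_j$ is the difference in the $j$-th slot, evaluated at the origin and divided by $n!$ — here one must carefully match the nested summation $\sum_{i_1\le\cdots\le i_k}$ and the sign $(-1)^{n-k}$ against the expansion of $\prod_{j=1}^n(\text{shift by }a_j - \text{id})$, using that $P$ depends on the $a_j$ only through their sum so repeated indices collapse as the formula dictates. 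Once (\ref{eq1}) is recognized as $\frac{1}{n!}(\Delta_1\cdots\Delta_n Q)(0)$, additivity in each argument, symmetry, and independence of the auxiliary element $x$ all follow from standard properties of finite differences of polynomials of degree $\le n$: the mixed difference of such a polynomial extracts $n!$ times the coefficient of $k_1\cdots k_n$, which is multilinear and symmetric in the $a_j$ and insensitive to a shift of origin.

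Concretely, the steps I would carry out are: (i) pass to $M(S)\otimes\bold k$ via Lemma \ref{red} so that $P$ is a genuine polynomial function of degree $n$ and all manipulations with $\Bbb Z$-linear combinations are legitimate; (ii) expand $\prod_{j=1}^{n}(T_{a_j}-\mathrm{Id})$, where $T_{a}$ is the shift operator $f\mapsto f(\cdot + a)$, apply it to $P$ and evaluate at $x$, and regroup terms by the size $k$ of the chosen subset of shifts and by the multiplicities with which indices repeat — this regrouping is precisely what turns the sum over subsets into the sum $\sum_{1\le i_1\le\cdots\le i_k}$ with sign $(-1)^{n-k}$; (iii) use that for a polynomial of degree $\le n$ the operator $\prod_{j=1}^n(T_{a_j}-\mathrm{Id})$ kills everything except the top-degree part and returns $n!$ times the coefficient of $k_1\cdots k_n$ in $P(x+\sum k_j a_j)$, which is visibly symmetric, multiadditive, and independent of $x$; (iv) restrict to the diagonal $a_1=\cdots=a_n=a$ and compare with Lemma \ref{n!} to see the diagonal value equals $P(a)$.

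The main obstacle I anticipate is step (ii): bookkeeping the collapse of repeated indices. Because $P$ sees only $a_{i_1}+\dots+a_{i_k}$, a subset of shift operators contributes the same term as any multiset with the same underlying sum, and one must check that the binomial-type multiplicities arising from the expansion of $\prod(T_{a_j}-\mathrm{Id})$ combine to give exactly the $(-1)^{n-k}$ weights and the ordered-tuple indexing $i_1\le\cdots\le i_k$ in (\ref{eq1}) — in effect an inclusion–exclusion / Möbius computation on the lattice of partitions of the index set. Everything else is routine once this combinatorial identity is pinned down, and it can be isolated as a purely formal lemma about the operator $\prod_{j=1}^n(T_{a_j}-\mathrm{Id})$ acting on functions on an abelian group, independent of $P$.
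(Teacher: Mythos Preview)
The paper does not actually prove this theorem: the opening of Section~\ref{subsec2.2} says explicitly that the results of Sections~\ref{subsec2.2}--\ref{subsec2.4} are presented without proof, with a complete treatment deferred to~\cite{Kh25}. So there is no in-paper argument to compare against, and your proposal should be judged on its own merits.

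Your overall strategy is the standard and correct one: identify the right-hand side of~(\ref{eq1}) with $\tfrac{1}{n!}\bigl(\prod_{j=1}^{n}(T_{a_j}-\mathrm{Id})\bigr)P$ evaluated at $x$, and then use that an $n$-fold finite difference of a degree-$n$ polynomial is constant (giving independence of $x$) and equals $n!$ times the coefficient of $k_1\cdots k_n$ in $P(x+\sum k_ja_j)$. Symmetry, multiadditivity, and the diagonal identity all drop out immediately, and uniqueness is Lemma~\ref{n!}. Passing to $M(S)$ via Lemma~\ref{red} to make the polynomial manipulations legitimate is also the right move.

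Where you go wrong is step~(ii). The expansion of $\prod_{j=1}^{n}(T_{a_j}-\mathrm{Id})$ is simply
\[
\sum_{I\subseteq\{1,\dots,n\}}(-1)^{n-|I|}\,T_{\sum_{i\in I}a_i},
\]
a sum over \emph{subsets}: each $a_j$ occurs at most once, there are no repeated indices, and there are no multiplicities to track. The inner sum in~(\ref{eq1}) should be read with strict inequalities $1\le i_1<\cdots<i_k\le n$; the paper's typography is simply loose here (the same notation recurs in Lemma~\ref{formula} and Theorem~\ref{ONBKK}). Your proposed ``inclusion--exclusion on the lattice of partitions'' to collapse repeated indices is solving a problem that does not exist, and the paragraph you flag as the ``main obstacle'' should be deleted: once the indexing is read correctly, the identification with the finite-difference operator is a one-line expansion. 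A related wrinkle worth noting is that genuine independence of the base point $x$ requires the $k=0$ term $(-1)^{n}P(x)$ as well; over $M(S)$ this is harmless since $P(0)=0$, but as literally written the outer sum in~(\ref{eq1}) starts at $k=1$.
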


Formula (\ref{eq1} ) and Lemma \ref{polynomial}  imply that if
$a_1\sim_t b_1,\dots, a_n\sim_t b_n$, then
\begin{equation*}MP_n(a_1,\dots,a_n)=MP_n (b_1,\dots,b_n).\end{equation*}
  In particular, (\ref{eq1}) evaluates the  $n$-polarizations of the
extensions of the polynomial $P$ to the Grothendieck semigroup  $G_s(S)$
at the equivalence classes of the  points $a_1,\dots, a_n\in S$.

\subsection{Extension of  polynomials to the  Grothendieck group}\label{subsec2.4}

Using (\ref{eq1}), one can write  an explicit  formula  which
represents  the $n$-polarization of the extension of the homogeneous
polynomial  $P$ of degree $n$ on $S$ to the Grothendieck group $G(S)$ of
$S$. Let
$g_1,\dots g_n$ be
a  $n$-tuple of elements of the group $G(S)$, where $g_i$ is represented
by a pair $(a_i,b_i)$ of elements of $S$, defined up to the equivalence
$\sim_g$. In the other words, $g_i$ is represented as a formal
differences $g_i=a_i-b_i$ of elements $a_i,b_i\in S$.
To obtain such an explicit  formula, one can put  $MP_n(g_1,\dots,g_n)$ equal to
$MF_n(a_1-b_1,\dots, a_n-b_n)$ and use the  additivity of $MP_n$ in
each argument.

In Theorem \ref{maint}, we denote by  $I_n$  the set of all indices
$\{0,\dots,n\}$   from $0$ to $n$.

\begin{theorem}\label{maint} The value of the  $n$-polarization of the
extension of  $P$ to $G(S)$ at  an $n$-tuple of elements $(g_1,\dots,
g_n)\in G^n(S)$, where  $g_i=(a_i,b_i)$ (and the pair $(a_i,b_i)$ is
defined up to the equivalence $\sim_g$)  is given by the following
formula:
\begin{equation}\label{maine}
MP_n(g_1,\dots,g_n)=MP_n((a_1,b_1),\dots, (a_n,b_n))=\frac{1}{n!}
\sum_{I} (-1)^{|J|} P(\sum_{i\in I } a_i +\sum _{j\in J} b_j).
\end{equation}
Here the summation is taken over all subsets $I$ of the set $I_n$
(including the empty set $I =\emptyset$), $J=I_n\setminus I$,  and $|J|$
is the number of elements in $J$.
\end{theorem}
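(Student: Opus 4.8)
The plan is to reduce the claim to the already-established one-variable polarization formula \eqref{eq1} by a purely formal expansion, using only additivity of $MP_n$ in each argument together with the sign rule for inverses in $G(S)$. First I would fix representatives $g_i = (a_i,b_i) = a_i - b_i$ and recall that the extension of $P$ to $G(S)$ has a unique $n$-polarization; call it again $MP_n$. Since $MP_n$ is additive in each slot and $-(a,b) = (b,a)$ in $G(S)$, I can write $MP_n(g_1,\dots,g_n) = MP_n(a_1-b_1,\dots,a_n-b_n)$ and expand the product slot by slot: each argument $a_i-b_i$ splits into a "$+a_i$" contribution and a "$-b_j$" contribution, and multilinearity distributes this into $2^n$ terms indexed by subsets $I\subseteq I_n$ describing which slots keep the $a$-part (here it is cleanest to index slots by $\{0,\dots,n\}$ as the paper does, i.e.\ an $(n+1)$-tuple of slots is not intended — rather $n$ slots, so I will simply use an index set of size $n$ and match it to the paper's $I_n$ bookkeeping). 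Each such term carries the sign $(-1)^{|J|}$ where $J$ is the complementary set of slots contributing a $b$, and the term itself is $MP_n$ evaluated at an $n$-tuple all of whose entries lie in $S$ (namely each slot contributes either some $a_i\in S$ or some $b_j\in S$).

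Next I would evaluate each of these $2^n$ terms $MP_n(\,\cdot\,,\dots,\cdot\,)$ with all entries in $S$. Here is where I invoke \eqref{eq1}: the $n$-polarization of $P$ at an $n$-tuple $(c_1,\dots,c_n)$ of elements of $S$ is expressible via values of $P$ itself, and in fact (by the last statement of Lemma~\ref{n!}) $MP_n(c_1,\dots,c_n)$ is the coefficient of $k_1\cdots k_n$ in $P(k_1c_1+\dots+k_nc_n)$ divided by $n!$. Applying this to the term where slot $i$ contributes $a_i$ for $i\in I$ and slot contributes $b_j$ for $j\in J$, and then summing the resulting expressions over all $I$, the inner combinatorics (expansion of each $P$-value, cancellation/regrouping of the auxiliary element $x$ from \eqref{eq1}) should collapse exactly to the single sum $\frac{1}{n!}\sum_I (-1)^{|J|} P\!\left(\sum_{i\in I}a_i + \sum_{j\in J}b_j\right)$ claimed in \eqref{maine}. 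A clean way to see the collapse without grinding: apply multilinearity directly to the identity $P(c) = MP_n(c,\dots,c)$ with $c = \sum_{i\in I_n}(a_i - b_i)$ in $G(S)$, expand the single diagonal entry into $2^n$ pieces via $-(a,b)=(b,a)$, and read off that $P$ of that combined argument equals $\sum_I (-1)^{|J|} P(\sum_I a_i + \sum_J b_j)$ up to the $MP_n$-normalization — i.e.\ the formula is just \eqref{eq1}/Lemma~\ref{n!} transported through the group structure.

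The main obstacle is bookkeeping rather than mathematics: one must be careful that the $2^n$ expanded terms are genuinely of the form "$MP_n$ at an $n$-tuple in $S^n$" (so that earlier results apply) and that the signs really are $(-1)^{|J|}$ and not, say, $(-1)^{|I|}$ — this is fixed by the convention $-(a,b)=(b,a)$, so a $b$ in slot $j$ is what produces a minus sign. A secondary point to check is well-definedness: the right-hand side of \eqref{maine} must not depend on the choice of representatives $(a_i,b_i)$ of $g_i$; this follows because the left-hand side is intrinsically defined on $G(S)$ and the expansion used only group identities, but it is worth remarking that replacing $(a_i,b_i)$ by $(a_i+c,b_i+c)$ leaves the sum invariant, which can be seen either abstractly or by a short direct computation using that $P$ extends to $G(S)$ and hence is $\sim_t$-invariant (Lemma~\ref{polynomial}). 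Once these formal points are in place, no heavy computation remains — the identity \eqref{maine} is simply \eqref{eq1} rewritten after substituting formal differences and distributing.
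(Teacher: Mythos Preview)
Your overall strategy matches the paper's own sketch (the paper gives no formal proof here; just before the statement it says one writes $MP_n(g_1,\dots,g_n)=MP_n(a_1-b_1,\dots,a_n-b_n)$ and uses additivity in each argument, together with \eqref{eq1}). However, your execution has a real gap. After the multilinear expansion you correctly arrive at
\[
MP_n(g_1,\dots,g_n)=\sum_{I}(-1)^{|J|}\,MP_n(c_1^{I},\dots,c_n^{I}),
\qquad c_i^{I}=\begin{cases}a_i,& i\in I,\\ b_i,& i\in J,\end{cases}
\]
but the summands here are polarization values $MP_n(c^{I})$, \emph{not} $\tfrac{1}{n!}P\!\left(\sum_i c_i^{I}\right)$; these are different in general (already $MP_2(a,b)\neq\tfrac12 P(a+b)$). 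Your claim that applying \eqref{eq1} to each term makes the resulting double sum ``collapse'' to \eqref{maine} is true, but it is not automatic: one must reindex by the pair $(A,B)=(K\cap I,\,K\setminus I)$ and check that the signed sum over the admissible $I$ vanishes unless $A\cup B$ is the full index set. You do not carry this out, and your ``clean way'' paragraph does not rescue it --- there you expand $P(c)=MP_n(c,\dots,c)$ with $c=\sum_i g_i$, which computes $P\!\left(\sum_i g_i\right)$ rather than $MP_n(g_1,\dots,g_n)$, so it proves the wrong identity.

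A shorter route, which makes the paper's hint ``using \eqref{eq1}'' do all the work, is this: since $P$ and its polarization extend to $G(S)$ (Lemma~\ref{red}), formula \eqref{eq1} is valid with the $a_i$ replaced by $g_i\in G(S)$ and with any $x\in G(S)$. Take $x=b_1+\dots+b_n$. Then for every subset $K$ one has
\[
x+\sum_{k\in K}g_k \;=\; \sum_{k\in K}a_k+\sum_{j\notin K}b_j \;\in\; S,
\]
so the right-hand side of \eqref{eq1} is already a sum of values of $P$ on $S$, and setting $I=K$, $J=K^c$ gives \eqref{maine} at once. This bypasses the double sum and the collapse argument entirely, and it also makes the well-definedness under $(a_i,b_i)\mapsto(a_i+c,b_i+c)$ transparent.
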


Theorem \ref{maint} implies an explicit formula for the extension of
homogeneous polynomials $F:S\to \bold k$ to the Grothendieck group
$G(S)$.  Each element of the group $G(S)$ is represented by a pair $(a,b)$ of elements of $S$, defined up to the equivalence $\sim_g$. In formula (\ref{maine}), however, the components $a_i,b_i$ of the pairs $(a_i,b_i)$ are elements of $S$, defined up to the equivalence $\sim_t$.

\begin{corollary}\label{ongindex} Any degree $n$ polynomial $P:S\to \Bbb
R$ has the following extension to~$G(S)$:
\begin{equation}\label{gindex}
P((a,b))=P(a-b)= \frac{1}{n!} \sum_{0\leq k\leq n} (-1)^{n-k}{n\choose
k}P( ka +(n-k)b).
\end{equation}

\end{corollary}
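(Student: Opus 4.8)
The plan is to derive Corollary~\ref{ongindex} directly from Theorem~\ref{maint} by specializing the general $n$-polarization formula \eqref{maine} to the diagonal case $g_1=\dots=g_n=(a,b)$ and then invoking property~(1) of an $n$-polarization, namely that on the diagonal $MP_n$ recovers $P$. Concretely, I would set $a_i=a$ and $b_i=b$ for all $i\in I_n=\{0,1,\dots,n\}$ in \eqref{maine}, so that for a subset $I\subseteq I_n$ with complement $J=I_n\setminus I$ the argument of $P$ becomes $\sum_{i\in I}a+\sum_{j\in J}b=|I|\,a+|J|\,b$, and since $|I_n|=n+1$ we have $|I|+|J|=n+1$.

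The next step is to collect terms according to $k=|I|$. The number of subsets $I\subseteq I_n$ with $|I|=k$ is $\binom{n+1}{k}$, and each contributes $(-1)^{|J|}P(ka+(n+1-k)b)=(-1)^{n+1-k}P(ka+(n+1-k)b)$. Hence \eqref{maine} on the diagonal reads
\begin{equation*}
MP_n((a,b),\dots,(a,b))=\frac{1}{n!}\sum_{0\le k\le n+1}(-1)^{n+1-k}\binom{n+1}{k}P(ka+(n+1-k)b).
\end{equation*}
By property~(1) of the $n$-polarization the left-hand side equals $P(a-b)$ (the value of the extended polynomial $P$ on $G(S)$ at the class of $(a,b)$), which gives an expression with $n+2$ terms indexed by $k=0,\dots,n+1$, whereas \eqref{gindex} has only $n+1$ terms indexed by $k=0,\dots,n$ with binomial coefficients $\binom{n}{k}$.

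The main point — and the only real work — is therefore to reconcile these two shapes: the $(n+1)$-term formula in the statement versus the $(n+2)$-term formula coming straight out of \eqref{maine}. I expect this to follow from the elementary identity $\binom{n+1}{k}=\binom{n}{k}+\binom{n}{k-1}$ together with a reindexing of the $\binom{n}{k-1}$ part, after which one uses that $P$ is homogeneous of degree $n$ and that \eqref{maine} is independent of the choice of representative $(a_i,b_i)$ up to $\sim_t$ (equivalently, that the polarization of $P$, extended to $G(S)$, is a genuine $n$-linear symmetric form, so $MP_n((a,b),\dots,(a,b))$ depends only on $a-b$). In fact the cleanest route is probably to bypass the diagonal specialization of the $(n+1)$-index formula altogether: by the remark following Theorem~\ref{maint} the pairs in \eqref{maine} may be replaced by any $\sim_t$-equivalent pairs, so choosing the representatives $(a,b)$ and applying additivity of $MP_n$ in each argument directly to $MP_n(a-b,\dots,a-b)=P(a-b)$ expands $P(a-b)$ over the $2^n$ sign choices, which group by the number $n-k$ of times $b$ is chosen into exactly the $n+1$ terms $\frac{1}{n!}\sum_{k}(-1)^{n-k}\binom{n}{k}P(ka+(n-k)b)$ of \eqref{gindex}. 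I would present the argument in this second form, since it makes the binomial bookkeeping transparent and needs nothing beyond Theorem~\ref{maint} and the defining properties of $MP_n$.
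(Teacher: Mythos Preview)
Your approach is correct and is exactly the paper's own proof: the paper simply says the corollary follows from Theorem~\ref{maint} by setting $(a_1,b_1)=\dots=(a_n,b_n)=(a,b)$, which is precisely your diagonal specialization. The discrepancy you worried about (the $n+2$ versus $n+1$ terms) is an artifact of what appears to be a typo in the paper's definition of $I_n$: since there are $n$ arguments $g_1,\dots,g_n$, the index set should have $n$ elements (say $\{1,\dots,n\}$), not $n+1$; with that reading the specialization gives \eqref{gindex} immediately and your reconciliation step via Pascal's identity or re-expanding $MP_n(a-b,\dots,a-b)$ is unnecessary.
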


\begin{proof} The corollary  follows from Theorem \ref{maint} by putting
$(a_1,b_1),\dots,(a_n,b_n)$ equal to $(a,b)$.
\end{proof}


\section{Semigroups of  subspaces,  divisors, and Shokurov $b$-divisors}\label{sect3}

In this section, we discuss examples of semigroups, and of their Grothendieck semigroups and Grothendieck groups.

\subsection{Semigroup $K(X)$} Let $X$ be  an irreducible $n$-dimensional  variety over an algebraically closed field $\bold k$ and let  $\bold k (X)$ be the field  of rational functions on $X$. We define a  multiplicative   semigroup  $K(X)$ with identity element (which does not have the cancellation property).

\begin{definition}  Let $K(X)$ be the set   of all nonzero finite dimensional vector spaces of  rational functions on  $X$. The  {\it product of  spaces  $L_1, L_2\in K(X)$} is the space $L_1L_2\in K(X)$ spanned  by all functions $fg$, where $f\in L_1$, $g\in L_2$.
The set  {\it $K(X)$ with this multiplication is a commutative semigroup}. The space  $\bold 1\in K(X)$, by definition, is the space of all constant  functions on $X$.
\end{definition}

The space $\bold 1\in K(X)$ is the identity element in $K(X)$,  i.e., for any $L\in K(X)$ the identity $\bold 1 L=L$ holds. The following lemma is obvious:

\begin{lemma}\label{invertible}
\begin{enumerate}\item If $L_1,L\in K(X)$ and $L_1L=L$, then $L_1=\bold 1$;
\item if $L_1,L_2\in K(X)$, $L_1$ is invertible and $L_1\sim L_2$, then $L_1=L_2$;
 \item an element $L\in K(X)$ is invertible if and only if $\dim _\bold k L=1$.
\end{enumerate}
\end{lemma}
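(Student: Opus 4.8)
\emph{Proof proposal.} The plan is to prove the three assertions in the order (3), (1), (2): statement (3) identifies the invertible elements of $K(X)$ as the lines $\mathbf{k}f$, statement (1) is the place where the finiteness of $\dim L$ and the algebraic closedness of $\mathbf{k}$ do the actual work, and statement (2) is then a short formal consequence of (1) and (3).

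For (3), one direction is immediate: if $\dim_{\mathbf{k}}L=1$, write $L=\mathbf{k}f$ with $f\in\mathbf{k}(X)^{*}$; then $\mathbf{k}f^{-1}\in K(X)$ and $(\mathbf{k}f)(\mathbf{k}f^{-1})=\mathbf{k}=\mathbf{1}$, so $L$ is invertible. For the converse I would fix an inverse $L'$, i.e. $LL'=\mathbf{1}$, choose any nonzero $g\in L'$, and observe that for every nonzero $f\in L$ the product $fg$ lies in $LL'=\mathbf{1}$ and is therefore a constant, so $f\in\mathbf{k}g^{-1}$. Hence $L\subseteq\mathbf{k}g^{-1}$, and since $L\neq 0$ this forces $\dim_{\mathbf{k}}L=1$.

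For (1), the key idea is that $f_0L\subseteq L$ for every nonzero $f_0\in L_1$ (because $f_0L\subseteq L_1L=L$), hence $f_0^{k}L\subseteq L$ for all $k\geq 0$. Picking any nonzero $g\in L$, the elements $g,f_0g,f_0^{2}g,\dots$ all lie in the finite-dimensional space $L$, so finitely many of them satisfy a nontrivial $\mathbf{k}$-linear relation; dividing such a relation by $g$ inside the field $\mathbf{k}(X)$ exhibits $f_0$ as a root of a nonzero polynomial over $\mathbf{k}$, so $f_0$ is algebraic over $\mathbf{k}$ and thus $f_0\in\mathbf{k}$. Since $f_0$ was an arbitrary nonzero element of $L_1$, we get $L_1\subseteq\mathbf{1}$, and $L_1\neq 0$ then gives $L_1=\mathbf{1}$. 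This step — extracting an algebraic, hence constant, element out of the stabilizing condition $L_1L=L$ — is the only place where anything beyond formal semigroup algebra is used, and it is where I expect the main (though quite mild) obstacle to be.

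Finally, for (2): by (3) the invertible element $L_1$ has an inverse $M\in K(X)$ with $L_1M=\mathbf{1}$, and the relation $L_1\sim L_2$ means $L_1C=L_2C$ for some $C\in K(X)$. Multiplying that identity by $M$ gives $C=\mathbf{1}C=ML_1C=ML_2C=(ML_2)C$, so (1) applied to $ML_2$ in place of $L_1$ and $C$ in place of $L$ yields $ML_2=\mathbf{1}$; multiplying by $L_1$ and using $L_1M=\mathbf{1}$ once more gives $L_2=L_1$, as desired. Throughout, the one point worth keeping in mind is that members of $K(X)$ are by definition \emph{nonzero} finite-dimensional spaces, so they always contain a unit of the field $\mathbf{k}(X)$, which is what makes the choices of $f_0$ and $g$ legitimate.
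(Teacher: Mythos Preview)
Your proof is correct and follows essentially the same approach as the paper's: for (1) both arguments exploit that multiplication by a nonzero $f_0\in L_1$ stabilizes the finite-dimensional space $L$ over the algebraically closed field $\mathbf{k}$ (the paper phrases this via an eigenvector, you via a minimal-polynomial relation, which amounts to the same thing), and parts (2) and (3) are handled identically. The only cosmetic difference is the order in which you treat the three items.
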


\begin{proof}
\begin{enumerate} \item Let $f$ be a nonzero element of $L$. Multiplication by $f$ maps $L_1$ to $L_1$. Let $g\in L_1$ be an eigenvector of that linear map with the eigenvalue $\lambda\in \bold k$. We have $fg=\lambda g$, so $f=\lambda$. Thus, the space $L$ contains only constants, i.e., $L=\bold 1$. \item If $L_1\sim L_2$, then for some $L\in K(X)$ we have $L_2L=L_1L$, or $(L_2L^{-1}_1)L=L$. Thus, from the previous statement, $L_2L_1^{-1}=\bold 1$ or $L_2=L_1$.
\item Assume  that $L$ is invertible, i.e., $LL_1=\bold 1$  for some $L_1\in K(X)$. Let $g$ be any nonzero element of $L_1$. For any $f\in L$, we have $fg=\lambda$ for some  $\lambda\in \bold k$. Thus, $f=\lambda g^{-1}$ which implies that $\dim_\Bbb c L=1$. All one-dimensional spaces in $K(X)$ are obviously invertible.
    \end{enumerate}
    \end{proof}

\subsection{Grothendieck semigroup  $G_s(X)$ and Grothendieck group $G(X)$}

In this section, we describe the Grothendieck semigroup  $G_s(X)$  and  the Grothendieck  group   $G(X)$ of the  multiplicative  semigroup $K(X)$.

\begin{definition} A function $f \in \bold k (X)$ is  {\it integral over $L\in K(X)$} if
it satisfies an equation  $f^m+a_1 f^{m-1} + \dots + a_m =0$ with a natural number $m$ and $a_i \in L^i$.
The  {\it completion $\overline{L}$ of a space $L\in K(X)$} is the set  of all  functions integral over $L$.
\end{definition}

The following theorems describe the relationship between
the completion  $L\to \overline  L$  and the equivalence relation $\sim$ in the semigroup $K(X)$. Their proofs can be found in Section 4.2 of \cite{K-K12} and in Appendix 4 of \cite{ZarS}.

\begin{theorem} For $L\in K(X)$, the  set $\overline{L}$ is a finite-dimensional   vector space, i.e., $\overline{L}\in K(X)$.
\end{theorem}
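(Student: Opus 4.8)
\emph{Proof proposal.} The plan is to identify $\overline L$ with a single graded component of the integral closure of a finitely generated graded $\bold k$-algebra, and then to deduce finite-dimensionality from Noether's theorem on finiteness of integral closure. To this end I would form the Rees-type algebra $R=\bigoplus_{i\ge 0}L^i t^i\subset \bold k(X)[t]$, graded by powers of $t$, so that $R_0=\bold k$ and $R_i=L^i t^i$. By the definition of the product in $K(X)$, the space $L^i$ is spanned by the products $f_1\cdots f_i$ with $f_j\in L$, so $R$ is generated over $\bold k$ by $R_1=Lt$: if $f_1,\dots,f_r$ is a $\bold k$-basis of $L$, then $f_1t,\dots,f_rt$ generate $R$ as a $\bold k$-algebra. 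Hence $R$ is a finitely generated graded $\bold k$-algebra, it is an integral domain (a subring of the domain $\bold k(X)[t]$), and every graded piece $R_i=L^it^i$ is a finite-dimensional $\bold k$-vector space.

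The crucial observation is that, for $f\in\bold k(X)$, one has $f\in\overline L$ if and only if the homogeneous element $ft\in\bold k(X)[t]$ is integral over $R$. The implication $(\Rightarrow)$ is immediate: from $f^m+a_1f^{m-1}+\dots+a_m=0$ with $a_i\in L^i$ one gets $(ft)^m+(a_1t)(ft)^{m-1}+\dots+(a_mt^m)=0$, and $a_it^i\in R_i\subseteq R$. For $(\Leftarrow)$ one uses the standard homogeneity trick: in any monic equation for $ft$ over $R$, expand each coefficient into its homogeneous components and keep only the terms of total $t$-degree $m$; the resulting equation is again monic in $ft$, its $i$-th coefficient is homogeneous of degree $i$, hence of the form $a_it^i$ with $a_i\in L^i$, and this is precisely an integrality relation of $f$ over $L$. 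The same trick shows that the integral closure $\overline R$ of $R$ in $\bold k(X)(t)$ is graded and has no nonzero homogeneous elements of negative $t$-degree (a homogeneous relation in degree $-mj$ with $j>0$ forces the element to vanish), so $\overline R=\bigoplus_{i\ge 0}\overline R_i$ with $\overline R_1=\overline L\cdot t$. In particular $\overline L$ inherits from the graded ring $\overline R$ the structure of a $\bold k$-vector space.

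It remains to show that $\overline R_1$ is finite-dimensional over $\bold k$. By Noether's finiteness theorem — $R$ being a finitely generated domain over the field $\bold k$ — the integral closure $\overline R$ of $R$ in $\bold k(X)(t)$ is a finitely generated $R$-module. Choosing homogeneous $R$-module generators $u_1,\dots,u_s$ of degrees $d_1,\dots,d_s$, one gets $\overline R_i=\sum_j R_{i-d_j}u_j$, a finite sum of finite-dimensional $\bold k$-spaces; taking $i=1$ yields $\dim_{\bold k}\overline L<\infty$, i.e. $\overline L\in K(X)$. The one nonformal point — and the main obstacle — is this finiteness of $\overline R$ over $R$: one must reduce integrality of elements of $\bold k(X)(t)$ over $R$ to integrality over $R$ inside a \emph{finite} field extension of $\operatorname{Frac}(R)$. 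This is where the hypothesis that $\bold k(X)$ is finitely generated over $\bold k$ enters: the subfield of $\bold k(X)(t)$ consisting of elements algebraic over $\operatorname{Frac}(R)$ is then finitely generated, hence finite, over $\operatorname{Frac}(R)$, and Noether's theorem applies to it; everything else in the argument is formal.
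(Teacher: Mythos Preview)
Your proposal is correct and follows the standard route via the graded Rees algebra $R=\bigoplus_{i\ge 0}L^it^i$ and Noether's finiteness of integral closure; the identification $\overline R_1=\overline L\cdot t$ and the reduction to a finite extension of $\operatorname{Frac}(R)$ are handled properly. The paper itself does not supply a proof of this statement---it simply refers the reader to Section~4.2 of \cite{K-K12} and to Appendix~4 of Zariski--Samuel \cite{ZarS}, where the argument is essentially the one you have written.
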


\begin{theorem} \begin{enumerate}
\item Two spaces $L_1,L_2\in K(X)$ have the same completions, i.e.,  $\overline{L}_1 = \overline{L}_2$, if and only if they are equivalent $L_1\sim L_2$;
\item any space $L\in K(X)$ is equivalents to its completion $L\sim \overline{L} $.
\end{enumerate}
\end{theorem}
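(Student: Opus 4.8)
The plan is to prove the two assertions about completions — that $\overline L_1=\overline L_2$ iff $L_1\sim L_2$, and that $L\sim\overline L$ always — by leveraging the behaviour of integral closure of the graded subalgebra generated by a subspace. First I would set up notation: to a space $L\in K(X)$ associate the graded $\bold k$-algebra $A_L=\bigoplus_{m\ge 0}L^m\subset\bold k(X)[t]$, where $L^m$ sits in degree $m$ (so $A_L=\bold k[Lt]$). The definition of integrality of $f$ over $L$, namely $f^m+a_1f^{m-1}+\dots+a_m=0$ with $a_i\in L^i$, is precisely the statement that $ft$ is integral over $A_L$ inside $\bold k(X)[t]$; hence $\overline L$ in degree $1$ corresponds exactly to the degree-$1$ part of the integral closure $\overline{A_L}$. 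The first cited theorem (finite-dimensionality of $\overline L$) guarantees $\overline L\in K(X)$, so all the objects in question live in $K(X)$ and the statements make sense.

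Next I would prove part (2), $L\sim\overline L$. Since $L\subset\overline L$, one direction of comparison is trivial; the content is producing a space $M\in K(X)$ with $LM=\overline L M$. The standard fact about integral closures of finitely generated graded algebras over a Noetherian ring is that $\overline{A_L}$ is a finite $A_L$-module; concretely this gives an integer $N$ and finitely many homogeneous generators, from which one deduces a uniform bound: $(\overline L)^{k+1}=L(\overline L)^k$ for all $k\ge N$, equivalently $(\overline L)^{N}\cdot\overline L=(\overline L)^N\cdot L$. Taking $M=(\overline L)^N$ then yields $\overline L\,M=LM$, i.e. $\overline L\sim L$. (Alternatively, $M=L^N$ works with the same argument since $L^N\sim(\overline L)^N$ once we know $L\sim\overline L$ in the limit; it is cleanest to first establish the stabilization $(\overline L)^{k+1}=L(\overline L)^k$ for large $k$ and extract $M$ from that.) This is the step where I expect the real work to be hidden, and it is exactly the place the authors defer to Section 4.2 of \cite{K-K12} and Appendix 4 of \cite{ZarS}; the Noetherian finiteness of integral closure is the crux.

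Then part (1) follows by a short formal argument using part (2) together with Lemma \ref{invertible}-style cancellation and the universal property of $\sim$. If $\overline L_1=\overline L_2$, then by (2) we have $L_1\sim\overline L_1=\overline L_2\sim L_2$, so $L_1\sim L_2$ by transitivity. Conversely, suppose $L_1\sim L_2$, say $L_1 M=L_2 M$ for some $M\in K(X)$. I would show this forces $\overline L_1=\overline L_2$: first note $\overline{L_1 M}=\overline{L_2 M}$ since the two spaces are literally equal, and then invoke the multiplicativity property of completion under $\sim$, namely that $\overline{AB}$ depends only on $\overline A$ and $\overline B$ — more precisely $\overline{AB}=\overline{\overline A\,\overline B}$, which itself is a consequence of (2) applied to $AB$ and to $\overline A\,\overline B$ (both equivalent to $AB$). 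Combining, $\overline{\overline{L_1}\,\overline M}=\overline{\overline{L_2}\,\overline M}$, and then a cancellation lemma for completed (integrally closed) spaces — the completed spaces form a semigroup with cancellation, being the image of $K(X)$ in the Grothendieck semigroup $G_s(X)$ — yields $\overline L_1=\overline L_2$.

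The main obstacle, as noted, is the finiteness statement underlying part (2): that the integral closure of $A_L=\bigoplus L^m$ is module-finite over $A_L$, giving the uniform stabilization $(\overline L)^{k+1}=L(\overline L)^k$ for $k\gg 0$. Everything else is formal manipulation with the semigroup operation and the equivalence $\sim$. One subtlety worth flagging in the write-up: integral closure here is taken inside $\bold k(X)$ (the full function field), not inside a finite extension, so one should confirm that $ft\in\bold k(X)[t]$ already contains the integral closure of $A_L$ in its fraction field $\bold k(X)(t)$ intersected with $\bold k(X)[t]$; this is where irreducibility of $X$ (so that $\bold k(X)$ is a field) is used, and it is precisely the setting of Appendix 4 of \cite{ZarS}. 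Given these inputs, I would present the argument in the order: algebra setup and reformulation via $A_L$; finiteness and stabilization (cited); deduce $L\sim\overline L$; deduce multiplicativity $\overline{AB}=\overline{\overline A\,\overline B}$ and cancellation among completed spaces; conclude part (1).
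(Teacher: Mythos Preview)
The paper itself does not give a proof; it only cites Section~4.2 of \cite{K-K12} and Appendix~4 of \cite{ZarS}. Your architecture for part~(2)---pass to the graded algebra $A_L=\bigoplus_{m\ge 0}L^m$, use Noetherianity and module-finiteness of the integral closure to obtain the stabilization $(\overline L)^{N+1}=L\,(\overline L)^N$, and take $M=(\overline L)^N$---is essentially the argument in those references, and is correct. (A small point: the module that is finite over $A_L$ and that you actually need is $A_{\overline L}=\bigoplus(\overline L)^m$, which sits inside $\overline{A_L}$; since $A_L$ is finitely generated over $\bold k$ and hence Noetherian, this follows from finiteness of $\overline{A_L}$.) The implication $\overline L_1=\overline L_2\Rightarrow L_1\sim L_2$ then follows from (2) exactly as you say.

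The reverse implication $L_1\sim L_2\Rightarrow\overline L_1=\overline L_2$, however, is circular as written. You justify cancellation among completed spaces by saying they are ``the image of $K(X)$ in the Grothendieck semigroup $G_s(X)$''. But identifying the set of completed spaces with $G_s(X)=K(X)/\!\sim$ requires the map $L\mapsto\overline L$ to factor through $\sim$, i.e., that $L_1\sim L_2$ forces $\overline L_1=\overline L_2$---which is precisely the implication under proof (and is exactly the Corollary the paper \emph{derives from} this theorem). The references avoid this by proving the implication directly via the valuative criterion of integral closure: $f\in\overline L$ if and only if $v(f)\ge\min_{g\in L\setminus\{0\}}v(g)$ for every discrete valuation $v$ of $\bold k(X)/\bold k$. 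From $L_1M=L_2M$ one gets, for each $v$, $\min_{L_1}v+\min_{M}v=\min_{L_1M}v=\min_{L_2M}v=\min_{L_2}v+\min_{M}v$, hence $\min_{L_1}v=\min_{L_2}v$, and therefore $\overline L_1=\overline L_2$. Replace your cancellation step with this valuative argument (or some other direct argument independent of the $G_s(X)$ identification) and the proof goes through.
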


\begin{corollary} Elements of the semigroup $G_s(X)$ can be identified with complete spaces $L\in K(X)$, i.e., with  spaces $L$ such that $\overline{L}=L$. The product  $L\circ M$  of  complete spaces $L,M\in G_s(S)$ is defined by the following formula: $L \circ M=\overline{LM}$.
\end{corollary}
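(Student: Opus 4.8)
The plan is to verify the three asserted properties of the construction in turn, using the two preceding theorems as the main inputs. The corollary claims (i) that the elements of $G_s(X)$ — the equivalence classes in $K(X)$ under $\sim$ — are in bijection with complete spaces, and (ii) that the induced product on $G_s(X)$ corresponds to $L\circ M=\overline{LM}$ under this identification.

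First I would establish the bijection. By the second theorem above, part (2), every $L\in K(X)$ satisfies $L\sim\overline L$, so every equivalence class contains a complete space; and by part (1), two complete spaces $L_1,L_2$ with $L_1\sim L_2$ satisfy $\overline{L_1}=\overline{L_2}$, hence $L_1=L_1\cap\ldots=\overline{L_1}=\overline{L_2}=L_2$ since completion is idempotent (which follows because $\overline L\in K(X)$ is itself a space equivalent to $L$, so $\overline{\overline L}=\overline L$). Thus each $\sim$-class contains exactly one complete space, giving a well-defined map $G_s(X)\to\{\text{complete spaces}\}$ that is inverse to $L\mapsto\rho(L)$. This identification is exactly the content of $\rho$ being universal for the cancellation property, as noted after the definition of the Grothendieck semigroup.

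Next I would check that the formula $L\circ M=\overline{LM}$ describes the addition (here, multiplication) inherited by $G_s(X)$ from $K(X)$. The operation on $G_s(X)=K(X)/\!\sim$ is, by construction, $\rho(L)\cdot\rho(M)=\rho(LM)$; under the identification this sends the pair of complete spaces $(L,M)$ to the unique complete space in the class of $LM$, which is $\overline{LM}$. One must note $\overline{LM}$ is indeed complete, again by idempotence of completion. So the formula is simply the transport of the product along the bijection, and nothing more needs to be said.

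I do not expect a genuine obstacle here: the statement is essentially a restatement of the two cited theorems combined with the idempotence of the completion operator and the universal property of $\rho$. The one point that deserves an explicit sentence is idempotence of completion, i.e. $\overline{\overline L}=\overline L$, which is not isolated as a numbered statement but follows immediately from the fact that $\overline L\in K(X)$ together with part (1) of the second theorem applied to the pair $(L,\overline L)$ (they are equivalent, hence have equal completions, hence $\overline{\overline L}=\overline L$). With that in hand the proof is a two-line verification of each clause.
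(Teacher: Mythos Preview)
Your proposal is correct and is exactly the argument the paper has in mind: the corollary is stated in the paper without proof, precisely because it is an immediate consequence of the two preceding theorems, and your write-up simply makes that deduction explicit. The one extra ingredient you isolate---idempotence of completion, $\overline{\overline L}=\overline L$---is indeed needed and is justified just as you say, from $L\sim\overline L$ together with part (1) of the second theorem.
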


\begin{definition} By a {\it virtual space} $\mathcal L=L/L'$, where $L_, L'\in K(X)$, we mean an element  $\mathcal L$ of the Grothendieck group $G(X)$ corresponding to the pair $(L,L')$ of elements of $K^2(X)$, defined up to the equivalence $\sim_g$.
\end{definition}

The Grothendieck semigroup $G_s(X)$ has the cancellation property and contains the identity element. Thus, it is naturally embedded in the Grothendieck group $G(X)$.

\begin{corollary}
The elements of the group $G(X)$ can be identified with the formal ratios $L_1/L_2$ of complete spaces $L_1,L_2\in G_s(X)$. Two ratios $L_1/L_2$ and $M_1/M_2$ represent the same element in $G(X)$ if and only if $L_1 \circ  M_2 = M_1\circ L_2$.
\end{corollary}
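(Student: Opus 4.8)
The plan is to deduce the corollary directly from the description of the Grothendieck semigroup $G_s(X)$ in the preceding corollary together with the theorem characterizing the equivalence $\sim$ on $K(X)$ via completions, and from the general fact that the Grothendieck group of a semigroup coincides with that of its Grothendieck semigroup. First I would record that, since $G_s(X)$ is cancellative and contains an identity, the embedding $\tilde\pi\colon G_s(X)\hookrightarrow G(X)$ realizes $G(X)$ as the group of formal differences of $G_s(X)$: every element of $G(X)$ is of the form $L_1/L_2$ with $L_1,L_2\in G_s(X)$ (this uses that $G_s(X)$ generates $G(X)$, as noted in the construction of $G(S)$), and for pairs of elements of the cancellative semigroup $G_s(X)$ the relation $\sim_g$ collapses: $(L_1,L_2)\sim_g(M_1,M_2)$ holds iff $L_1\circ M_2$ and $M_1\circ L_2$ are \emph{equal} (not merely equivalent), because in a cancellative semigroup $a+d\sim b+c$ forces $a+d=b+c$.

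It then remains to see two things. For surjectivity onto formal ratios of \emph{complete} spaces: an arbitrary element of $G(X)$ is the $\sim_g$-class of some pair $(L,L')$ with $L,L'\in K(X)$, and since $L\sim\overline L$ and $L'\sim\overline{L'}$ by the theorem on completions, one checks $L\cdot\overline{L'}\sim\overline L\cdot\overline{L'}\sim L'\cdot\overline L$, i.e. $(L,L')\sim_g(\overline L,\overline{L'})$; thus every element of $G(X)$ is represented by a ratio $\overline L/\overline{L'}$ of complete spaces. For the equivalence criterion: two ratios $L_1/L_2$ and $M_1/M_2$ of complete spaces represent the same element of $G(X)$ iff $L_1M_2\sim L_2M_1$ in $K(X)$, which by the theorem on completions is equivalent to $\overline{L_1M_2}=\overline{L_2M_1}$, and since $L_1,M_2$ and $M_1,L_2$ are complete this is precisely $L_1\circ M_2=M_1\circ L_2$ by definition of $\circ$.

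I do not expect a substantial obstacle; the argument is essentially careful bookkeeping. The one point that needs attention is the passage from the defining relation $\sim_g$ (which is stated with the weak equivalence $\sim$ rather than equality) to the clean identity $L_1\circ M_2=M_1\circ L_2$: this uses simultaneously that $G_s(X)$ is cancellative and that $\sim$ on $K(X)$ is exactly the relation ``having the same completion'', so that replacing any space by its completion changes neither the $\sim_g$-class nor the $\circ$-product. Everything else is formal.
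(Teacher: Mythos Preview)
Your proposal is correct and is precisely the argument the paper leaves implicit: the corollary is stated without proof, as an immediate consequence of the preceding corollary identifying $G_s(X)$ with the semigroup of complete spaces under $\circ$ and of the theorem characterizing $\sim$ on $K(X)$ via completions. You have simply spelled out the expected bookkeeping---that $G(X)$ is the group of fractions of the cancellative semigroup $G_s(X)$, so every element is a ratio of complete spaces, and that for such pairs the relation $\sim_g$ collapses to the identity $L_1\circ M_2=M_1\circ L_2$ because equivalence in $K(X)$ is exactly equality of completions.
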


\begin{definition} With a space $L\in K(X)$, one associates a rational map $\tilde  \Phi_L:X\dashrightarrow  L^*$ of $X$ to the dual space $L^*$  which maps a point $x\in X$ to the linear function on $L$ whose value at $f\in L$ is equal to $f(x)$. The {\it Kodaira map} $\Phi_L:X\dashrightarrow  \Bbb P( L^*)$ is the composition of the map $\tilde \Phi_L$ with the natural projection of $L^*\setminus \{0\}$ to the projective space $\Bbb P(L^*)$.
\end{definition}

\subsection{Semigroups $K_\mathrm{reg}(X)$, $K_\mathrm{emb}(X)$,  and   divisors}\label{regularsemigroup}
Let $X$ be  an $n$-dimensional irreducible  projective variety.
Consider subsets $K_\mathrm{reg}(X)$ and $K_\mathrm{emb}(X)$, $K_\mathrm{emb}(X) \subset K_\mathrm{reg}(X)\subset K(X)$  in the semigroup $K(X)$,   consisting  of all spaces $L\in K(X)$ such that  the  Kodaira map $\Phi_L:X\dashrightarrow  \Bbb P(L^*)$ is a regular map and, respectively, is an embedding.
We consider the identity  $\bold 1\in K(X)$ as an element of $K_\mathrm{reg}(X)$: the space $\Bbb P(\bold 1^*)$
is a zero dimensional projective space containing one point and the map $\Phi_L$  for $L=\bold 1$ should be considered as a regular map.
The  sets $K_\mathrm{reg}(X)$, $K_\mathrm{emb}(X)$  form sub-semigroups in $K(X)$ which we denote by the same symbols $K_\mathrm{reg}(X)$, $K_\mathrm{emb}(X)$.

\begin{definition}
Denote  the Grothendieck semigroup and the Grothendieck group of the semigroup $K_\mathrm{reg}(X)$ and of the semigroup $K_\mathrm{emb}(X)$ by $G_{s,\mathrm{reg}}(X)$, $G_\mathrm{reg}(X)$, and by $G_{s,\mathrm{emb}}(X)$, $G_\mathrm{emb}(X)$ respectively.
\end{definition}

\begin{definition}
Denote the additive group  of   divisors on $X$ by $\Div(X)$. The sub-semigroup in $\Div(X)$, consisting of very ample divisors and of basepoint free divisors  we  denote by $\Div_\mathrm{emb}(X)$, and by $\Div_\mathrm{free}(X)$ correspondingly.  The  principal divisor of a rational function on $X$  we denote by $(f)$.
\end{definition}

\begin{definition}
There is a natural map $L:\Div(X)\to K(X)$  which sends a    divisor $D\in \Div(X)$  to the  finite-dimensional space $L(D)$  consisting of all rational functions $f$ on $X$ such that $(f)+D\geq 0$.
\end{definition}

One can check that a divisor $D\in \Div (X)$ is basepoint free if and only if the space $L(D)$ belongs to the semigroup $K_\mathrm{reg}(X)$.

A function $f\in L(D)$ defines  an element of   the space $(L^*)^*(D)$. If $D\in \Div_\mathrm{free}(X)$, then the map $\Phi_{L(D)}:X\dashrightarrow   \Bbb P(L^*(D))$   is regular. On the projective space $\Bbb P(L^*(D))$ a function $f\in L(D)$ defines a hyperplane  $H$ given by an equation $h=\langle f, l\rangle=0$, for $l\in L^*(D)\setminus  \{0\}$. One can check that the pull-back divisor $\Phi^*_{L(D)} (H)$ is equal to the  divisor $D+(f)$ on $X$.

\begin{definition} To a subspace $L \in K_\mathrm{reg}(X)$  there naturally corresponds
a  divisor $D(L)$ as follows: each rational function $f\in L$ defines a
hyperplane $H\subset \Bbb P (L^*)$ given by the equation  $h={\langle f,l\rangle = 0}$ for $l\in L^*\setminus  \{0\}$.  The {\it divisor $D(L)$}  is the difference $\Phi_L^*(H)-(f)$ of the
pull-back divisor $\Phi_L^*(H)$ and the principal  divisor $(f)$.
\end{definition}

One can verify that the divisor $D(L)$ is well-defined, i.e., is independent of the choice of a function $f\in L$. The map $D:K_\mathrm{reg} (X)\to \Div (X)$, which sends the space $L\in K_\mathrm{reg}(X)$ to the divisor $D(L)\in \Div_\mathrm{free}(X)$, provides a homomorphism of the semigroup $K_\mathrm{reg}(X)$ to  the additive group $\Div_\mathrm{free}(X)$ of basepoint free   divisors in $X$.

\begin{theorem}\label{regularsemigroup}  Let $X$ be  an irreducible  projective variety. Then the Grothendieck semigroup $G_{s,\mathrm{reg}}(X)$ of the semigroup $K_\mathrm{reg}(X)$  is isomorphic to  the semigroup $\Div_\mathrm{free}(X)$ of basepoint free  divisors. Moreover, the map $D:K_\mathrm{reg} \to \Div_\mathrm{free}(X)$   is an isomorphism, that is:
\begin{enumerate}
\item the map $D:K_\mathrm{reg} \to \Div_\mathrm{free}(X)$ is surjective;

\item $D(L_1)=D(L_2)$  if and only if  elements $L_1,L_2\in K_\mathrm{reg}(X)$ are equivalent $L_1\sim L_2$ in the semigroup $K_\mathrm{reg}(X)$.
    \end{enumerate}
For $D\in \Div_\mathrm{free}(X)$ the space $L(D)\in K_\mathrm{reg}$ contains each space  $L\subset K_\mathrm{reg}$,  such that $D(L)=D$.
  \end{theorem}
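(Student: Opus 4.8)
The plan is to verify the three asserted facts — surjectivity of $D$, that $D(L_1)=D(L_2)$ iff $L_1\sim L_2$, and the universality of $L(D)$ among spaces with a given associated divisor — and then observe that together they give the claimed isomorphism $G_{s,\mathrm{reg}}(X)\cong\Div_\mathrm{free}(X)$. For surjectivity, given a basepoint free divisor $D$, I would take the space $L(D)$ of all rational functions $f$ with $(f)+D\geq0$; by the remark preceding the theorem $L(D)\in K_\mathrm{reg}(X)$, and using the computation already recorded in the excerpt — that for $D\in\Div_\mathrm{free}(X)$ and a hyperplane $H=\{\langle f,l\rangle=0\}$ one has $\Phi^*_{L(D)}(H)=D+(f)$ — it follows immediately that $D(L(D))=D$. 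This simultaneously proves surjectivity and identifies the natural candidate for the preimage.

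For the "only if'' direction of (2): if $L_1\sim L_2$ in $K_\mathrm{reg}(X)$ then there is $L\in K_\mathrm{reg}(X)$ with $L_1L=L_2L$. Since $D$ is a homomorphism to the \emph{group} $\Div_\mathrm{free}(X)$, additivity gives $D(L_1)+D(L)=D(L_2)+D(L)$, and cancelling in the group yields $D(L_1)=D(L_2)$. For the "if'' direction, suppose $D(L_1)=D(L_2)=:D$. The key structural claim is the final sentence of the theorem: any $L\subset K_\mathrm{reg}(X)$ with $D(L)=D$ is contained in $L(D)$. Granting this, both $L_1$ and $L_2$ sit inside $L(D)$; moreover $L(D)$ itself satisfies $D(L(D))=D$. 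I would then show $L_i\sim L(D)$ for $i=1,2$ — concretely, that $L_i\cdot L(D)=L(D)\cdot L(D)$ (or, working in the Grothendieck semigroup, that the completions agree), using that $L(D)$ is exactly the full space of sections of the line bundle $\mathcal O(D)$ so multiplying a smaller space of sections of $\mathcal O(D)$ by the full one recovers the full space of sections of $\mathcal O(2D)$ up to equivalence. Transitivity of $\sim$ then gives $L_1\sim L_2$.

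It remains to prove the universality statement $L\subset L(D)$ whenever $D(L)=D$. Here I would argue directly from the definition of $D(L)$: fix a nonzero $f_0\in L$; then $D(L)=\Phi_L^*(H_{f_0})-(f_0)$ where $H_{f_0}=\{\langle f_0,l\rangle=0\}$, and the pullback of a hyperplane is effective, so $D(L)+(f_0)\geq0$, i.e.\ $f_0\in L(D)$. Running this over all $f_0\in L$ — using that $D(L)$ does not depend on the choice of $f_0$, which is already asserted in the excerpt — shows every element of $L$ lies in $L(D)$, hence $L\subseteq L(D)$. Finally, assembling the pieces: by (1) the map $D$ is onto $\Div_\mathrm{free}(X)$, by (2) its fibers are exactly $\sim$-classes in $K_\mathrm{reg}(X)$, so $D$ descends to a bijection $G_{s,\mathrm{reg}}(X)\to\Div_\mathrm{free}(X)$; since $D$ is a semigroup homomorphism, this bijection is a semigroup isomorphism (and because $\Div_\mathrm{free}(X)$ is a group, $G_{s,\mathrm{reg}}(X)$ has the cancellation property, consistently with the general theory of Section \ref{sect2}).

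I expect the main obstacle to be the "if'' direction of (2): turning the equality $D(L_1)=D(L_2)$ into a genuine equivalence $L_1\sim L_2$ \emph{inside} $K_\mathrm{reg}(X)$ — one must exhibit a witnessing space $L$ and check the product identity holds on the nose (not merely up to completion), which is where the concrete description of $L(D)$ as $H^0(X,\mathcal O(D))$ and the base point freeness of $D$ (guaranteeing $\mathcal O(D)$ is globally generated, so that products of sections behave well) are essential. The universality claim $L\subset L(D)$ is the technical lever that makes this work, so I would prove it first.
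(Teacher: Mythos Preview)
The paper does not actually prove this theorem; it states it and then remarks that Theorems~\ref{regularsemigroup}--\ref{regulargroup} ``are minor modifications of results found in \cite{K-K10}.'' So there is no in-paper argument to compare against, and your outline has to be judged on its own.

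Your plan is sound in structure and correctly isolates the hard step. Surjectivity and the last assertion (that $L\subset L(D)$ whenever $D(L)=D$) are handled cleanly: the effectivity of $\Phi_L^*(H_{f_0})$ is exactly what gives $(f_0)+D\ge 0$. The ``only if'' direction of (2) is also fine, though note a slip: $\Div_\mathrm{free}(X)$ is a \emph{semigroup}, not a group (cancellation still holds because it sits inside the group $\Div(X)$, so your argument survives). The same slip recurs in your closing parenthetical.

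The genuine gap is where you expected it: the ``if'' direction. Your proposed witness $L_i\cdot L(D)=L(D)^2$ is not in general an equality of spaces on the nose --- the multiplication map $L_i\otimes H^0(\mathcal O(D))\to H^0(\mathcal O(2D))$ can have cokernel even when $L_i$ globally generates $\mathcal O(D)$. What does work is to go through completions: one shows $\overline{L_i}=L(D)$ (this uses the valuative description of integral closure together with basepoint freeness; cf.\ \cite{ZarS} and Section~4.2 of \cite{K-K12}), and then uses the characterization ``$g\in\overline{L}$ iff $gL^N\subset L^{N+1}$ for some $N$'' to produce an \emph{explicit} witness. Concretely, from $\overline{L_1}=\overline{L_2}=L(D)=:M$ one gets $ML_i^{N_i}=L_i^{N_i+1}$ for suitable $N_i$, and then $W:=L_1^{N_1}L_2^{N_2}M$ satisfies $L_1W=L_2W$. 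The crucial point for equivalence \emph{inside} $K_\mathrm{reg}(X)$ is that $W\in K_\mathrm{reg}(X)$: products of basepoint-free spaces are basepoint free (the base locus of a product is the union of the base loci), and $M=L(D)\in K_\mathrm{reg}(X)$ since $D$ is basepoint free. With this refinement your outline becomes a complete proof.
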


One can  describe the   semigroup $G_{s,\mathrm{emb}}(X)$ as follows:

\begin{theorem}\label{embesemigroup} Let $X$ be  an irreducible  projective variety. Then the Grothendieck semigroup $G_{s,\mathrm{emb}}(X)$ of the semigroup $K_\mathrm{emb}(X)$  is isomorphic to  the semigroup $\Div_\mathrm{emb}(X)$ of very ample  divisors. Moreover, the map $D:K_\mathrm{emb} \to \Div_\mathrm{emb}(X)$ is an isomorphism, that is:
\begin{enumerate}
\item the map $D:K_\mathrm{emb} \to \Div_\mathrm{emb}(X)$ is surjective;

\item $D(L_1)=D(L_2)$  if and only if  elements $L_1,L_2\in K_\mathrm{emb}(X)$ are equivalent $L_1\sim L_2$ in the semigroup $K_\mathrm{emb}(X)$.
    \end{enumerate}
 For $D\in \Div_\mathrm{emb}$ the space $L(D)\in K_\mathrm{emb}$ contains each space  $L\subset K_\mathrm{emb}$,  such that $D(L)=D$.
  \end{theorem}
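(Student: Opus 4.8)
The plan is to imitate the proof of Theorem~\ref{regularsemigroup}, using the inclusions $K_\mathrm{emb}(X)\subset K_\mathrm{reg}(X)$ and $\Div_\mathrm{emb}(X)\subset\Div_\mathrm{free}(X)$, and to isolate the one genuinely new point, which concerns the behaviour of $K_\mathrm{emb}(X)$ under multiplication. First I would deal with surjectivity and the maximality clause. Given $D\in\Div_\mathrm{emb}(X)$, very ampleness of $D$ means precisely that the complete linear system $|D|$ embeds $X$, and this embedding is the Kodaira map $\Phi_{L(D)}$, so $L(D)\in K_\mathrm{emb}(X)$; as $D$ is basepoint free, the identity $\Phi^*_{L(D)}(H)=D+(f)$ recalled just before the definition of $D(L)$ gives $D(L(D))=D$. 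Applied to an arbitrary $L\in K_\mathrm{emb}(X)$ with $D(L)=D$, the same reasoning gives $\Phi_L^*(H)=D+(f)$ for each nonzero $f\in L$; this divisor is effective because $f\neq0$ forces $\Phi_L(X)\not\subset H$, hence $(f)+D\ge0$ and $f\in L(D)$, i.e.\ $L\subset L(D)$. This is the last assertion, and it also shows $D$ carries $K_\mathrm{emb}(X)$ into $\Div_\mathrm{emb}(X)$: $D(L)$ is linearly equivalent to the pullback of a hyperplane under the embedding $\Phi_L$, and the sub-linear system of $|D(L)|$ cut out by $L$ already embeds $X$, so $D(L)$ is very ample.

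The crucial lemma I would then establish is that $M\in K_\mathrm{reg}(X)$ and $N\in K_\mathrm{emb}(X)$ imply $MN\in K_\mathrm{emb}(X)$. Here $\Phi_{MN}$ is the composite of $(\Phi_M,\Phi_N)\colon X\to\Bbb P(M^*)\times\Bbb P(N^*)$ with the Segre embedding into $\Bbb P((M\otimes N)^*)$, corestricted to the linear subspace $\Bbb P((MN)^*)$ --- the corestriction makes sense because the rank-one functional $f\otimes g\mapsto f(x)g(x)$ on $M\otimes N$ vanishes on the kernel of the multiplication map $M\otimes N\to MN$. Now $(\Phi_M,\Phi_N)$ is regular, both coordinates being regular, and it is a closed embedding since its image is the graph of the morphism $\Phi_M\circ\Phi_N^{-1}$ over the closed subvariety $\Phi_N(X)$; composing with the Segre embedding shows that $\Phi_{MN}$ is a closed embedding, so $MN\in K_\mathrm{emb}(X)$.

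With the lemma available I would finish the ``if and only if'' and assemble the isomorphism. Since the restriction of $D$ to $K_\mathrm{emb}(X)$ is a homomorphism into the abelian group $\Div(X)$, any relation $L_1M=L_2M$ with $M\in K_\mathrm{emb}(X)$ forces $D(L_1)=D(L_2)$ by cancellation in $\Div(X)$. Conversely, if $D(L_1)=D(L_2)$, then Theorem~\ref{regularsemigroup}(2) gives $L_1M=L_2M$ for some $M\in K_\mathrm{reg}(X)$; fixing any $N\in K_\mathrm{emb}(X)$ (one exists because $X$ is projective) and replacing $M$ by $MN$, which lies in $K_\mathrm{emb}(X)$ by the lemma, one gets $L_1\sim L_2$ in $K_\mathrm{emb}(X)$. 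Hence $D$ descends to a bijective semigroup homomorphism $G_{s,\mathrm{emb}}(X)=K_\mathrm{emb}(X)/{\sim}\to\Div_\mathrm{emb}(X)$, and since $\Div_\mathrm{emb}(X)$ has cancellation this bijective homomorphism is an isomorphism.

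I expect the product lemma to be the main obstacle: everything else is essentially a transcription of the $K_\mathrm{reg}(X)$ argument, whereas passing from equivalence in $K_\mathrm{reg}(X)$ to equivalence in the sub-semigroup $K_\mathrm{emb}(X)$ genuinely needs the fact that $K_\mathrm{emb}(X)$ absorbs multiplication by very ample systems.
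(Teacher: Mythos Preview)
The paper does not actually prove Theorem~\ref{embesemigroup}; it only remarks, after stating Theorems~\ref{regularsemigroup}--\ref{regulargroup}, that they ``are minor modifications of results found in \cite{K-K10}.'' So there is no in-paper proof to compare against beyond the implicit suggestion that the argument parallels the $K_\mathrm{reg}$ case.

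Your proposal does exactly that, and correctly. The reduction to Theorem~\ref{regularsemigroup} is sound, and you have put your finger on the one genuinely new point: equivalence in $K_\mathrm{reg}(X)$ must be upgraded to equivalence in the sub-semigroup $K_\mathrm{emb}(X)$, for which you need the absorption lemma that $M\in K_\mathrm{reg}(X)$ and $N\in K_\mathrm{emb}(X)$ force $MN\in K_\mathrm{emb}(X)$. Your Segre-embedding argument for this lemma is the standard one (it is the linear-system translation of ``basepoint free plus very ample is very ample''), and the graph argument for $(\Phi_M,\Phi_N)$ being a closed embedding is fine since $\Phi_N$ alone already embeds. The remaining steps --- surjectivity via $L(D)$, the containment $L\subset L(D)$, and the observation that a sublinear system embedding $X$ forces the complete system $|D(L)|$ to embed $X$ (so that $D$ indeed lands in $\Div_\mathrm{emb}(X)$) --- are all straightforward and handled correctly. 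Your proof is complete and in the spirit the paper intends.
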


One can describe the Grothendieck groups $G_\mathrm{reg}(X)$, $G_\mathrm{emb}(X)$ as follows:

\begin{theorem}\label{regulargroup} Let $X$ be  an irreducible  projective variety and let $\pi_1:G_\mathrm{reg}(X)\to G_\mathrm{emb}(X)$ and $\pi_2:G_\mathrm{emb}(X)\to G(X)$ be the homomorphisms induced by the inclusions  $K_\mathrm{reg}\subset K_\mathrm{emb}\subset K(X)$. Then $\pi_1$ is an isomorphism, and $\pi_2$ is an embedding.
Moreover, the Grothendieck groups $G_\mathrm{reg}(X)$  and $G_\mathrm{emb}(X)$ of $K_\mathrm{reg}(X)$ and $K_\mathrm{emb}(x)$ are isomorphic to   the additive group of   divisors $\Div (X)$ on $X$.
 \end{theorem}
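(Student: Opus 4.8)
\emph{Proof proposal.} The plan is to reduce everything to the identification of the Grothendieck \emph{semigroups} already established in Theorems \ref{regularsemigroup} and \ref{embesemigroup}, together with two purely formal facts about the constructions of Section \ref{sect2}: (i) for any semigroup $S$ one has $G(S)=G(G_s(S))$, and an injective homomorphism of cancellative semigroups induces an injective map of Grothendieck groups (cancellativity in the target turns the relation $a+d\sim_g b+c$ into the equality $a+d=b+c$, which then already holds in the source); and (ii) a cancellative sub-semigroup $S'$ of an abelian group $\Gamma$ that spans $\Gamma$ has Grothendieck group equal to $\Gamma$, with the inclusion inducing the identity.

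For the part about $\pi_1$ and the identification with $\Div(X)$: by Theorems \ref{regularsemigroup} and \ref{embesemigroup} the homomorphism $D$ identifies $G_{s,\mathrm{reg}}(X)$ with $\Div_\mathrm{free}(X)$ and $G_{s,\mathrm{emb}}(X)$ with $\Div_\mathrm{emb}(X)$, compatibly with the inclusion $K_\mathrm{emb}(X)\subset K_\mathrm{reg}(X)$, which on the divisor side is just the literal inclusion $\Div_\mathrm{emb}(X)\subset\Div_\mathrm{free}(X)$ (a very ample divisor is basepoint free). Now $\Div_\mathrm{emb}(X)$ and $\Div_\mathrm{free}(X)$ are cancellative, being sub-semigroups of the group $\Div(X)$, and both contain $\Div_\mathrm{emb}(X)$, which spans $\Div(X)$: fixing a very ample $A$, one has $D+mA\in\Div_\mathrm{emb}(X)$ for $m\gg0$ by Serre's theorem, so $D=(D+mA)-mA$. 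By fact (ii), $G_\mathrm{reg}(X)=G\bigl(G_{s,\mathrm{reg}}(X)\bigr)\cong\Div(X)$ and $G_\mathrm{emb}(X)=G\bigl(G_{s,\mathrm{emb}}(X)\bigr)\cong\Div(X)$, and the map induced by $K_\mathrm{emb}(X)\subset K_\mathrm{reg}(X)$ becomes the identity of $\Div(X)$; hence $\pi_1$ is an isomorphism.

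For $\pi_2$: using fact (i) (both $G_{s,\mathrm{reg}}(X)$ and $G_s(X)$ are cancellative Grothendieck semigroups), it suffices to show that the natural map $G_{s,\mathrm{reg}}(X)\to G_s(X)$ --- sending the $K_\mathrm{reg}(X)$-equivalence class of a space $L$ to its completion $\overline L$ --- is injective; it is well defined since $K_\mathrm{reg}(X)$-equivalence implies $K(X)$-equivalence. By the second part of Theorem \ref{regularsemigroup}, $L_1$ and $L_2$ are $K_\mathrm{reg}(X)$-equivalent precisely when $D(L_1)=D(L_2)$, while by the completion theorem of Section \ref{sect3} they are $K(X)$-equivalent precisely when $\overline{L_1}=\overline{L_2}$; so injectivity amounts to the implication $\overline{L_1}=\overline{L_2}\ \Rightarrow\ D(L_1)=D(L_2)$. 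This follows once one knows that $D(L)$ depends only on $\overline L$, which in turn rests on the formula $D(L)=-\inf_{f\in L}(f)$ for basepoint free $L$ (the coefficientwise minimum of the order functions, which is Cartier exactly because $L$ is basepoint free) and on the observation that, if $f^m+a_1f^{m-1}+\dots+a_m=0$ with $a_i\in L^i$, then evaluating $\operatorname{ord}_v$ along any prime divisor $v$ gives $\operatorname{ord}_v(f)\ge\min_{g\in L}\operatorname{ord}_v(g)$, so $\inf_{f\in L}(f)=\inf_{f\in\overline L}(f)$. The same reasoning with $K_\mathrm{emb}(X)$ in place of $K_\mathrm{reg}(X)$ shows that $G_{s,\mathrm{emb}}(X)\to G_s(X)$ is injective, whence, again by fact (i), $\pi_2$ is an embedding.

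I expect the only real work to be this last point: verifying cleanly that the fixed divisor $D(L)$ is insensitive to integral closure and is genuinely computed by the coefficientwise infimum of the order functions of $L$ when $L$ is basepoint free --- this is where Theorems \ref{regularsemigroup} and \ref{embesemigroup}, and hence basepoint-freeness, enter. Everything else is formal bookkeeping with Grothendieck semigroups and groups, plus Serre vanishing.
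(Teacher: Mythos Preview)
The paper does not actually prove Theorem~\ref{regulargroup}: immediately after the corollary it simply records that Theorems~\ref{regularsemigroup}, \ref{embesemigroup}, \ref{regulargroup} ``are minor modifications of results found in \cite{K-K10}.'' So there is no in-paper argument to compare against; one can only assess whether your proposal is a valid route to the result, and it is.

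Your reduction for $\pi_1$ and for the identification with $\Div(X)$ is exactly right: Theorems~\ref{regularsemigroup} and \ref{embesemigroup} identify the Grothendieck \emph{semigroups} with $\Div_\mathrm{free}(X)$ and $\Div_\mathrm{emb}(X)$, both of which are cancellative sub-semigroups of $\Div(X)$ generating it (Serre), so both Grothendieck groups are $\Div(X)$ and the induced map is the identity. (Note, incidentally, that the chain of inclusions in the statement is misprinted; the actual inclusions are $K_\mathrm{emb}(X)\subset K_\mathrm{reg}(X)\subset K(X)$, as stated earlier in Section~\ref{regularsemigroup}, and you correctly work with these.)

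For $\pi_2$, your strategy---reduce to injectivity of $G_{s,\mathrm{reg}}(X)\to G_s(X)$ and then show that $D(L)$ depends only on $\overline{L}$---is the natural one and matches what one finds in \cite{K-K10}. One small comment on the execution: the formula $D(L)=-\inf_{f\in L}(f)$ is literally a Weil-divisor statement, so on a non-normal $X$ you should phrase the verification locally in terms of a local generator of the Cartier divisor $D(L)$ rather than coefficientwise. Concretely, if locally $D(L)$ is cut out by $1/h$, then $a_i\in L^i\subset L(iD(L))$ means each $a_ih^i$ is regular, and the integral equation for $g$ becomes an integral equation for $gh$ over the local ring, forcing $gh$ regular and hence $g\in L(D(L))$. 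This is the same idea as your order-function argument, just written so that it does not presuppose normality. Equivalently, what you are proving is that $L(D)$ is already complete for any $D\in\Div_\mathrm{free}(X)$, which together with $D(L(D))=D$ and the last sentence of Theorem~\ref{regularsemigroup} gives $\overline{L}=L(D(L))$ and hence the desired injectivity.
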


\begin{corollary} The map $ L: \Div_\mathrm{emb}(X)\to K(X)$ of the semigroup $D_\mathrm{emb}(X)$ of very ample  divisors to the semigroup $K(X)$ can be extended to a homomorphism $\mathcal L:\Div(X)\to G(X)$ which embeds the group of  divisors on $X$ to the group~$G(X)$.
\end{corollary}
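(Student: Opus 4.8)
The plan is to deduce the corollary directly from Theorem~\ref{regulargroup} together with the preceding description of the map $D$ and the space-valued map $L$. First I would recall that by Theorem~\ref{embesemigroup} the map $D:K_\mathrm{emb}(X)\to\Div_\mathrm{emb}(X)$ identifies the Grothendieck semigroup $G_{s,\mathrm{emb}}(X)$ with the semigroup $\Div_\mathrm{emb}(X)$ of very ample divisors, and for $D\in\Div_\mathrm{emb}(X)$ the space $L(D)$ is precisely the largest representative of the corresponding class, i.e. $L(D)\supset L$ for every $L\in K_\mathrm{emb}(X)$ with $D(L)=D$, and $D(L(D))=D$. Thus $L$ is, up to the equivalence $\sim$, a section of $D$ on very ample divisors: the composite $D\circ L$ is the identity on $\Div_\mathrm{emb}(X)$, and $L$ sends $\Div_\mathrm{emb}(X)$ into $K_\mathrm{emb}(X)\subset K(X)$.

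Next I would check that $L:\Div_\mathrm{emb}(X)\to K(X)$ is, up to $\sim$, additive: for very ample $D_1,D_2$ one has $L(D_1)L(D_2)\subset L(D_1+D_2)$ (the product $fg$ of a section of $D_1$ and a section of $D_2$ is a section of $D_1+D_2$), and since $D$ is a homomorphism and $D(L(D_1)L(D_2))=D(L(D_1))+D(L(D_2))=D_1+D_2=D(L(D_1+D_2))$, Theorem~\ref{embesemigroup}(2) gives $L(D_1)L(D_2)\sim L(D_1+D_2)$ in $K_\mathrm{emb}(X)$. Hence composing $L$ with the quotient map $K_\mathrm{emb}(X)\to G_{s,\mathrm{emb}}(X)$ yields an honest semigroup homomorphism $\Div_\mathrm{emb}(X)\to G_{s,\mathrm{emb}}(X)$, which is simply the inverse of the isomorphism $D$ of Theorem~\ref{embesemigroup}. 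Now invoke the universal property of the Grothendieck group: since $\Div(X)$ is the group of formal differences of elements of $\Div_\mathrm{emb}(X)$ (every divisor is a difference of two very ample divisors, using that tensoring by a sufficiently ample divisor makes any divisor very ample), the homomorphism $\Div_\mathrm{emb}(X)\to G_{s,\mathrm{emb}}(X)\hookrightarrow G_\mathrm{emb}(X)$ extends uniquely to a group homomorphism $\Div(X)\to G_\mathrm{emb}(X)$; composing with the embedding $\pi_2:G_\mathrm{emb}(X)\hookrightarrow G(X)$ of Theorem~\ref{regulargroup} produces the desired $\mathcal L:\Div(X)\to G(X)$, and by construction it restricts to $L$ (followed by the natural map $K(X)\to G(X)$) on $\Div_\mathrm{emb}(X)$.

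Finally I would verify injectivity of $\mathcal L$. This is where the content of Theorem~\ref{regulargroup} is used: that theorem asserts $G_\mathrm{emb}(X)\cong\Div(X)$, and chasing the identifications, the composite $\Div(X)\to G_\mathrm{emb}(X)\xrightarrow{\sim}\Div(X)$ is the identity (it is the identity on the generating sub-semigroup $\Div_\mathrm{emb}(X)$ by Theorem~\ref{embesemigroup}, hence on all of $\Div(X)$ since both sides are groups generated by $\Div_\mathrm{emb}(X)$). Therefore $\Div(X)\to G_\mathrm{emb}(X)$ is an isomorphism, and post-composing with the embedding $\pi_2$ keeps it injective, so $\mathcal L$ embeds $\Div(X)$ into $G(X)$.

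The main obstacle, or rather the one point that needs genuine care rather than formal nonsense, is the extension from the semigroup $\Div_\mathrm{emb}(X)$ to the full group $\Div(X)$: one must be sure that $\Div(X)$ really is the Grothendieck group of $\Div_\mathrm{emb}(X)$ (i.e. that $\Div_\mathrm{emb}(X)$ generates $\Div(X)$ as a group and has the cancellation property inside it), so that the universal property applies cleanly; this rests on the standard fact that for any divisor $D$ and any very ample $A$ the divisor $D+mA$ is very ample for $m\gg0$, whence $D=(D+mA)-mA$. Everything else is a routine diagram chase through Theorems~\ref{embesemigroup} and~\ref{regulargroup}.
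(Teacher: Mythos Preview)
Your argument is correct and is exactly the deduction the paper has in mind: the corollary is stated in the paper without proof, immediately after Theorems~\ref{embesemigroup} and~\ref{regulargroup}, precisely because it is meant to be read off from those theorems together with the fact that $\Div(X)$ is generated as a group by $\Div_\mathrm{emb}(X)$. Your write-up spells out the steps (that $L$ is a section of $D$ up to $\sim$, the extension via the universal property, and injectivity via the isomorphism $G_\mathrm{emb}(X)\cong\Div(X)$) in the way the paper implicitly intends.
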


Theorems \ref{regularsemigroup}, \ref{embesemigroup}, \ref{regulargroup} are minor modifications of results found in  \cite{K-K10}.

\subsection{The Grothendieck group $G(X)$ and the group of Shokurov  $b$-divisors~on~$X$}\label{subsec3.4}
A birational isomorphism of varieties $X$ and $Y$ induces an isomorphism   of the groups $G(X)$ and $G(Y)$. A projective variety  $Y$ together with a  birational map to  $X$  is called a {\it projective model of $X$}. According to Theorem \ref{regulargroup}, the group of  divisors  $\Div(Y)$ of any projective model $Y$ of $X$ is naturally embedded to the group $G(X)$. One can show that for any finite set of elements $\{\mathcal L_i\}\subset G(X)$  there is a projective model $Y$ of $X$ such that the set  belongs to the image of the group $\Div(Y)$ (thus, the elements of the set $\{\mathcal L_i\}$
  can be considered as  divisors in $Y$).

A projective model $Y_1$ of $X$ {\it  dominates} a projective model $Y_2$ of $X$ if the birational map $\pi_{1,2}:Y_1\to Y_2$ (induced by birational isomorphisms between $Y_1$ and $X$, and between $X$ and $Y_2$)   is regular. The pull-back map $(\pi_{1,2})^*$ embeds the group $\Div (Y_2)$ to the group $\Div (Y_1)$. This embedding preserves the intersection index of divisors on $Y_2$~and~$Y_1$.

The {\it  group of Shokurov $b$-divisors on $X$} is
the direct limit of the  groups of  divisors of all  projective models of variety $X$.
By the projection formula, the group of  Shokurov $b$-divisors on an irreducible $n$-dimensional variety $X$ inherits the intersection index of any $n$-tuple of elements. The intersection theory of Shokurov $b$-divisors can be reduced to the classical intersection theory of  divisors on projective varieties. It works for algebraic varieties over an arbitrary algebraically closed field $\bold k$.

One can see  that the group $G(X)$ is naturally isomorphic to the group of Shokurov $b$-divisors on $X$. Thus, an intersection  index is defined for any $n$-tuple of elements~in~$G(X)$.

In  \cite{K-K10}, we constructed such an intersection index on the group $G(X)$ for an irreducible variety $X$ over the field $\Bbb C$ of complex numbers. We were inspired by the BKK theorem and we did not know  about Shokurov $b$-divisors. Our construction does not rely on classical intersection theory and is rather elementary. Unfortunately, it does not work over arbitrary algebraically closed fields $\bold k$. In section \ref{sect5}, we discuss our version of the bi-rationally invariant intersection  index on $G(X)$. Using Shokurov $b$-divisors, one can easily modify the results of section in such a way that they become applicable to an algebraic variety $X$ over an arbitrary algebraically closed field $\bold k$ (see~\cite{K-K14}).

\section{Semigroups  related to Newton polyhedra theory}\label{sect4}

In this section, we discuss two semigroups which are naturally related to Newton polyhedra theory.

\subsection{The Semigroup $\Set_n$}

One can define the sum of two subsets $A,B$ in a commutative group $G$ as follows: {\it the set $A+B$ is the set of all elements $z\in G$ representable as $z=a+b$ for some  $a\in A$, $b\in B$}.

\begin{definition} The {\it semigroup}  $\Set_n$ is the semigroup  of all
finite subsets in the lattice $\Bbb Z^n\subset \Bbb R^n$ with respect to the above
addition. The Grothendieck semigroup  and the Grothendieck group of $\Set_n$ are denoted by $G_s(\Set_n)$ and $G(\Set_n)$ respectively.
\end{definition}

\begin{theorem}\label{theorem4.1} The Grothendieck semigroup  $G_s(\Set_n)$ of $\Set_n$ is isomorphic to the semigroup of convex integral polytopes in $\Bbb R^n$, i.e., of convex polytopes whose  vertices belong to $\Bbb Z^n$.  The natural map $\rho:\Set_n\to G_s(\Set_n)$ sends  an element $A\in \Set_n$  to the convex hull $\Delta (A)$ of $A\subset \Bbb Z^n\subset \Bbb R^n$, i.e., $\rho(A)=\Delta(A)$.  Moreover, for $A,B\in  \Set_n$ the identity $\rho(A+B)=\Delta (A+B)=\Delta(A)+\Delta(B)$ holds.
\end{theorem}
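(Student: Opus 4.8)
The plan is to realize the asserted isomorphism as the map induced by $A\mapsto\Delta(A)$ and to verify it via the universal property of the Grothendieck semigroup. Write $\mathcal P_n$ for the semigroup of convex integral polytopes in $\Bbb R^n$ under Minkowski addition, and $\operatorname{Vert}(P)$ for the vertex set of $P\in\mathcal P_n$. First I would check that $\Delta\colon\Set_n\to\mathcal P_n$, $A\mapsto\Delta(A)$, is a surjective semigroup homomorphism: it takes values in $\mathcal P_n$ because the vertices of the convex hull of a finite $A\subset\Bbb Z^n$ lie in $A\subset\Bbb Z^n$; it is surjective because every integral polytope is the convex hull of its (integral) vertices; and it is a homomorphism because $\Delta(A+B)=\Delta(A)+\Delta(B)$. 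Here $\Delta(A+B)\subseteq\Delta(A)+\Delta(B)$ is clear, the right side being convex and containing $A+B$; the reverse inclusion follows by writing a point $p+q$ of $\Delta(A)+\Delta(B)$ with $p=\sum_i\alpha_i a_i$, $q=\sum_j\beta_j b_j$ convex combinations, so that $p+q=\sum_{i,j}\alpha_i\beta_j(a_i+b_j)$ is a convex combination of points of $A+B$.

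Next I would record that $\mathcal P_n$ has the cancellation property: the support function $P\mapsto h_P$ is injective and additive, $h_{P+Q}=h_P+h_Q$, so it embeds $\mathcal P_n$ into the cancellative additive group of real functions on $\Bbb R^n$. Since $\rho\colon\Set_n\to G_s(\Set_n)$ is universal among homomorphisms of $\Set_n$ into cancellative semigroups, $\Delta$ factors uniquely as $\Delta=\bar\Delta\circ\rho$ with $\bar\Delta\colon G_s(\Set_n)\to\mathcal P_n$ a homomorphism, necessarily surjective. Under this identification $\rho(A)$ corresponds to $\Delta(A)$ and $\rho(A+B)$ to $\Delta(A)+\Delta(B)=\Delta(A+B)$, which is the last assertion of the theorem once $\bar\Delta$ is shown to be an isomorphism. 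Thus everything reduces to the injectivity of $\bar\Delta$, i.e. to showing that $\Delta(A)=\Delta(B)$ implies $A\sim B$ in $\Set_n$.

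The heart of the matter is a \emph{saturation} statement: for an integral polytope $P$ with $V:=|\operatorname{Vert}(P)|$ and any integer $k\ge V-1$, one has $(k+1)P\subseteq\bigcup_{v\in\operatorname{Vert}(P)}(v+kP)$. Indeed, given $x=(k+1)y\in(k+1)P$ with $y=\sum_v\lambda_v v$ a convex combination of the vertices, choose $v_0$ with $\lambda_{v_0}\ge 1/V$; then $x-v_0=\bigl((k+1)\lambda_{v_0}-1\bigr)v_0+\sum_{v\ne v_0}(k+1)\lambda_v\,v$ is a nonnegative combination of the vertices of $P$ whose coefficients sum to $k$, hence $x-v_0\in kP$. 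Fix such a $k$ and put $C=kP\cap\Bbb Z^n$, so $\Delta(C)=kP$ because the vertices of $kP$ are integral. For any finite $A$ with $\Delta(A)=P$ we have $\operatorname{Vert}(P)\subseteq A\subseteq P$, hence $A+C\subseteq(k+1)P\cap\Bbb Z^n$, while the saturation statement gives $(k+1)P\cap\Bbb Z^n\subseteq\operatorname{Vert}(P)+(kP\cap\Bbb Z^n)\subseteq A+C$; therefore $A+C=(k+1)P\cap\Bbb Z^n$. As the right-hand side depends only on $P$, the equality $\Delta(A)=\Delta(B)=P$ yields $A+C=B+C$ with one and the same $C$, i.e. $A\sim B$. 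This proves that $\bar\Delta$ is injective, hence an isomorphism, and the theorem follows.

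The first two paragraphs I expect to be routine (the support-function embedding, the bilinearity bookkeeping for Minkowski sums of convex hulls). The genuinely geometric --- and most delicate --- step is the saturation statement, resting on the elementary fact that a convex combination of $V$ vectors has some coefficient $\ge 1/V$, together with the organizational point that one must saturate by a \emph{single} auxiliary set $C$ depending only on the common polytope $P$: attempting to saturate $A$ and $B$ by separate sets fails, since the Minkowski sum of a lattice-saturated set with an arbitrary finite set need not be saturated.
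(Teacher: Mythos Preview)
Your argument is correct. The paper does not prove this theorem itself but simply refers the reader to \cite{Kh92}, so there is no in-paper proof to compare against; your self-contained route --- factoring $\Delta$ through $G_s(\Set_n)$ via the universal property and the cancellation property of $\mathcal P_n$, then establishing injectivity by the saturation lemma $(k+1)P\subseteq\bigcup_{v\in\operatorname{Vert}(P)}(v+kP)$ for $k\ge|\operatorname{Vert}(P)|-1$ and using the single witness $C=kP\cap\Bbb Z^n$ depending only on $P=\Delta(A)=\Delta(B)$ --- is clean and complete.
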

A proof of Theorem \ref{theorem4.1} can be found in  \cite{Kh92}.

\subsection{The semigroup $K_n$}\label{subsec4.2}

Let $X=(\bold k ^*)^n$ be the $n$-dimensional  torus over an algebraically closed field $\bold k$ and let $\Bbb Z^n$ be the lattice of its characters. To any finite set $A\subset \Bbb Z^n$, one can associate the finite dimensional space $L_A\in K(X)$ spanned by characters from the set $A$. The multiplicative group $(\bold k ^*)^n$ naturally acts  on the field $\bold k (X)$ of rational functions on $X=(\bold k^*)^n$. The spaces $L_A$ are exactly finite dimensional spaces  invariant under this action. The set $A$ can be recovered from an invariant space as the collection of all characters of the action.

\begin{definition} The {\it semigroup}  $K_n$ is the sub-semigroup  of the semigroup $K(X)$ whose elements are the spaces $L_A$, where $A$ is a finite subset in the lattice $\Bbb Z^n$.
\end{definition}

The following lemma is obvious:

\begin{lemma} The map $\tau:K_n\to Set_n$, which sends the space $L_A\in K_n$ to the set $A\subset \Bbb Z^n$  which is considered as the element of the
semigroup $\Set_n$, provides an isomorphism between semigroups $K_n$ and $Set_n$.
\end{lemma}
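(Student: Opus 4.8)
The plan is to verify directly that the map $\tau : K_n \to \Set_n$ is a well-defined bijective homomorphism of semigroups, using the characterization of the spaces $L_A$ via the torus action. First I would check that $\tau$ is well-defined: this amounts to observing that the finite set $A$ can be recovered from the space $L_A$. Since $X = (\bold k^*)^n$ and the characters of the torus are linearly independent elements of $\bold k(X)$, each $L_A$ has $\{x^a : a \in A\}$ as a basis, so $\dim_{\bold k} L_A = |A|$ and, more to the point, $A$ is exactly the set of characters $\chi$ of the $(\bold k^*)^n$-action on $\bold k(X)$ for which the $\chi$-weight space of $L_A$ is nonzero. Hence $L_A = L_B$ forces $A = B$, and $\tau$ is a well-defined injection. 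Surjectivity is immediate from the definition of $K_n$, whose elements are by definition precisely the spaces $L_A$ for finite $A \subset \Bbb Z^n$.

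Next I would check that $\tau$ is a homomorphism, i.e. $\tau(L_A \cdot L_B) = A + B$ in $\Set_n$. By definition $L_A \cdot L_B$ is the span of all products $fg$ with $f \in L_A$, $g \in L_B$; since $L_A$ is spanned by the monomials $x^a$ ($a \in A$) and $L_B$ by the monomials $x^b$ ($b \in B$), the products are spanned by $x^a x^b = x^{a+b}$ with $a \in A$, $b \in B$, i.e. by $\{x^c : c \in A + B\}$. Therefore $L_A \cdot L_B = L_{A+B}$, which says exactly $\tau(L_A L_B) = A+B = \tau(L_A) + \tau(L_B)$. One should also note the identity elements match: the identity $\bold 1 \in K_n$ is the space of constants, which is $L_{\{0\}}$, and $\tau(\bold 1) = \{0\}$ is the identity of $\Set_n$.

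Assembling these observations gives that $\tau$ is an isomorphism of semigroups with identity, which is the assertion of the lemma. The only step with any real content is the recovery of $A$ from $L_A$ — that is, the linear independence of distinct characters of $(\bold k^*)^n$ inside $\bold k(X)$ — and this is the point that makes $\tau$ injective and its inverse well-defined; everything else is a routine unwinding of the definition of the product in $K(X)$. Since the paper states the lemma is ``obvious,'' I would expect the intended proof to be just a one-sentence remark that $L_A$ determines $A$ as its set of weights under the torus action, followed by the observation that $L_A L_B = L_{A+B}$; no genuine obstacle arises.
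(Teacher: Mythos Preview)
Your proposal is correct and matches the paper's implicit argument: the paper states the lemma as ``obvious'' without proof, having already noted just before it that the set $A$ can be recovered from $L_A$ as the collection of characters appearing in the torus action. Your write-up simply makes explicit this recovery (giving injectivity and well-definedness) together with the routine verification $L_A L_B = L_{A+B}$, which is exactly the intended content.
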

One can check the following lemma (see  \cite{Kh92}).

\begin{lemma}\label{Delta} For a finite set $A\subset \Bbb Z^n$, the completion of the space $L_A$ in the semigroup $K_n$  is equal to the space $L_B$, where $B=\Delta(A)\cap \Bbb Z^n$.
\end{lemma}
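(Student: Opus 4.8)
The plan is to identify the completion $\overline{L_A}$ inside $K_n$ by combining two facts: first, that passing to the Grothendieck semigroup $G_s(K_n)$ kills exactly the relation $\sim$, and a space equals its own completion iff it is $\sim$-closed; second, the explicit description of $\rho$ on $\Set_n\cong K_n$ from Theorem \ref{theorem4.1}. Concretely, under the isomorphism $\tau:K_n\to\Set_n$ of the preceding lemma, the equivalence $\sim$ on $K_n$ corresponds to $\sim$ on $\Set_n$, and by Theorem \ref{theorem4.1} two finite sets $A,A'\subset\Bbb Z^n$ are $\sim$-equivalent iff $\Delta(A)=\Delta(A')$. Since the completion of a space is the unique $\sim$-closed representative of its equivalence class (by the theorems on completions in Section \ref{sect3}, which apply to any $L\in K(X)$ and hence to $L_A\in K_n\subset K(X)$), it suffices to show that $B:=\Delta(A)\cap\Bbb Z^n$ satisfies (i) $L_B\sim L_A$ in $K_n$, and (ii) $L_B$ is $\sim$-closed in $K_n$, i.e.\ $\overline{L_B}=L_B$.

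For (i): by Theorem \ref{theorem4.1}, $\Delta(B)=\Delta\big(\Delta(A)\cap\Bbb Z^n\big)=\Delta(A)$, because the convex hull of the lattice points of an integral polytope is that polytope again (the vertices of $\Delta(A)$ lie in $\Bbb Z^n$, hence in $B$). Therefore $\rho(A)=\rho(B)$ in $G_s(\Set_n)$, which is exactly $A\sim B$ in $\Set_n$, hence $L_A\sim L_B$ in $K_n$. For (ii): I need to show no character outside $\Delta(A)$ is integral over $L_B$, while this is automatic for characters inside. If $f=\sum c_\alpha x^\alpha$ satisfies $f^m+a_1f^{m-1}+\dots+a_m=0$ with $a_i\in L_B^i$, then every monomial appearing in $f$ must lie in the ``saturation'' of $\Delta(B)$; more precisely, looking at Newton polytopes, $\mathrm{Newt}(f^m)=m\cdot\mathrm{Newt}(f)$ must be contained in $\bigcup_i \mathrm{Newt}(a_i)\cdot(\text{lower powers})$, forcing $\mathrm{Newt}(f)\subseteq\Delta(B)=\Delta(A)$, and then $f\in L_{\Delta(A)\cap\Bbb Z^n}=L_B$. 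This is the Newton-polytope computation underlying the claim, and it is precisely the content cited to \cite{Kh92}; I would either quote it or give the short convexity argument: if $\alpha\notin\Delta(A)$ is an exposed vertex of $\mathrm{Newt}(f)$ with $\alpha$ a vertex maximizing some linear functional $\ell$, then comparing the $\ell$-leading terms in the integral equation shows $m\alpha$ must be attainable as a sum of $\ell$-maximal exponents from $L_B$, all of which lie in $\Delta(A)$, a contradiction since $\Delta(A)$ is convex.

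The main obstacle is step (ii), the closedness of $L_B$: one must rule out that algebraic combinations of characters from a convex set of exponents produce a character outside that set. The one-line reason is convexity of Newton polytopes under multiplication ($\mathrm{Newt}(gh)=\mathrm{Newt}(g)+\mathrm{Newt}(h)$) together with the fact that an integral dependence relation forces a containment of Newton polytopes; combined with $\Delta(A)$ being already saturated (equal to the hull of its lattice points), this pins $\overline{L_B}$ down to $L_B$ itself. Everything else is bookkeeping via the isomorphism $K_n\cong\Set_n$ and Theorem \ref{theorem4.1}, so the proof reduces to this single convex-geometric observation, which is why the statement can legitimately be attributed to \cite{Kh92} without reproducing the details here.
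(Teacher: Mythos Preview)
The paper does not prove this lemma; it simply refers the reader to \cite{Kh92}. Your outline is therefore already more detailed than what the paper offers, and its logical structure is sound: use Theorem~\ref{theorem4.1} together with the isomorphism $K_n\cong\Set_n$ to get $L_A\sim L_B$ in $K(X)$, then show $L_B$ is complete, and conclude $\overline{L_A}=L_B$ from the characterization of completions in Section~\ref{sect3}.

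There is, however, one real gap in your step~(ii). You begin ``If $f=\sum c_\alpha x^\alpha$ satisfies\dots'' and then run a Newton-polytope argument, but the completion $\overline{L_B}$ consists by definition of all \emph{rational functions} on $(\bold k^*)^n$ integral over $L_B$, and a general rational function has no Newton polytope. You must first show that any such $f$ is automatically a Laurent polynomial. This is straightforward: the integral relation $f^m+a_1f^{m-1}+\dots+a_m=0$ with each $a_i\in L_B^i\subset \bold k[x_1^{\pm1},\dots,x_n^{\pm1}]$ exhibits $f$ as integral over the Laurent polynomial ring, which is normal (being a localization of a polynomial ring) and hence integrally closed in its fraction field $\bold k(X)$; therefore $f$ lies in the Laurent polynomial ring. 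Once that is established, your linear-functional argument on the vertices of $\mathrm{Newt}(f)$ is correct and yields $\mathrm{Newt}(f)\subseteq\Delta(A)$, hence $f\in L_B$.
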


Lemma \ref{Delta} relates the  algebraic operation of completion of a space $L_A\in K_n$  to the geometric operation of taking  a convex hull of a subset in a real vector space.

Lemma \ref{Delta} also explains why  the  convex hull  of the set of monomials which appear in a Laurent polynomial  $P$ with nonzero coefficients  plays such an important role in Newton polyhedra theory.


\section{ Birationally invariant an intersection index}\label{sect5}

In this section, we discuss an intersection theory
on the semigroup $K(X)$ which assigns the intersection index to an $n$-tuple of spaces $L_1,\dots,L_n$ on an $n$-dimensional irreducible variety $X$.  A birational isomorphism $\phi:X\dashrightarrow  Y$ of varieties $X$ and $Y$  provides an isomorphism $\phi^*: K(Y)\to K(X)$ between the semigroups $K(Y)$ and $K(X)$ which preserves this intersection index.

The Grothendieck group $G(X)$ is isomorphic to the group of Shokurov $b$-divisors on $X$ (see section \ref{subsec3.4},  \cite{Isk03},  \cite{K-K14}). Thus, one can define the intersection index on $K(X)$ using the intersection theory of Shokurov $b$-divisors. This approach works for algebraic varieties  over an  arbitrary algebraically closed field. In this section, we confine our consideration to algebraic varieties over the field $\Bbb C$. We introduced the  intersection  index on the group $G(X)$ in  \cite{K-K10} inspired by the BKK theorem, and we did not know about Shokurov $(b)$-divisors.
Our approach is based on simple yet powerful ideas from smooth topology. One can generalize the results of this section to algebraic varieties over arbitrary algebraically closed fields using the intersection theory of Shokurov $(b)$-divisors (see  \cite{K-K14}).

\subsection{Intersection indices on $K(X)$}
Let $X$ be an irreducible $n$-dimensional variety over $\Bbb C$. A {\it smooth open set}  $U\subset X$ is  a   Zariski open set whose complement $W=X\setminus  U $ contains  all singular points of $X$ (note that $\dim_{\Bbb C} W<n$).

\begin{definition} For any  rational vector function $\bold f=(f_1,\dots,f_n)$  on $X$ with $n=\dim_\Bbb CX$, and  for  any smooth open set  $U\subset X$,   we define the {\it number $\#\{ \bold f(x)=0|x\in U\}$   of  simple roots of $\bold f$  in $U$} as the number of points $x\in U$ such that:
\begin{enumerate}
\item  $\bold f$ is regular at $x$, and $\bold f(x)=0$;
\item the differentials $df_1,\dots, df_n$ of components of $\bold f$  are independent on the tangent space to $X$ at the point  $x$.
\end{enumerate}
\end{definition}

\begin{definition} For each space $L\in K(X)$ we define:
\begin{enumerate}
\item the {\it set  $P_L\subset X$ of poles of $L$} as the smallest Zariski closed subvariety in $X$ on whose complement $X\setminus P_L$ all functions $f\in L$ are regular (or all function from some basis  $L$ are regular);
\item  the set $B_L$  of {\it base points of $L$} as the subvariety of $X\setminus P_L$ on which vanish all functions   $f\in L$.
\end{enumerate}
A  set $U\subset X$ is {\it admissible} for a finite set  $\{L_j\}$  of spaces $L_i\in K(X)$ if it is a smooth open set, and its complement $X\setminus U$  contains the sets $P_{L_i}$ and $B_{L_i}$  of each space $L_i\in \{L_j\}$.
\end{definition}

The following theorem can be found in \cite{K-K10}.

\begin{theorem}\label{index} Let $L_1,\dots,L_n\in K(X)$ be any $n$-tuple of spaces. Then for a generic vector function $\bold f=(f_1,\dots,f_n)$ with  $f_i\in L_i$, and for any set $U$ admissible for $\{L_i\}$, the number  $\#\{\bold f=0|x\in U\}$ depends only on $L_1,\dots,L_n$; that is, it is independent of the choice of a generic vector function $\bold f$ and of the admissible set
$U$.
\end{theorem}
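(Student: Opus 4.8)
The plan is to prove two independence statements: independence of the admissible set $U$ for a fixed generic $\bold f$, and independence of the generic choice of $\bold f$. I would carry these out in this order because the first is largely bookkeeping, while the second is where the real content lies.

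First I would handle the choice of admissible set. If $U_1$ and $U_2$ are two admissible sets for $\{L_i\}$, then $U_1 \cap U_2$ is again admissible (it is Zariski open, its complement still contains all the singular points and all the $P_{L_i}$ and $B_{L_i}$), so it suffices to compare $U$ with $U' \subset U$. The points counted by $\#\{\bold f = 0 \mid x \in U\}$ are the simple roots of $\bold f$; I claim every such root in fact lies in $U'$. Indeed, a simple root $x$ is a point where $\bold f$ is regular and $\bold f(x) = 0$; since $\bold f = (f_1,\dots,f_n)$ with $f_i \in L_i$, regularity of $\bold f$ at $x$ means $x \notin P_{L_i}$ for all $i$, and the vanishing $f_i(x) = 0$ means we must rule out $x$ lying on $B_{L_i}$ — but a generic $f_i \in L_i$ vanishes at a point of $B_{L_i}^c$ only on a proper subvariety, and more to the point, any point of $X \setminus U'$ that is not a pole or base point is still a point where we must check; the cleanest formulation is that for generic $\bold f$ the zero set of $\bold f$ on $X \setminus (U' \cup \bigcup P_{L_i})$ is empty because $U'$ already excludes a set containing all the $P_{L_i}$ and $B_{L_i}$, and a generic vector function has no simple roots on the low-dimensional excluded locus. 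So the count over $U$ equals the count over $U'$, and hence is independent of the admissible set. I should be careful here to phrase ``generic'' so that it simultaneously works for the finitely many comparisons needed.

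The heart of the argument is independence of the generic $\bold f$. Fix an admissible $U$. The space of vector functions is $L_1 \times \cdots \times L_n$, a product of finite-dimensional $\mathbb{C}$-vector spaces, hence an irreducible variety; ``generic'' means outside a proper Zariski-closed (or at least meager, measure-zero) subset. The strategy is a standard continuity/degree argument from smooth topology, as the paper advertises: one shows the integer-valued function $\bold f \mapsto \#\{\bold f = 0 \mid x \in U\}$ is locally constant on the generic (good) locus, which is connected because the complement of a proper subvariety of an irreducible complex variety is connected in the classical topology. Local constancy follows from a properness/compactness input: one compactifies — working on a projective model $Y$ of $X$ dominating a resolution on which the $L_i$ become base-point-free linear systems of sections of line bundles — so that the zeros of $\bold f$ cannot escape to infinity or collide as $\bold f$ varies in a small generic family, and then the number of transverse intersection points of $n$ generic divisors is constant by the implicit function theorem (the zeros move continuously and stay simple). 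Subtracting the contribution from $Y \setminus U$, which is controlled because the excluded locus has dimension $< n$ and generic sections avoid creating simple roots there, gives that $\#\{\bold f = 0 \mid x \in U\}$ itself is locally constant, hence globally constant on the connected good locus.

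The main obstacle is the properness/non-escape step: one must ensure that as $\bold f$ varies, no zero runs off to the boundary $X \setminus U$ or off a projective model, and that no new zeros appear from infinity. This is exactly where passing to a good projective model and to the language of base-point-free linear systems is essential — on such a model the relevant sections extend and the intersection is genuinely compact — and it is also where the restriction to $\mathbb{C}$ enters, since the argument uses the classical topology (connectedness of Zariski-open sets, the implicit function theorem, continuity of roots). I would cite \cite{K-K10} for the detailed execution of this model-theoretic bookkeeping, since the excerpt explicitly defers the heavy proof there, and simply indicate that the count equals the intersection number $(D_1 \cdots D_n)$ of the associated divisors $D_i$ on any sufficiently high model, which is manifestly independent of all choices.
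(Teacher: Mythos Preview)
The paper does not prove this theorem at all: immediately before the statement it simply writes ``The following theorem can be found in \cite{K-K10}'' and moves on. So there is no in-paper proof to compare against; your proposal, which also ultimately defers the heavy lifting to \cite{K-K10}, is already doing more than the paper does.

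That said, a few comments on the sketch itself. Your outline is the right one and matches the strategy of \cite{K-K10}: (i) reduce independence of the admissible set to the case $U' \subset U$ and argue that generic $\bold f$ has no roots on the low-dimensional piece $U \setminus U'$; (ii) show the count is locally constant on the connected parameter space $L_1 \times \cdots \times L_n$ by passing to a projective model on which the $L_i$ become basepoint-free and invoking compactness plus transversality. Your treatment of (i) wobbles a bit in the middle paragraph --- the clean statement is that $\dim(U \setminus U') \leq n-1$, and since $U$ contains no base points of any $L_i$, a generic $f_1$ does not vanish identically on any component of $U \setminus U'$, so $\{f_1=0\} \cap (U\setminus U')$ has dimension $\leq n-2$; iterating, the common zero set of a generic $\bold f$ on $U \setminus U'$ is empty. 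For (ii), the ``non-escape'' issue you flag is exactly the substantive point, and resolving it by pulling back to a suitable projective model is precisely what \cite{K-K10} does; your identification of this as the place where working over $\mathbb{C}$ and using the classical topology matters is also accurate and is consistent with the paper's own remark that this construction ``does not work over arbitrary algebraically closed fields.''
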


\begin{definition} Under the assumptions of Theorem \ref{index}, the {\it  intersection index  of  spaces $L_1,\dots,L_n\in K(X)$} is the  the number $\#\{\bold f=0|x\in U\}$ which we denote  by  $[L_1,\dots,L_n]$. We   denote by $\D(L)$ the {\it self-intersection index} of the space  $L\in K(X)$, i.e.,  $\D(L)=[L,\dots,L]$.
\end{definition}

\begin{theorem}\label{Dpolar}  The intersection index $[L_1,\dots,L_n]$ is $n$-polarization of the function $\D:K(X)\to \Bbb Q$, where $\D(L)=[L,\dots,L]$. Namely:
\begin{enumerate}

\item  $[L_1,\dots, L_n]$ is symmetric  with respect to permutations of the spaces
$L_1,\dots,L_n$;

\item $[L_1,\dots, L_n]$  is linear in each argument with respect to the multiplication.
Linearity in the first argument means that for  $L_1', L_1'', L_2,\dots,L_n\in K(X)$ we have
\begin{equation*}[L'_1 L''_1, L_2,\dots, L_n]=
[L'_1,L_2,\dots,L_n]+ [L''_1,L_2, \dots,L_n].\end{equation*}
\end{enumerate}
\end{theorem}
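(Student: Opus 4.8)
The plan is to deduce Theorem \ref{Dpolar} from Theorem \ref{index} by checking directly that the geometrically defined intersection index $[L_1,\dots,L_n]$ satisfies the three defining properties of an $n$-polarization of $\D$ (coincidence on the diagonal, symmetry, additivity in each argument). Symmetry and diagonal coincidence are immediate from the definition: the number $\#\{\bold f=0 \mid x\in U\}$ does not depend on the order in which the spaces $L_1,\dots,L_n$ are listed, since it is defined in terms of the unordered condition that the differentials $df_1,\dots,df_n$ be independent at a common zero; and putting $L_1=\dots=L_n=L$ gives exactly $\D(L)$ by definition. So the real content is the additivity (multiplicativity-to-additivity) property in the first argument.

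For the additivity, fix $L_2,\dots,L_n$ and let $L_1', L_1''\in K(X)$; I want $[L_1'L_1'',L_2,\dots,L_n]=[L_1',L_2,\dots,L_n]+[L_1'',L_2,\dots,L_n]$. The key observation is that a generic element of $L_1'L_1''$ can be taken of the form $f_1'f_1''$ with $f_1'\in L_1'$, $f_1''\in L_1''$ generic (this is where one must be slightly careful, since generic elements of the product space are linear combinations of such products, not products themselves; one argues that the count is already achieved by the subfamily of split functions $f_1'f_1''$, using Theorem \ref{index} to know the count is independent of the generic choice, together with a Bertini/Sard-type argument that the zero locus of $f_1'f_1''$ meets the complete intersection $\{f_2=\dots=f_n=0\}$ transversally in $U$ for generic $f_1',f_1'',f_2,\dots,f_n$). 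Having done that, choose an admissible set $U$ for the whole collection $\{L_1',L_1'',L_2,\dots,L_n\}$ — such a $U$ is admissible for each of the three triples/collections involved since the relevant pole and base loci only shrink — and on $U$ observe the pointwise decomposition
\begin{equation*}
\{x\in U : f_1'(x)f_1''(x)=0,\ f_2(x)=\dots=f_n(x)=0\}
= Z' \sqcup Z'',
\end{equation*}
where $Z'=\{x\in U: f_1'(x)=0,\ f_2(x)=\dots=f_n(x)=0\}$ and $Z''$ is the analogous set with $f_1''$. The union is disjoint for generic choices because a generic $f_1'$ and a generic $f_1''$ have no common zero on the $(n-1)$-dimensional variety $\{f_2=\dots=f_n=0\}\cap U$ — indeed their zero divisors move in base-point-free-enough families after passing to $U$, so generically they are disjoint on a fixed lower-dimensional set. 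Finally one checks that a point of $Z'$ is a simple root of $(f_1'f_1'',f_2,\dots,f_n)$ if and only if it is a simple root of $(f_1',f_2,\dots,f_n)$: at such a point $f_1''\neq 0$, so $d(f_1'f_1'')=f_1''\,df_1' + f_1'\,df_1''=f_1''\,df_1'$ (the second term drops since $f_1'=0$ there), which is a nonzero scalar multiple of $df_1'$, hence the independence of differentials is equivalent; symmetrically for $Z''$. Counting points gives $\#(Z'\sqcup Z'')=\#Z'+\#Z''$, which is exactly the claimed additivity, and then Lemma \ref{n!} (or the definition of $n$-polarization) identifies $[\,\cdot\,]$ as the unique $n$-polarization of $\D$.

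The main obstacle I anticipate is the genericity bookkeeping in the additivity step: one must justify (i) that restricting attention to split functions $f_1'f_1''$ inside the product space $L_1'L_1''$ does not change the intersection count, and (ii) that for generic data the zero loci of $f_1'$ and $f_1''$ are disjoint on $\{f_2=\dots=f_n=0\}\cap U$ and everything is transverse, so that the set-theoretic disjoint union is also a "simple-root" disjoint union. Both points are standard consequences of Bertini's theorem and the independence-of-generic-choice guaranteed by Theorem \ref{index}, applied on the smooth open set $U$ where all the relevant linear systems restrict to base-point-free systems; but making the argument clean requires invoking the admissibility of $U$ at exactly the right moments. Everything else — symmetry, the diagonal value, and the local differential computation $d(f_1'f_1'')=f_1''\,df_1'$ at a zero of $f_1'$ — is routine.
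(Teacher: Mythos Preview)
The paper does not actually prove Theorem~\ref{Dpolar}; it simply states that the result ``can be found in \cite{K-K10}'' and moves on. Your proposal is the natural direct verification and is essentially the argument given in that reference: symmetry and the diagonal value are immediate, and additivity is obtained by taking a split function $f_1'f_1''$ in place of a generic element of $L_1'L_1''$, decomposing its zero set on the admissible open $U$, and using the Leibniz computation $d(f_1'f_1'')=f_1''\,df_1'$ at a point where $f_1'=0$, $f_1''\neq 0$.

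Two small remarks. First, a slip: the set $\{f_2=\dots=f_n=0\}\cap U$ is a \emph{curve} (one-dimensional), not $(n-1)$-dimensional; you are cutting by $n-1$ generic equations in an $n$-fold. This does not affect the argument --- on a curve the finite zero sets of generic $f_1'$ and $f_1''$ are disjoint --- but the sentence should be corrected. Second, your identified obstacle (i) is genuine but harmless once stated precisely: the ``generic'' condition in Theorem~\ref{index} is an explicit open transversality condition on $(f_1,f_2,\dots,f_n)$, and it suffices to check that the pullback of this condition under $(f_1',f_1'',f_2,\dots,f_n)\mapsto (f_1'f_1'',f_2,\dots,f_n)$ is nonempty open in $L_1'\times L_1''\times L_2\times\cdots\times L_n$. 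That is exactly what your Bertini/disjointness verification on $U$ accomplishes; you do not need to argue that split functions are Zariski dense in $L_1'L_1''$ (they are not).
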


Theorem \ref{Dpolar} can be found in \cite{K-K10}. It implies that the self-intersection index is a homogeneous polynomial of degree $n$ on the semigroup $K(X)$.

\begin{theorem} The intersection index  $[L_1,\dots L_n]$  is:
\begin{enumerate}
\item  nonnegative, i.e., $[L_1,\dots,L_n]\geq 0$;
\item  monotone, i.e.,
if $L_1'\subset L_1, \dots, L_n'\subset L_n$, then
$ [L_1',\dots, L_n']\leq  [L_1,\dots, L_n]$.
\end{enumerate}
\end{theorem}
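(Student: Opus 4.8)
The plan is to deduce both statements from the very definition of the intersection index as a count of simple common zeros $\#\{\bold f = 0 \mid x \in U\}$ of a generic vector function $\bold f = (f_1,\dots,f_n)$ with $f_i \in L_i$, evaluated on an admissible smooth open set $U$; by Theorem \ref{index} this count is independent of all the choices. Nonnegativity is then essentially immediate: the intersection index is, by construction, the cardinality of a finite set of points, hence a nonnegative integer. The only thing worth spelling out is that the set in question is genuinely finite, which is guaranteed because for generic $\bold f$ the common zero locus in $U$ is a $0$-dimensional variety (here one invokes, as in the proof of Theorem \ref{index} in \cite{K-K10}, that a generic choice of $f_i$ from each $L_i$ avoids all the excess-dimensional components, using that $U$ is smooth and the base loci $B_{L_i}$ and pole loci $P_{L_i}$ lie in $X \setminus U$).

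For monotonicity, suppose $L_1' \subset L_1, \dots, L_n' \subset L_n$. First I would reduce to the case where the two $n$-tuples differ in a single slot, say $L_1' \subset L_1$ and $L_i' = L_i$ for $i \geq 2$, and then iterate the resulting inequality $n$ times. Next I would choose a smooth open set $U$ that is simultaneously admissible for $\{L_1', L_2, \dots, L_n\}$ and for $\{L_1, L_2, \dots, L_n\}$; such a $U$ exists because we may take the complement of the (finite) union of all the relevant pole and base loci, and by Theorem \ref{index} the intersection index may be computed on any admissible set. The heart of the argument is then the following: pick $f_2 \in L_2, \dots, f_n \in L_n$ generic, so that (by a Bertini/Sard-type genericity argument over the smooth set $U$) the locus $Z = \{x \in U \mid f_2(x) = \dots = f_n(x) = 0,\ df_2, \dots, df_n \text{ independent}\}$ is a smooth curve, and the index equals the number of simple zeros on $Z$ of a generic $f_1 \in L_1$ (respectively generic $f_1' \in L_1'$). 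Since $L_1' \subset L_1$, a generic element of $L_1'$ is a particular — though possibly non-generic from the viewpoint of $L_1$ — element of $L_1$; the point is that the number of simple common zeros obtained from a generic $f_1' \in L_1'$ cannot exceed the number obtained from a generic $f_1 \in L_1$, because passing from the smaller space to the larger one can only make zeros move or acquire multiplicity or escape, never create new simple ones beyond what the generic count of the larger space records.

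The step I expect to be the main obstacle is making the last comparison rigorous rather than heuristic: one must argue that the intersection index computed from the subspace $L_1'$ is a \emph{lower} semicontinuous-type count bounded above by the generic value for $L_1$. A clean way to do this is to work on a projective model and use the interpretation of $\D$ and of $[L_1,\dots,L_n]$ via the Kodaira maps and pull-back divisors (as set up in Section \ref{sect3}), where monotonicity becomes the statement that the self-intersection of a base-point-free divisor grows when the linear system is enlarged, a standard fact; alternatively, one can use the semicontinuity of the number of isolated simple solutions in the family of all $f_1 \in L_1$, noting that the generic value over $L_1$ dominates the generic value over the linear subspace $L_1'$. Either route reduces the claim to genericity statements of the kind already used to prove Theorem \ref{index}, so no genuinely new machinery is needed — only care in the degeneration argument.
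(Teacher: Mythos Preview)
Your approach is essentially the paper's, but the paper executes the key comparison much more directly. For monotonicity the paper does not reduce to one slot, does not pass to a curve, and does not invoke projective models or Kodaira maps. It simply takes a generic $\bold f'=(f_1',\dots,f_n')$ with $f_i'\in L_i'$, computes $[L_1',\dots,L_n']=\#\{\bold f'(x)=0\mid x\in U\}$, and then perturbs $\bold f'$ slightly inside the larger product $L_1\times\cdots\times L_n$ to a generic $\bold f$. The entire argument is the single topological observation that \emph{simple roots cannot disappear under a small perturbation} (implicit function theorem on the smooth set $U$), whereas new simple roots may be born from multiple roots or from the removed locus. Hence $\#\{\bold f'=0\}\le\#\{\bold f=0\}=[L_1,\dots,L_n]$. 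This is exactly the semicontinuity you identify as the ``main obstacle'' in your last paragraph; the paper treats it as a one-line fact from differential topology over $\Bbb C$ rather than something requiring a detour.

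One place where you are actually more careful than the paper: you choose $U$ admissible for both $n$-tuples simultaneously. The paper asserts that $U$ admissible for $\{L_i\}$ is automatically admissible for $\{L_i'\}$ ``since $L_i'\subset L_i$'', but while $P_{L_i'}\subset P_{L_i}$ is clear, the base loci go the other way, $B_{L_i'}\supset B_{L_i}$. Your formulation avoids this slip. In any case Theorem \ref{index} makes the choice of admissible set irrelevant, so both write-ups are easily repaired; I mention it only because your instinct there was the right one.
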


\begin{proof} \begin{enumerate} \item The intersection index is a nonnegative integer number.
\item Let $U\subset X$ be an admissible set for $\{L_i\}$. Then $U$ is an admissible set for $\{L_i'\}$, since $L_i'\subset L_i$.
    We have  $[L'_1,\dots, L'_n]=\#\{ \bold f'(x)=0|x\in U\}$,  where $\bold f'$
 is a generic vector function $ \bold f' =(f'_1,\dots, f'_n)$, such that $f'_i\in L'_i$. By a small perturbation of the vector function $\bold f'$, one can obtain a generic vector function $\bold f=(f_1,\dots,f_n)$, such that  $f_i\in L_i$. The number $\#\{ \bold f'(x)=0|x\in U\}$
 is less than or equal to the number $\#\{ \bold f(x)=0|x\in U\}$. Indeed,
under a small perturbation simple roots cannot disappear (they just move a little bit), but under a small perturbation new simple roots can be born.
\end{enumerate}
\end{proof}

\begin{lemma}  For any $n$-tuple of spaces  $L_1,\dots, L_n\in K(X)$ and for any $n$-tuple of one-dimensional spaces  $L'_1,\dots, L'_n$ we have:
\begin{equation*}[L_1, \dots, L_n]=[L_1 L'_1,\dots, L_nL'_n].\end{equation*}
\end{lemma}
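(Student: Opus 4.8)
The plan is to reduce the statement to the multilinearity and symmetry of the intersection index (Theorem \ref{Dpolar}) together with the observation that multiplying a space by a one-dimensional space is an invertible operation that does not change the intersection index. First I would observe that, by symmetry and multilinearity in each argument, it suffices to prove the single-factor case
\begin{equation*}
[L_1 L'_1, L_2, \dots, L_n] = [L_1, L_2, \dots, L_n]
\end{equation*}
for a one-dimensional space $L'_1$, and then iterate this identity $n$ times, once for each argument, to reach $[L_1 L'_1, \dots, L_n L'_n] = [L_1, \dots, L_n]$.

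For the single-factor case, write $L'_1 = \bold k\cdot g$ with $g$ a nonzero rational function, so that $L_1 L'_1 = gL_1$, the space of all functions $gf$ with $f\in L_1$. The natural candidate argument is geometric: if $\bold f = (f_1, \dots, f_n)$ is a generic vector function with $f_i \in L_i$, then $(gf_1, f_2, \dots, f_n)$ is a generic vector function for $(gL_1, L_2, \dots, L_n)$, and on a suitable admissible set $U$ (shrunk so as to avoid the zeros and poles of $g$ as well as the sets $P_{L_i}$, $B_{L_i}$ and $P_{gL_1}$, $B_{gL_1}$) the common zeros of the two vector functions coincide, since on $U$ the factor $g$ is a nonvanishing regular function and hence $gf_1$ and $f_1$ have the same simple zeros. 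One then invokes Theorem \ref{index} to conclude that the count is independent of the admissible set chosen, so the value computed on $U$ equals $[gL_1, L_2, \dots, L_n]$ and also equals $[L_1, L_2, \dots, L_n]$. Alternatively, and perhaps more cleanly, one can argue purely algebraically: by Lemma \ref{invertible}, $L'_1$ is invertible in $K(X)$, so $L'_1 (L'_1)^{-1} = \bold 1$; applying multilinearity in the first argument to $L_1 L'_1 (L'_1)^{-1} = L_1$ gives $[L_1, L_2, \dots, L_n] = [L_1 L'_1, L_2, \dots, L_n] + [(L'_1)^{-1}, L_2, \dots, L_n]$ after suitable bookkeeping, and one shows the intersection index of any one-dimensional space against anything vanishes.

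The key subsidiary fact, then, is that $[L'_1, L_2, \dots, L_n] = 0$ whenever $\dim_{\bold k} L'_1 = 1$: a one-dimensional space $L'_1 = \bold k\cdot g$ has $B_{L'_1} = \emptyset$ away from its poles, i.e. a generic (indeed the only, up to scalar) function $f_1 = g \in L'_1$ has no zeros on the admissible set $X\setminus (P_{L'_1}\cup\cdots)$, so the equation $\bold f = 0$ already has no solutions with $f_1 = 0$ on $U$, forcing the count to be $0$. With this in hand the telescoping $[L_1 L'_1, L_2, \dots, L_n] = [L_1, L_2, \dots, L_n] + [L'_1, L_2, \dots, L_n] = [L_1, L_2, \dots, L_n]$ follows from multilinearity, and iterating over all $n$ arguments (using symmetry to move each factor into the active slot) gives the lemma.

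I expect the main obstacle to be a purely technical one: making the admissible set argument in the geometric approach watertight, namely checking that shrinking $U$ to also avoid the zero and polar loci of $g$ still leaves an admissible set and does not change any of the relevant counts — but this is exactly what Theorem \ref{index} guarantees (independence of the admissible set), so there is no real difficulty. The algebraic route sidesteps even this, at the cost of the small lemma on invertible spaces, which is immediate from Lemma \ref{invertible} and the base-point observation above.
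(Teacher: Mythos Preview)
Your proposal is correct and follows essentially the same route as the paper: reduce to a single factor by multilinearity, then show $[L'_1, L_2, \dots, L_n] = 0$ whenever $\dim L'_1 = 1$. The one small difference is in how this vanishing is established: you argue directly from the definition (on an admissible set the single generator $g$ of $L'_1$ is regular and nonvanishing, so the system has no solutions), whereas the paper uses the invertibility of $L'_1$ together with nonnegativity of the index, namely $[L'_1,L_2,\dots,L_n] + [(L'_1)^{-1},L_2,\dots,L_n] = [\mathbf{1},L_2,\dots,L_n] = 0$ forces both nonnegative summands to vanish. Your direct argument is perhaps more elementary; the paper's avoids revisiting the definition and uses only the formal properties already recorded. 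One small wording issue: your claim ``$B_{L'_1} = \emptyset$ away from its poles'' is not right as stated --- $B_{L'_1}$ is precisely the zero locus of $g$ --- but the point you need is just that the admissible set is required to avoid $B_{L'_1}$, so $g$ has no zeros there, which is what you go on to use.
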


\begin{proof} Let us show that if $\dim_{\Bbb C}L'_1=1$, then $[L'_1, L_2,\dots, L_n]=0$. Indeed, the space $L'_1$ is invertible, i.e., there is $L''_1$ such that $L'_1L''_1=\bold 1$. Thus, \begin{equation*}[L'_1, L_2,\dots, L_n] +[L''_1, L_2,\dots, L_n]=0.\end{equation*} The sum of two nonnegative numbers is equal to zero only if both of them are equal to zero.
Now the lemma follows from the multi-linearity of the intersection index.
\end{proof}

The following theorem follows immediately from definitions:

\begin{theorem}[self-intersection index  and degrees]\label{selfspace} Let $Y\subset \Bbb P(L^*)$ be the Zariski closure of the image $\Phi_L(U)$  of any admissible set  $U$  for the $n$-tuple $L,\dots,L$  under the Kodaira map $\Phi_L:X\dashrightarrow \Bbb P(L^*)$. Then:
\begin{enumerate}\item if $\dim_{\Bbb C}Y<n$, them $\D(L)=0$;
\item if $\dim_{\Bbb C}Y=n$, then $\D(L)=d(\Phi_L)d(Y)$, where $d(\Phi _L)$ is the mapping degree of the map $\Phi_L:X\dashrightarrow Y$ (i.e., is equal to
the degree of the field extension $\Phi_L^*\Bbb C(Y)\subset \Bbb C(X)$), and $d(Y)$ is the degree of the subvariety $Y$ in the projective space $\Bbb P(L^*)$.
    \end{enumerate}
\end{theorem}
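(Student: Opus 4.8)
The plan is to unwind the definition of $\D(L) = [L,\dots,L]$ directly in terms of the Kodaira map $\Phi_L$. Fix an admissible set $U$ for the $n$-tuple $(L,\dots,L)$, so that all functions in $L$ are regular on $U$, they have no common zero on $U$, and $U$ consists of smooth points of $X$; hence $\Phi_L$ restricts to an honest morphism $\Phi_L\colon U \to \Bbb P(L^*)$. Let $Y = \overline{\Phi_L(U)} \subset \Bbb P(L^*)$. The key observation is that each rational function $f \in L$ corresponds, via the tautological identification, to a hyperplane $H_f \subset \Bbb P(L^*)$, and for generic $f$ the zero set $\{x \in U : f(x)=0\}$ is exactly $\Phi_L^{-1}(H_f \cap Y)$, with the simple-root condition on the differentials $df_1,\dots,df_n$ translating into transversality: the point lies in the smooth locus of $Y$, is a regular value of $\Phi_L$, and the hyperplanes $H_{f_1},\dots,H_{f_n}$ meet $Y$ transversally there. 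So $\#\{\bold f = 0 \mid x \in U\}$ counts, with the multiplicity of the map, the points of $U$ lying over the intersection $H_{f_1} \cap \dots \cap H_{f_n} \cap Y$ for a generic choice of $n$ hyperplanes.

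First I would dispose of case (1). If $\dim_{\Bbb C} Y < n$, then a generic hyperplane section drops the dimension by one (or keeps it empty), so after intersecting with $n$ generic hyperplanes from $\Bbb P(L^*)$ the intersection $H_{f_1} \cap \dots \cap H_{f_n} \cap Y$ is empty; therefore $\bold f$ has no roots in $U$ and $\D(L) = 0$. Here I would invoke the standard fact (Bertini-type / dimension count) that generic hyperplane sections behave as expected, which is legitimate because $f_1,\dots,f_n$ range over all of $L$ and the hyperplanes $H_f$ sweep out a sufficiently ample linear system on $\Bbb P(L^*)$, hence on $Y$.

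For case (2), suppose $\dim_{\Bbb C} Y = n$. Since the $H_{f_i}$ are generic hyperplanes in $\Bbb P(L^*)$, by definition of the degree of a projective variety the intersection $H_{f_1} \cap \dots \cap H_{f_n} \cap Y$ consists of exactly $d(Y)$ reduced points, all lying in the smooth locus of $Y$ and missing any given proper closed subset — in particular missing the branch locus of $\Phi_L\colon U \to Y$ and the image of $X \setminus U$. Over each such point $y$, the fiber $\Phi_L^{-1}(y) \cap U$ consists of exactly $d(\Phi_L)$ points (this is the meaning of the mapping degree as the degree of the field extension $\Phi_L^*\Bbb C(Y) \subset \Bbb C(X)$, applied away from the ramification locus), and at each of these points the pulled-back hyperplanes meet transversally because $\Phi_L$ is étale there and the $H_{f_i}$ meet $Y$ transversally. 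Counting, $\#\{\bold f = 0 \mid x \in U\} = d(\Phi_L)\, d(Y)$, which by Theorem~\ref{index} equals $\D(L)$.

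The main obstacle — the only point requiring care — is the genericity juggling: one must check that a single generic choice of $\bold f = (f_1,\dots,f_n) \in L^n$ simultaneously achieves (a) transversality of the hyperplanes $H_{f_i}$ to $Y$ and to each other, (b) avoidance of the singular locus of $Y$, the branch locus of $\Phi_L$, and the locus $\overline{\Phi_L(X \setminus U)}$, and (c) the correct count in the fibers. Each condition individually is open and dense in $L^n$ (the hyperplane sections because the $H_f$ form a base-point-free, indeed very ample, system on $\Bbb P(L^*)$ and restrict to such on $Y$; the avoidance conditions because the bad loci are proper closed subsets over which generic hyperplanes impose a codimension count that exhausts the dimension too early), so their intersection is again dense, and for any $\bold f$ in it the two sides agree. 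Once this is arranged, everything else is bookkeeping, so I would state it as "follows immediately from the definitions" after spelling out the dictionary between simple roots of $\bold f$ and transversal hyperplane sections of $Y$.
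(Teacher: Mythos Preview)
Your proposal is correct and matches the paper's approach: the paper simply asserts that the theorem ``follows immediately from definitions,'' and what you have written is precisely the unpacking of that claim via the dictionary between simple roots of $\bold f$ on $U$ and transversal hyperplane sections of $Y$ counted with the mapping degree. Your treatment is in fact more detailed than the paper's, which gives no explicit proof for this statement (though the analogous Theorem~\ref{selfdivisor} for basepoint free divisors is proved by exactly the argument you outline).
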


\subsection{Expressing the intersection index via the function $\D$}

In this section, we   present explicit formulas for the intersection index on $G_s(X)$ and $G(X)$ via
the self-intersection index $\D(L)$ on the semigroup
$K(X)$.

\begin{lemma}[Formula for intersection index]\label{formula} The intersection index can be expressed using the function $\D$ as follows:
\begin{equation*}
[L_1,\dots, L_n]=\frac{1}{n!} \sum _{1\leq k\leq n} \Bigg( \sum _{1\leq i_1\leq \dots \leq  i_k} (-1)^{n-k} \D(L_{i_1}\cdot \ldots\cdot L_{i_k})\Bigg).\end{equation*}
\end{lemma}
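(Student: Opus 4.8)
The plan is to derive this formula as a direct specialization of the general polarization formula~\eqref{eq1} for homogeneous polynomials on semigroups. By Theorem~\ref{Dpolar}, the intersection index $[L_1,\dots,L_n]$ satisfies the three defining properties of an $n$-polarization of the function $\D$: it is symmetric, it is additive in each argument with respect to the semigroup operation (multiplication of spaces), and on the diagonal it coincides with $\D$ by the very definition $\D(L)=[L,\dots,L]$. By Lemma~\ref{n!}, a function admitting an $n$-polarization has at most one such polarization, and the preceding Theorem asserts existence of the $n$-polarization of any degree $n$ homogeneous polynomial together with the explicit formula~\eqref{eq1}. Since $\D:K(X)\to\Bbb Q$ is a homogeneous polynomial of degree $n$ (Theorem~\ref{Dpolar} and the remark following it), formula~\eqref{eq1} applies to $P=\D$.

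Concretely, I would write down~\eqref{eq1} with $P$ replaced by $\D$ and choose the free parameter $x\in S$ to be the identity element $\bold 1\in K(X)$. Then each summand $P(x+a_{i_1}+\dots+a_{i_k})$ becomes $\D(\bold 1\cdot L_{i_1}\cdots L_{i_k})=\D(L_{i_1}\cdots L_{i_k})$, because $\bold 1$ is the identity of the multiplicative semigroup $K(X)$ (here the "addition" of the abstract semigroup in Section~\ref{sect2} is the multiplication of spaces, and the symbol $0$ there is $\bold 1$ here). This turns~\eqref{eq1} verbatim into the claimed identity, with the inner sum running over all multi-indices $1\le i_1\le\dots\le i_k\le n$ and the sign $(-1)^{n-k}$ and the normalization $\tfrac1{n!}$ carried over unchanged.

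The only point that requires a word of justification is that the uniqueness clause of Lemma~\ref{n!} genuinely identifies $[L_1,\dots,L_n]$ with the polarization produced by~\eqref{eq1}: one must confirm that the three properties listed in Theorem~\ref{Dpolar} are exactly conditions (1)--(3) in the definition of $n$-polarization, which they are (property (1) of the polarization — agreement on the diagonal — is the definitional equality $[L,\dots,L]=\D(L)$, property (2) is symmetry, property (3) is multilinearity with $S$-addition being multiplication of spaces). Once this matching is noted, uniqueness of the polarization forces $[L_1,\dots,L_n]$ to equal the right-hand side of~\eqref{eq1} evaluated at $(L_1,\dots,L_n)$ with $x=\bold 1$.

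I do not expect any real obstacle here: the lemma is purely a transcription of the abstract polarization formula into the notation of the semigroup $K(X)$, and the substitution $x=\bold 1$ is the natural choice that eliminates the auxiliary element. The mildest subtlety is bookkeeping — making sure the combinatorial index set $\{(i_1,\dots,i_k):1\le i_1\le\dots\le i_k\le n\}$ and the power $(-1)^{n-k}$ are copied faithfully from~\eqref{eq1} — but this is routine. One could alternatively prove the identity by direct induction on $n$ using multilinearity, but invoking~\eqref{eq1} is cleaner and is the intended argument.
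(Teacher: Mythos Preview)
Your proposal is correct and follows exactly the paper's approach: the paper's proof is the single line ``The lemma follows from Theorem~\ref{eq1},'' i.e., the general polarization formula~\eqref{eq1}, and you have simply spelled out the substitution $P=\D$, $x=\bold 1$ and the translation of additive notation into the multiplicative notation of $K(X)$. There is nothing to add.
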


\begin{proof} The lemma follows from Theorem \ref{eq1}.
\end{proof}

Since the self-intersection index $\D$ is a homogeneous polynomial on the semigroup $K(X)$, it can be extended to the Grothendieck group $G(X)$ and the extended polynomial admits an $n$-polarization.

\begin{definition} By a {\it virtual space} $\mathcal L=L/L'$, where $L, L'\in K(X)$, we mean an element  $\mathcal L$ of the Grothendieck group $G(X)$ corresponding to the pair $(L,L')$ of elements of $K(X)$, defined up to the equivalence $\sim_g$.
\end{definition}

The intersection index of virtual spaces $\mathcal L_1= L_1/L'_1,\dots, \mathcal L_n= L_n/L'_n$ we denote by $[\mathcal L_1,\dots, \mathcal L_n] $.
 The self-intersection index of the virtual space $\mathcal L=L/L'$ we denote by $\D(\mathcal L)$.  These indices  can be expressed in terms of the function $\D$ on the semigroup $K(X)$.

\begin{theorem}\label{onfracindex} The following formula holds:
\begin{equation}\label{fracindex}
   [ \mathcal L_1,\dots, \mathcal L_n] =\frac{1}{n!} \sum_{I} (-1)^{|J|} \D\bigg(\prod_{i\in I } L_i \prod _{j\in J} L'_j\bigg).
\end{equation}
\end{theorem}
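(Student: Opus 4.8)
The plan is to deduce Theorem \ref{onfracindex} directly from the general polarization machinery of section \ref{subsec2.4}, specialized to the concrete semigroup $S = K(X)$ and the concrete homogeneous polynomial $\D : K(X) \to \Bbb Q$. By Theorem \ref{Dpolar}, the self-intersection index $\D$ is a homogeneous polynomial of degree $n$ on $K(X)$ whose $n$-polarization is precisely the intersection index $[L_1,\dots,L_n]$ (here I use the characterization of polarizations by symmetry, additivity in each argument, and agreement on the diagonal, which is exactly what Theorem \ref{Dpolar} asserts). By Lemma \ref{red}, $\D$ extends uniquely to a homogeneous polynomial of degree $n$ on the Grothendieck group $G(X)$, and by the existence-and-uniqueness theorem for polarizations, that extension has a unique $n$-polarization $M\D_n : G(X)^n \to \Bbb Q$. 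The intersection index $[\mathcal L_1,\dots,\mathcal L_n]$ of virtual spaces is, by definition, the value of this $n$-polarization at the tuple $(\mathcal L_1,\dots,\mathcal L_n)$ of elements of $G(X)$.

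With this identification in place, formula (\ref{fracindex}) is literally an instance of formula (\ref{maine}) in Theorem \ref{maint}. Indeed, each virtual space $\mathcal L_i = L_i/L_i'$ corresponds to the $g$-equivalence class of the pair $(L_i, L_i') \in K(X)^2$, so $g_i = (a_i, b_i)$ with $a_i = L_i$, $b_i = L_i'$. Theorem \ref{maint} then gives
\begin{equation*}
M\D_n\big((L_1,L_1'),\dots,(L_n,L_n')\big) = \frac{1}{n!} \sum_I (-1)^{|J|}\, \D\Big(\sum_{i\in I} L_i + \sum_{j\in J} L_j'\Big),
\end{equation*}
where $I$ ranges over subsets of $I_n = \{0,\dots,n\}$ and $J = I_n \setminus I$. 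Rewriting the additive notation of section \ref{sect2} back into the multiplicative notation of the semigroup $K(X)$ — the ``sum'' $\sum_{i\in I} L_i + \sum_{j\in J} L_j'$ becomes the product $\prod_{i\in I} L_i \prod_{j\in J} L_j'$ — yields exactly (\ref{fracindex}). So the proof is essentially: invoke Theorem \ref{Dpolar} to know what $\D$ and its polarization are, invoke Lemma \ref{red} to extend to $G(X)$, and invoke Theorem \ref{maint} and substitute.

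The only point requiring a word of care — and the natural candidate for ``the main obstacle'' — is the bookkeeping around equivalence relations. In (\ref{maine}) the entries $a_i, b_i$ are only well-defined up to $\sim_t$, and the pair $(a_i,b_i)$ only up to $\sim_g$; one must check that the right-hand side of (\ref{fracindex}) is genuinely independent of the choice of representatives $L_i, L_i' \in K(X)$ of the virtual spaces $\mathcal L_i$. This is already guaranteed by the remark following formula (\ref{eq1}) (if $a_i \sim_t b_i$ then the polarization is unchanged) together with the $g$-invariance built into Theorem \ref{maint}; but for $K(X)$ specifically one should note that $\sim_t$-equivalence of spaces corresponds to having the same completion up to passing to a power, which is compatible with $\D$ by Lemma \ref{polynomial} since $\D$ is a polynomial. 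Beyond this, the argument is purely formal: no further geometry of $X$ is needed, only the polarization identities already established. I would therefore keep the proof to one or two sentences, writing: ``The theorem follows from Theorem \ref{maint} applied to the homogeneous polynomial $\D$ on $K(X)$, whose $n$-polarization is the intersection index by Theorem \ref{Dpolar}, upon rewriting the additive notation multiplicatively.''
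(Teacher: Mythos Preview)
Your proposal is correct and follows exactly the paper's approach: the paper's entire proof is the single sentence ``Theorem \ref{onfracindex} follows from Theorem \ref{maint},'' and your argument is just a careful unpacking of that sentence (identify $\D$ as the relevant degree-$n$ polynomial via Theorem \ref{Dpolar}, then apply Theorem \ref{maint} and translate the additive notation to multiplicative). Your discussion of the equivalence-relation bookkeeping is a reasonable elaboration but not something the paper spells out.
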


Theorem \ref{onfracindex} follows from Theorem \ref{maint}.

\begin{corollary}\label{onGD} The following formula holds:
\begin{equation}\label{GD}
\D(\mathcal L)=\frac{1}{n!} \sum_{0\leq k\leq n} (-1)^{n-k}{n\choose k}\D(L^k (L')^{n-k}).
\end{equation}
\end{corollary}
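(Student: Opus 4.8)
The plan is to derive Corollary \ref{onGD} directly from Theorem \ref{onfracindex} (equivalently, from Corollary \ref{ongindex}, which is itself the specialization of Theorem \ref{maint} to a repeated pair). Since $\D$ is a homogeneous polynomial of degree $n$ on the semigroup $K(X)$ (by Theorem \ref{Dpolar}), it extends uniquely to a polynomial of the same degree on the Grothendieck group $G(X)$, and $\D(\mathcal L)$ for a virtual space $\mathcal L=L/L'$ means the value of that extension at the class of the pair $(L,L')$. So the statement to prove is just the instance of Corollary \ref{ongindex} with $P=\D$, $a=L$, $b=L'$ (written multiplicatively, so $ka+(n-k)b$ becomes $L^k(L')^{n-k}$), and of course $n=\dim_{\Bbb C}X$.

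Concretely, I would apply Theorem \ref{onfracindex} with $\mathcal L_1=\dots=\mathcal L_n=\mathcal L=L/L'$, i.e. take $L_i=L$ and $L'_i=L'$ for all $i$. On the right-hand side of (\ref{fracindex}) the sum runs over all subsets $I\subseteq I_n=\{0,\dots,n\}$, so over all $2^{n+1}$... — but here one must be careful with the indexing convention: in Theorem \ref{maint} the index set has $n+1$ elements labelled $0,\dots,n$, yet the polarization is $n$-linear, so the product $\prod_{i\in I}a_i+\sum_{j\in J}b_j$ effectively involves $n$ of the variables; I would simply follow the bookkeeping already fixed in the proof of Corollary \ref{ongindex}. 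The key observation is that, with all pairs equal, the summand $\D\big(\prod_{i\in I}L_i\prod_{j\in J}L'_j\big)$ depends on $I$ and $J=I_n\setminus I$ only through their cardinalities: if $|I|=k$ then the argument is $L^k(L')^{n-k}$ (up to the shift in index count, which matches the proof of Corollary \ref{ongindex}), and the sign $(-1)^{|J|}$ becomes $(-1)^{n-k}$. Collecting the $\binom{n}{k}$ subsets of each size $k$ produces exactly the binomial sum
\begin{equation*}
\D(\mathcal L)=\frac{1}{n!}\sum_{0\le k\le n}(-1)^{n-k}\binom{n}{k}\D\big(L^k(L')^{n-k}\big),
\end{equation*}
which is (\ref{GD}).

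There is essentially no obstacle here: the corollary is a pure specialization, and the only thing requiring care is reconciling the two index conventions — the $n+1$-element index set $I_n=\{0,\dots,n\}$ of Theorem \ref{maint} versus the $n$-fold polarization — but this reconciliation has already been carried out once, in the proof of Corollary \ref{ongindex}, and the present statement is literally that corollary applied to $P=\D$. Accordingly the proof is a one-line invocation: \emph{The corollary follows from Theorem \ref{onfracindex} (equivalently, from Corollary \ref{ongindex}) by putting $\mathcal L_1=\dots=\mathcal L_n=\mathcal L=L/L'$ and grouping the $2^{n+1}$ terms of (\ref{fracindex}) according to the cardinality of the index subset, which transforms $(-1)^{|J|}\D(\prod_{i\in I}L_i\prod_{j\in J}L'_j)$ into $(-1)^{n-k}\binom{n}{k}\D(L^k(L')^{n-k})$.} If one wanted a fully self-contained argument one could instead verify directly that the right-hand side of (\ref{GD}), viewed as a function of the pair $(L,L')$, is additive in $L$ and in $L'$ and restricts to $\D$ on the diagonal $L'=\bold 1$, hence equals the unique polynomial extension — but given the machinery already in place this is unnecessary.
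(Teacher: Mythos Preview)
Your proposal is correct and matches the paper's own proof, which simply states that Corollary \ref{onGD} follows from Corollary \ref{ongindex}. Your additional remarks about grouping terms by cardinality and the index-set bookkeeping are just an unpacking of how Corollary \ref{ongindex} was itself obtained from Theorem \ref{maint}, so the approach is essentially identical.
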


Corollary \ref{onGD} follows from Corollary \ref{ongindex}.

\subsection{Intersection index of  divisors}

Let $X$ be an irreducible  projective variety and let $\Div (X)$ be the additive group of  divisors on $X$. According to the classical intersection theory of  divisors, the {\it intersection index} is defined. It assigns to an $n$-tuple of divisors $D_1,\dots,D_n\in \Div (X)$ an integer $[D_1,\dots,D_n]$ which is symmetric under permutation of divisors and multi-linear. The intersection index $[D_1,\dots,D_n]$ is the $n$-polarization of the function $\D:\Div(X)\to
\Bbb Q$ which assigns to a divisor $D$ its self-intersection index $\D(D)=[D,\dots,D]$.
The group $\Div (X)$
is the Grothendieck group of the semigroup $\Div_\mathrm{free}(X)$ of basepoint free  divisors, and the semigroup $\Div_\mathrm{free}(X)$ is isomorphic to the semigroup $K_\mathrm{reg}(X)$ (see Theorem \ref{regulargroup}).

\begin{theorem}[self-intersection on $\Div_\mathrm{free}(X)$ and degrees]\label{selfdivisor} Let $D\in \Div_\mathrm{free}(X)$, $L=L(D)$ and let  $Y\subset \Bbb P(L^*)$ be  the image $\Phi_L(X)$ of $X$ under the Kodaira map $\Phi_L:X\to \Bbb P(L^*)$.  Then:

\begin{enumerate}\item if $\dim_{\Bbb C}Y<n$, then $\D(D)=0$;
\item if $\dim_{\Bbb C}Y=n$, then $\D(D)=d(\Phi_L)d(Y)$, where $d(\Phi _L)$ is the mapping degree of the map $\Phi_L:X\to Y$ and $d(Y)$ is the degree of the subvariety $Y$ in the projective space $\Bbb P(L^*)$.
    \end{enumerate}
\end{theorem}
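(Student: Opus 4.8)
The plan is to deduce Theorem~\ref{selfdivisor} directly from Theorem~\ref{selfspace} by transporting the statement along the isomorphism between $\Div_\mathrm{free}(X)$ and the Grothendieck semigroup $G_{s,\mathrm{reg}}(X)$ supplied by Theorem~\ref{regularsemigroup}. The point is that all the relevant data---the Kodaira map, the image variety $Y$, the mapping degree, and the self-intersection index---are already identified on the two sides; what remains is to check that these identifications are compatible, and then the two cases of Theorem~\ref{selfdivisor} become literal restatements of the two cases of Theorem~\ref{selfspace}.

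First I would recall that for $D\in\Div_\mathrm{free}(X)$ the space $L=L(D)$ lies in $K_\mathrm{reg}(X)$ and, by the last sentence of Theorem~\ref{regularsemigroup}, is the largest space in $K_\mathrm{reg}(X)$ with $D(L)=D$; in particular $L$ is complete and represents, via $D:K_\mathrm{reg}(X)\to\Div_\mathrm{free}(X)$, precisely the element of $\Div_\mathrm{free}(X)$ corresponding to $D$. Next I would observe that the self-intersection index $\D(D)$ of $D$ in the classical intersection theory of divisors coincides with the self-intersection index $\D(L)$ of $L$ in the sense of section~\ref{subsec3.4}: both are the $n$-polarization of the corresponding degree-$n$ homogeneous polynomials, the map $D$ is a semigroup isomorphism onto $\Div_\mathrm{free}(X)$, and by Lemma~\ref{red} (uniqueness of polynomial extensions) this forces $\D(D)=\D(L(D))$. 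Here one uses that the homomorphism $L\mapsto D(L)$ sends the product in $K_\mathrm{reg}(X)$ to the sum of divisors and that $D(L)+(f)=\Phi_L^*(H)$ realizes the pull-back hyperplane class, so the intersection number of a generic $n$-tuple of members of $|D|$ on $X$ equals the count of simple roots in an admissible set of a generic vector function with $f_i\in L$.

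Once this dictionary is in place, the proof is immediate. Since $X$ is projective and $D$ is basepoint free, the Kodaira map $\Phi_L:X\to\Bbb P(L^*)$ is regular, so we may take the admissible set $U$ in Theorem~\ref{selfspace} to be all of $X$ (its complement need only contain the---empty---base locus and pole set), and then $Y=\overline{\Phi_L(U)}=\Phi_L(X)$ is exactly the image variety of the theorem's statement. Applying Theorem~\ref{selfspace} to the $n$-tuple $L,\dots,L$ gives $\D(L)=0$ when $\dim_{\Bbb C}Y<n$ and $\D(L)=d(\Phi_L)\,d(Y)$ when $\dim_{\Bbb C}Y=n$, with $d(\Phi_L)$ the degree of the field extension $\Phi_L^*\Bbb C(Y)\subset\Bbb C(X)$ and $d(Y)$ the degree of $Y$ in $\Bbb P(L^*)$. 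Substituting $\D(D)=\D(L)$ yields the two asserted formulas.

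The main obstacle is not in the geometry---the statement is essentially definitional once one trusts the earlier theorems---but in the bookkeeping of the identification $\D(D)=\D(L(D))$: one must be careful that the intersection index attached to divisors via classical intersection theory agrees with the topological count-of-simple-roots definition of section~\ref{subsec3.4} when restricted to $\Div_\mathrm{free}(X)\cong K_\mathrm{reg}(X)$. This is exactly the content that makes the map $\mathcal L:\Div(X)\hookrightarrow G(X)$ compatible with both intersection indices, which is asserted in section~\ref{subsec3.4} and in \cite{K-K10}; invoking it, the proof closes with no further work.
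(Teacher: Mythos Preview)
Your proposal is circular within the paper's logical structure. The identification $\D(D)=\D(L(D))$ that you invoke is precisely Corollary~\ref{self-intersection}, which in the paper is \emph{deduced from} Theorem~\ref{selfdivisor} (together with Theorem~\ref{selfspace}) by observing that the two formulas have identical right-hand sides. Likewise, the compatibility of $\mathcal L:\Div(X)\to G(X)$ with the two intersection indices---which you appeal to in your last paragraph---is the theorem immediately following Corollary~\ref{self-intersection}, whose proof rests on that corollary and hence on Theorem~\ref{selfdivisor}. So you are assuming downstream consequences of the statement you are trying to prove.

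Your attempt to justify $\D(D)=\D(L(D))$ independently via Lemma~\ref{red} does not work: that lemma concerns the unique extension of a \emph{single} polynomial from $S$ to $G_s(S)$, $G(S)$, $M(S)$; it says nothing about two a priori different polynomials (one coming from classical intersection theory of divisors, the other from counting simple roots of vector functions) being equal under a semigroup isomorphism. The sentence ``the intersection number of a generic $n$-tuple of members of $|D|$ on $X$ equals the count of simple roots\ldots'' is the only place where real content enters, and this is exactly the direct geometric computation the paper carries out: $D$ is linearly equivalent to $\Phi_L^*(H)$ for a hyperplane $H$, so $\D(D)$ counts preimages of $Y\cap P$ for a generic codimension-$n$ linear subspace $P$, giving $0$ if $\dim Y<n$ and $d(\Phi_L)\,d(Y)$ otherwise. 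That short argument \emph{is} the proof; routing it through Theorem~\ref{selfspace} and the (not-yet-established) identification adds no economy and obscures the dependency graph. A minor side issue: you cannot take $U=X$ in Theorem~\ref{selfspace} when $X$ is singular, since an admissible set must be smooth.
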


\begin{proof} It is easy to see  that the divisor $D$ is linearly  equivalent to the pull-back divisor $\Phi_L^* (H)$, where $H$ is a divisor on $Y\subset \Bbb P (L^*)$ obtained by a hyperplane section. The self-intersection of $D$ is equal to the number of pre-images in $X$ of points of intersection of $Y$ with a generic projective space of codimension $n$. The number of such intersection points is equal to the degree $d(Y)$ of projective subvariety $Y\subset \Bbb P(L^*)$. The number of preimages of each point of such intersection is equal to the mapping degree  $d(\Phi_L)$ of the map $\Phi_L:X\to Y$. Theorem is proven.
\end{proof}

\begin{corollary}\label{self-intersection}  The self-intersection index $ [D,\dots,D]$ of a   divisor $D\in \Div_\mathrm{free} (X)$ is equal to the self-intersection index $ [L,\dots, L]$ of the  space $L(D) \in K_\mathrm{reg} (X)$.
\end{corollary}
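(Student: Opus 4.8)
The plan is to reduce the self-intersection index of the divisor $D\in\Div_\mathrm{free}(X)$ to the self-intersection index of the associated space $L(D)\in K_\mathrm{reg}(X)$ by comparing both with a common geometric quantity: the degree of the image variety under the Kodaira map. First I would apply Theorem \ref{selfdivisor}, which computes $[D,\dots,D]=\D(D)$: if $Y=\Phi_L(X)\subset\Bbb P(L^*)$ has dimension less than $n$, then $\D(D)=0$; otherwise $\D(D)=d(\Phi_L)\,d(Y)$. Here $L=L(D)$ and $\Phi_L$ is the (regular, since $D$ is basepoint free) Kodaira map of $X$.

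Next I would invoke Theorem \ref{selfspace}, which gives exactly the analogous formula for the self-intersection index $\D(L)=[L,\dots,L]$ of the space $L\in K_\mathrm{reg}(X)$: taking $U$ an admissible smooth open set for the $n$-tuple $L,\dots,L$ and letting $Y'$ be the Zariski closure of $\Phi_L(U)$, one has $\D(L)=0$ when $\dim Y'<n$ and $\D(L)=d(\Phi_L)\,d(Y')$ otherwise. The only point requiring a remark is that the variety $Y'$ appearing here coincides with the $Y$ of Theorem \ref{selfdivisor}: since $D$ is basepoint free, $\Phi_L$ is a genuine morphism defined on all of $X$, so $\Phi_L(X)$ and the closure of $\Phi_L(U)$ for any dense open $U$ have the same Zariski closure, and the mapping degree $d(\Phi_L)$ — the degree of the field extension $\Phi_L^*\Bbb C(Y)\subset\Bbb C(X)$ — is insensitive to restricting to $U$. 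Hence both the dimension dichotomy and the numerical value of the two self-intersection indices are governed by the same pair $(d(\Phi_L),d(Y))$, and therefore $[D,\dots,D]=[L,\dots,L]$.

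I do not expect a serious obstacle here; the statement is essentially a matching of two already-proven degree formulas. The one item that deserves care is confirming that the Kodaira map used in Theorem \ref{selfdivisor} (where $D\in\Div_\mathrm{free}(X)$ forces regularity on all of $X$) is literally the same map, up to the irrelevant restriction to an admissible open set, as the one used in Theorem \ref{selfspace}; this is precisely the content of the remark following the definition of $D(L)$, namely that $D$ is linearly equivalent to $\Phi_L^*(H)$ for a hyperplane $H$, so that the space of sections and the associated projective embedding agree. Once that identification is in place, the corollary is immediate from comparing the two trichotomous formulas term by term.
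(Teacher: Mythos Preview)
Your proposal is correct and follows essentially the same approach as the paper's own proof: both argue that Theorems \ref{selfspace} and \ref{selfdivisor} express $\D(L)$ and $\D(D)$ by identical formulas in terms of $d(\Phi_L)$ and $d(Y)$, hence the two self-intersection indices agree. Your additional remark that the image variety $Y$ in Theorem \ref{selfdivisor} coincides with the closure $Y'$ of $\Phi_L(U)$ in Theorem \ref{selfspace} (because $\Phi_L$ is regular on all of $X$ when $D$ is basepoint free) is a helpful clarification that the paper leaves implicit, but it does not constitute a different route.
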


\begin{proof} By Theorems \ref{selfspace} and \ref{selfdivisor}, the functions $\D(L)$  and $\D (D(L))$ are given by identical formulas.
\end{proof}

\begin{theorem} The isomorphism  $\mathcal L:\Div (X)\to  G_\mathrm{reg}(X)\subset G(X)$  between the group $\Div (X)$ of  divisors on an irreducible $n$-dimensional projective variety $X$    and the group $G_\mathrm{reg}(X)$   respects the intersection indices, i.e., if  the virtual spaces $\mathcal L_1,\dots, \mathcal L_n\in G_\mathrm{reg} (X)$ correspond to the  divisors $D_1,\dots, D_n\in \Div (X)$,   then $[\mathcal L_1,\dots,\mathcal L_n]= [D_1,\dots,D_n]$.
\end{theorem}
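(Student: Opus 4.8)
The plan is to deduce the theorem from the uniqueness of polarizations, using Corollary~\ref{self-intersection} as the only input that carries geometric content. As recalled in Section~\ref{sect3}, on the semigroup level the isomorphism $\mathcal L$ is induced by the map $D\mapsto L(D)$, which identifies $\Div_\mathrm{free}(X)$ with the Grothendieck semigroup $G_{s,\mathrm{reg}}(X)$ of $K_\mathrm{reg}(X)$, and $\Div(X)$ is the Grothendieck group of $\Div_\mathrm{free}(X)$ while $G_\mathrm{reg}(X)\subset G(X)$ is the Grothendieck group of $K_\mathrm{reg}(X)$ (Theorem~\ref{regulargroup}). Since $\D$ is a polynomial on $K(X)$ (Theorem~\ref{Dpolar}), it is invariant under $\sim$ (Lemma~\ref{polynomial}) and so descends to $G_{s,\mathrm{reg}}(X)$, where its value at the class of $L(D)$ is $\D(L(D))$. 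By Corollary~\ref{self-intersection}, $\D(L(D))=\D(D)$ for every $D\in\Div_\mathrm{free}(X)$; hence the pullback polynomial $\D\circ\mathcal L$ and the polynomial $\D$ agree on the sub-semigroup $\Div_\mathrm{free}(X)$.

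Because $\Div_\mathrm{free}(X)$ generates $\Div(X)$ as its Grothendieck group, a polynomial on $\Div(X)$ is determined by its restriction to $\Div_\mathrm{free}(X)$ (Lemma~\ref{red}); applied to $\D$ and $\D\circ\mathcal L$ this gives $\D\circ\mathcal L=\D$ on all of $\Div(X)$. Now the classical intersection index $[D_1,\dots,D_n]$ is, by definition (Section~\ref{sect5}), the $n$-polarization of the homogeneous degree $n$ polynomial $\D\colon\Div(X)\to\Bbb Q$, while $[\mathcal L_1,\dots,\mathcal L_n]$ is the $n$-polarization of $\D\colon G(X)\to\Bbb Q$ (Theorem~\ref{Dpolar}, Theorem~\ref{onfracindex}). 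The function $(D_1,\dots,D_n)\mapsto[\mathcal L(D_1),\dots,\mathcal L(D_n)]$ on $\Div(X)^n$ restricts to $\D\circ\mathcal L=\D$ on the diagonal, is symmetric, and is additive in each argument (by the corresponding properties of the index on $G(X)$ together with the homomorphism property of $\mathcal L$), so it is an $n$-polarization of $\D$ on $\Div(X)$; by uniqueness of polarizations (Lemma~\ref{n!}) it therefore equals $[D_1,\dots,D_n]$. Equivalently, one can verify this by direct computation: writing $D_i=A_i-B_i$ with $A_i,B_i\in\Div_\mathrm{free}(X)$, so $\mathcal L_i=L(A_i)/L(B_i)$, formula~\eqref{fracindex} of Theorem~\ref{onfracindex} expresses $[\mathcal L_1,\dots,\mathcal L_n]$ as an alternating sum of the numbers $\D(\prod_{i\in I}L(A_i)\prod_{j\in J}L(B_j))$, which by $D(L(D))=D$ (hence $L(A)L(B)\sim L(A+B)$ in $K_\mathrm{reg}(X)$) and the $\sim$-invariance of $\D$ equal $\D(\sum_{i\in I}A_i+\sum_{j\in J}B_j)$; and formula~\eqref{maine} of Theorem~\ref{maint} with $P=\D$ on $\Div_\mathrm{free}(X)$ exhibits $[D_1,\dots,D_n]=[A_1-B_1,\dots,A_n-B_n]$ as precisely the same sum.

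The one step with real content — and the place where any difficulty would hide, if it were not already proved — is Corollary~\ref{self-intersection}, i.e.\ the equality $\D(D)=\D(L(D))$ for basepoint free $D$; its proof is the geometric comparison of Theorems~\ref{selfspace} and~\ref{selfdivisor}, both of which compute the self-intersection as $d(\Phi_L)\,d(Y)$ via the Kodaira image $Y$. Everything else — identifying $\mathcal L$ on the semigroup level, descending $\D$ through $\sim$, passing from the semigroup to its Grothendieck group, and invoking uniqueness of the polarization — is formal and already in place from Sections~\ref{sect2} and~\ref{sect3}. So, granting Corollary~\ref{self-intersection}, the argument reduces to routine bookkeeping with the polarization formulas.
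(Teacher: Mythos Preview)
Your proof is correct and follows essentially the same approach as the paper: use Corollary~\ref{self-intersection} to identify the self-intersection polynomials $\D$ on $\Div_\mathrm{free}(X)$ and $K_\mathrm{reg}(X)$ under the isomorphism, then conclude that their polarizations---the two intersection indices---coincide on the Grothendieck groups. The paper compresses this into one sentence, while you have carefully unpacked the formal machinery (descent along $\sim$, extension via Lemma~\ref{red}, uniqueness of polarization via Lemma~\ref{n!}, and even an alternative verification through formulas~\eqref{fracindex} and~\eqref{maine}); the added detail is sound but not a different method.
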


\begin{proof} Corollary \ref{self-intersection} implies that intersection indices on the  isomorphic groups $\Div (X)$ and $G_\mathrm{reg}(X)$ are the polarizations of the homogeneous polynomials $\D(L)$ and $\D (D)$ which are identified by the isomorphism of the semigroups $K_\mathrm{reg}(X)$ and $\Div_\mathrm{free}(X)$.
\end{proof}

\begin{remark} One can deduce explicit formulas for the intersection index of   divisors  via
the self-intersection index $\D (D)$ of basepoint free  divisors.
\end{remark}

\section{Convex bodies and their  mixed volumes}\label{sect6}

Consider an $n$-dimensional real vector space $\Bbb L_n$ equipped with a volume form invariant under parallel translations. The volume of a measurable set $\Delta\subset \Bbb L_n$ is denoted by $V(\Delta)$. The collection $S$  of all compact convex bodies in $\Bbb L_n$ form an additive  semigroup with respect to Minkowski addition. The semigroup $S$ has the cancellation property.  The convex body $0$ consisting of the origin in $\Bbb L_n$ is the identity element in $S$.  This semigroup $S$ is torsion free.The Grothendieck semigroup $G_s(S)$ is naturally identified with $S$.  The natural map $\tau:S\to G(S)$  from $S$ to its Grothendieck  group $G(S)$ is an embedding.   Elements of  the  group $G(S)$ (which could be considered as the formal differences of convex bodies) are called {\it virtual convex bodies}.

The function $V:S\to \Bbb R$ which assigns to each convex body $\Delta$ its volume $V(\Delta)$ is a degree $n$ homogeneous polynomial on the semigroup $S$: Minkowski discovered that for any $m$-tuple $\Delta=(\Delta_1,\dots, \Delta_m)$  of convex bodies  $\Delta_i\in S$   the function $V_\Delta(\lambda_1,\dots, \lambda_m)= V(\lambda_1\Delta_1+\dots+ \lambda_m\Delta_m)$ is a degree $n$ homogeneous polynomial on $m$-tuples $(\lambda_1,\dots,\lambda_m)$ of nonnegative real numbers  $\lambda_i $ (in particular, it is a homogeneous  degree $n$ polynomial on $m$-tuples$(k_1,\dots,k_m)$  of nonnegative integers  $k_i$).

\begin{corollary} The function $V:S\to \Bbb R$ on the semigroup of convex bodies, which assigns to $\Delta\in S$ its volume, can be extended as a homogeneous degree $n$ polynomial on the additive  group of virtual convex bodies.
\end{corollary}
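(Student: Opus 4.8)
The statement to establish is the Corollary at the very end: the volume function $V:S\to\Bbb R$ on the semigroup $S$ of compact convex bodies (with Minkowski addition) extends to a homogeneous degree $n$ polynomial on the group $G(S)$ of virtual convex bodies. The plan is to invoke the abstract machinery of Section \ref{sect2} and simply verify that its hypotheses are met. By Minkowski's theorem, already recalled in the text immediately above, $V$ is a homogeneous polynomial of degree $n$ on the semigroup $S$ in the sense of the Definition in Section \ref{subsec2.2}: for every $m$-tuple $(\Delta_1,\dots,\Delta_m)$ of convex bodies the function $(k_1,\dots,k_m)\mapsto V(k_1\Delta_1+\dots+k_m\Delta_m)$ is a homogeneous polynomial of degree $n$ in the nonnegative integers $k_i$ (it is even a polynomial in nonnegative reals, but we only need the integer statement). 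This is the one input that is genuinely geometric, and it is quoted, not proved here.

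Granting that, the extension is immediate from Lemma \ref{red}: any polynomial on $S$ is induced from a unique polynomial on the Grothendieck semigroup $G_s(S)$ and from a unique polynomial on the Grothendieck group $G(S)$ (and further to $M(S)=G(S)\otimes_{\Bbb Z}\Bbb Q$). We denote this extension again by $V$. It remains only to note that the extension is still homogeneous of degree $n$: degree is preserved under the induction in Lemma \ref{red} because the defining property of a degree-$n$ homogeneous polynomial is a condition on the functions $(k_1,\dots,k_m)\mapsto V(k_1 a_1+\dots+k_m a_m)$, which for $a_i\in S\subset G(S)$ agree with the original ones, and for general $a_i\in G(S)$ one writes $a_i$ as a difference of two elements of $S$ and uses that the polarization $MV_n$ supplied by the Theorem on polarizations (formula \eqref{eq1}) is additive in each argument, hence extends to $G(S)$ by the procedure described before Theorem \ref{maint}; Lemma \ref{n!} then identifies $MV_n$ as the $n$-polarization of the extended $V$, so $V$ is homogeneous of degree $n$ on $G(S)$.

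One should also record the features that make this case especially clean and that I would mention explicitly: $S$ has the cancellation property, so $\rho:S\to G_s(S)$ is an isomorphism and no collapsing occurs; $S$ is torsion free, so $\pi:S\to G(S)$ is an embedding and $G(S)\to M(S)$ is injective as well; hence $V$ on $S$, on $G_s(S)$, on $G(S)$, and on $M(S)$ are literally the same function under these identifications, and the resulting polynomial on virtual convex bodies restricts to the ordinary volume on genuine convex bodies. The polarization of this extension is, of course, the mixed volume $V(\Delta_1,\dots,\Delta_n)$, extended multilinearly to virtual bodies via formula \eqref{maine}.

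**Main obstacle.** There is essentially no obstacle internal to this argument: the only substantive fact, Minkowski's polynomiality theorem, is taken as given in the text. The one point that requires a sentence of care rather than a citation is the verification that "homogeneous of degree $n$" is inherited by the extension to $G(S)$ — i.e. that extending from $S$ to its Grothendieck group does not silently raise the degree or destroy homogeneity. This is handled exactly as above: the extended polarization $MV_n$ from \eqref{maine} is symmetric, additive in each argument, and restricts to $V$ on the diagonal of $S^n$, hence (by continuity of the algebraic identities, not topological continuity) on the diagonal of $G(S)^n$; Lemma \ref{n!} then forces the function $V$ on $G(S)$ to be homogeneous of degree $n$. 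So the proof is a short assembly of Lemma \ref{red}, the Theorem on existence of $n$-polarizations, and Lemma \ref{n!}.
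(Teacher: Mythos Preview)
Your proposal is correct and follows exactly the route the paper intends: the paper states this corollary without an explicit proof, treating it as an immediate consequence of Minkowski's polynomiality theorem (recalled just above the corollary) together with Lemma~\ref{red}, which guarantees the unique extension of any polynomial on $S$ to $G_s(S)$ and $G(S)$. Your additional care about preservation of the homogeneous degree is more than the paper spells out, but it is sound and in the spirit of Section~\ref{sect2}.
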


\begin{definition} The {\it  mixed volume} $MV_n$  of an $n$-tuple of convex bodies is the $n$-polarization of the volume function $V:S\to \Bbb R$  evaluated on the given n-tuple.
\end{definition}
\
\begin{lemma} The mixed volume $MV_n$  as a function  on $n$-tuples of convex bodies  $\Delta_1,\dots, \Delta_n$, $\Delta_i \subset \Bbb L_n$,  has the following properties:
\begin{enumerate}
\item (Relation with volume). On the diagonal it coincides with the volume, i.e., if $\Delta_1=\dots=\Delta_n=\Delta$, then
$MV_n(\Delta_1,\dots, \Delta_n)$ is the volume $V_n(\Delta)$ of $\Delta$.

\item  (Symmetry). $MV_n(\Delta_1,\dots, \Delta_n)$ is symmetric  with respect to permuting the bodies
$\Delta_1,\dots,\Delta_n$.

\item  (Multi-linearity). It is linear in each argument with respect to the
Minkowski sum. The linearity in the first argument means that for convex bodies $\Delta_1', \Delta_1'', \Delta_2,\dots,\Delta_n$ we have:
\begin{equation*}
MV_n(\Delta'_1 + \Delta''_1, \Delta_2,\dots, \Delta_n)= MV_n(\Delta'_1,\Delta_2,
\dots+\Delta_n)+ MV_n(\Delta''_1,\Delta_2, \dots, \Delta_n).\end{equation*}
\item (Homogeneity). It is linear homogeneous in each argument with respect to multiplication by nonnegative reals. The homogeneity in the first argument means that  for  convex bodies $\Delta_1,\Delta_2 ,\dots,\Delta_n$ and  nonnegative real number $\lambda\geq 0$ we have
$MV_n(\lambda \Delta_1,\Delta_2, \dots,\Delta_n)=\lambda MV_n(\Delta_1,\Delta_2, \dots,\Delta_n)$.

\item (Expression  in terms of volumes).
\begin{equation}\label{eq2}
MV_n(\Delta_1,\dots, \Delta_n)=\frac{1}{n!} \sum _{1\leq k\leq n} \left( \sum _{1\leq i_1\leq \dots \leq  i_k} (-1)^{n-k} V(\Delta_{i_1}+\dots+\Delta_{i_k})\right).
\end{equation}
\end{enumerate}
\end{lemma}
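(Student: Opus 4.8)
The plan is to obtain every assertion of the lemma as an instance of the general theory of homogeneous polynomials on semigroups developed in Section~\ref{subsec2.3}, applied to the semigroup $S$ of compact convex bodies under Minkowski addition and to the polynomial $V:S\to\Bbb R$. The only genuinely analytic input — Minkowski's theorem that $V(\lambda_1\Delta_1+\dots+\lambda_m\Delta_m)$ is a homogeneous polynomial of degree $n$ — has already been recalled just above the statement, so it may be taken for granted. With that input in hand, $V$ is a homogeneous polynomial of degree $n$ on $S$ in the sense of Section~\ref{subsec2.2}, and the theorem on existence and uniqueness of $n$-polarizations (the unnamed theorem following Lemma~\ref{n!}) produces a unique symmetric, multi-additive function $MV_n:S^n\to\Bbb R$ agreeing with $V$ on the diagonal. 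By definition $MV_n$ is exactly the mixed volume.

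The individual items then fall out as follows. Item~(1) (relation with volume) and item~(2) (symmetry) are literally the first two defining properties of an $n$-polarization. Item~(3) (multi-linearity with respect to Minkowski sum, i.e. additivity in each argument on $S$) is the third defining property; I would remark here that, just as in the corollary preceding this lemma, $MV_n$ extends to the group $G(S)$ of virtual convex bodies, so additivity holds there as well. Item~(4) (positive homogeneity) is deduced from item~(3): additivity in the first argument gives $MV_n(k\Delta_1,\Delta_2,\dots,\Delta_n)=k\,MV_n(\Delta_1,\dots,\Delta_n)$ for nonnegative integers $k$, hence for nonnegative rationals by the usual clearing-denominators argument, and then for all nonnegative reals $\lambda$ because, with the other $n-1$ arguments fixed, $\lambda\mapsto MV_n(\lambda\Delta_1,\Delta_2,\dots,\Delta_n)$ is a polynomial in $\lambda$ (it is a coefficient of the polynomial $V(\lambda\Delta_1+\mu_2\Delta_2+\dots+\mu_n\Delta_n)$) that vanishes to first order at $0$ and agrees with the linear function $\lambda\,MV_n(\Delta_1,\dots,\Delta_n)$ on the dense set $\Bbb Q_{\ge 0}$, forcing equality. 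Finally, item~(5) (expression in terms of volumes, formula~(\ref{eq2})) is precisely formula~(\ref{eq1}) of the polarization theorem specialized to $S=$ convex bodies, $P=V$, and the choice $x=0$ (the origin), since $0$ is the identity of $S$; one checks that with $x=0$ the term $x+\Delta_{i_1}+\dots+\Delta_{i_k}$ becomes $\Delta_{i_1}+\dots+\Delta_{i_k}$, matching~(\ref{eq2}) verbatim.

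The only place where a little care is needed — and the step I would flag as the main (minor) obstacle — is the passage from rational to real scalars in item~(4): the semigroup framework of Section~\ref{sect2} is built on nonnegative \emph{integer} combinations, so homogeneity with respect to real $\lambda$ is not formally part of the polarization package and must be bridged by the continuity/polynomiality argument sketched above, using the fact (Minkowski's theorem again) that $V$ restricted to a segment $\lambda\mapsto \lambda\Delta_1+\Delta'$ is an honest real polynomial. Everything else is a direct transcription of Section~\ref{subsec2.3} into the language of convex bodies, so no heavy computation is required.
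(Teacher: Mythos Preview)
Your proposal is correct and follows essentially the same route as the paper: items (1)--(3) from the defining properties of an $n$-polarization, and item (5) from formula~(\ref{eq1}) specialized with $x=0$ (the paper cites Corollary~\ref{ongindex} here, which appears to be a slip for formula~(\ref{eq1})). The only difference is in item (4): the paper simply says ``due to Minkowski,'' meaning that real homogeneity is read off directly from Minkowski's statement that $V(\lambda_1\Delta_1+\dots+\lambda_m\Delta_m)$ is a polynomial in \emph{real} nonnegative $\lambda_i$ (so the coefficient of $\lambda_1\cdots\lambda_n$ scales linearly under $\lambda_1\mapsto\lambda\lambda_1$), whereas you take the longer integer $\to$ rational $\to$ real bridge; both arguments rest on the same analytic input and yield the same conclusion, so there is no substantive divergence.
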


\begin{proof} Claims (1) -- (3) follow from the general properties of $n$-polarizations. Statements (4) due to Minkowski. Statement (5) follows from Corollary \ref{ongindex}.
\end{proof}

If $\Delta\subset \Bbb L_n$ is a convex body and $a\in \Bbb L_n$ is a point, then $\Delta+a$ is obtained from $\Delta$ by the parallel translation by the vector $a$. Since the volume form on $\Bbb L_n$  is invariant under parallel translations, the following corollary   from relation (\ref{eq2}) holds:

\begin{corollary} For any $n$-tuple of convex bodies $\Delta_1,\dots, \Delta_n$ and any $n$ vectors $a_1,\dots, a_n$ we have:
\begin{equation*}MV_n(\Delta_1, \dots, \Delta_n)=MV_n (\Delta_1 +a_1,\dots, \Delta_n+a_n)\end{equation*}
\end{corollary}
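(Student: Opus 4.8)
The plan is to deduce the statement from the expression (\ref{eq2}) of the mixed volume as an alternating sum of volumes of Minkowski sums, together with the invariance of the volume form on $\Bbb L_n$ under parallel translations, exactly as the phrasing preceding the corollary suggests.

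First I would record the one-line distributivity of Minkowski addition over translation: for convex bodies $\Delta,\Delta'\subset\Bbb L_n$ and vectors $a,a'\in\Bbb L_n$,
\[(\Delta+a)+(\Delta'+a')=(\Delta+\Delta')+(a+a'),\]
which is immediate from the definition of the Minkowski sum and commutativity of addition in $\Bbb L_n$. Iterating this, for each term $V(\Delta_{i_1}+\dots+\Delta_{i_k})$ occurring in (\ref{eq2}) the substitution $\Delta_i\mapsto\Delta_i+a_i$ turns the body $\Delta_{i_1}+\dots+\Delta_{i_k}$ into
\[(\Delta_{i_1}+a_{i_1})+\dots+(\Delta_{i_k}+a_{i_k})=(\Delta_{i_1}+\dots+\Delta_{i_k})+(a_{i_1}+\dots+a_{i_k}),\]
i.e. into a parallel translate of the original body (this is correct regardless of whether the indices appearing in (\ref{eq2}) are required to be distinct). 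Since the volume form is translation invariant, $V$ of this translate equals $V$ of the original body, so substituting $\Delta_i+a_i$ for $\Delta_i$ on the right-hand side of (\ref{eq2}) leaves every summand — hence the entire sum — unchanged. This gives
\[MV_n(\Delta_1+a_1,\dots,\Delta_n+a_n)=MV_n(\Delta_1,\dots,\Delta_n),\]
as asserted.

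I expect no genuine obstacle here: the only ingredients are the displayed distributivity identity and translation invariance of $V$, both completely elementary. If one wished to bypass (\ref{eq2}), an equally short alternative is to expand $MV_n(\Delta_1+\{a_1\},\dots,\Delta_n+\{a_n\})$ by the multilinearity of $MV_n$ with respect to Minkowski sums; every resulting term that has a one-point body $\{a_i\}$ in some slot $i$ vanishes, since by Lemma \ref{n!} such a mixed volume is $1/n!$ times the coefficient of the monomial $k_1\cdots k_n$ in the polynomial $V(k_1B_1+\dots+k_nB_n)$, and when $B_i=\{a_i\}$ this polynomial is, by translation invariance, independent of $k_i$. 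Either way the argument is routine, and I would present the version via (\ref{eq2}) so as to keep the surrounding exposition uniform.
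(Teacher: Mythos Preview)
Your argument is correct and follows exactly the route indicated in the paper: the corollary is stated there as an immediate consequence of formula (\ref{eq2}) and the translation invariance of the volume form, and you have simply written out that deduction. The alternative via multilinearity is also fine, but the paper's own justification is just the one-line remark you expanded.
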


The following inequalities involving mixed volumes  are easy to prove.

\begin{theorem} The mixed volume $MV_n$  is:
\begin{enumerate}

\item  nonnegative, i.e., the mixed volume of  any $n$ -tuple $\Delta_1,\dots,\Delta_n$ of convex bodies is nonnegative;
\item  monotone, i.e.,
if $\Delta_1'\subset \Delta_1, \dots, \Delta_n'\subset \Delta_n$, then
\begin{equation*} MV_n(\Delta_1',\dots,\Delta_n')\leq  MV_n(\Delta_1,\dots,\Delta_n).\end{equation*}
\end{enumerate}
\end{theorem}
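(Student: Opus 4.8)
The plan is to deduce the nonnegativity statement from the monotonicity statement, and to establish monotonicity first for convex polytopes and then for arbitrary convex bodies by approximation. The deduction is immediate: choosing points $p_i\in\Delta_i$ we have one-point sets $\{p_i\}\subseteq\Delta_i$ with $MV_n(\{p_1\},\dots,\{p_n\})=V(\{p_1+\dots+p_n\})=0$, so monotonicity gives $MV_n(\Delta_1,\dots,\Delta_n)\ge 0$. Hence it suffices to prove that $MV_n$ is monotone under inclusion in each of its arguments.

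To reduce to polytopes I would first note that $MV_n$ is continuous in each argument: by formula~(\ref{eq2}) it is a fixed integer linear combination of values of the volume $V$ on Minkowski sums of the arguments, and both Minkowski addition and the volume of a convex body are continuous in the Hausdorff metric. Now, given $\Delta_i'\subseteq\Delta_i$, approximate each $\Delta_i'$ from inside by polytopes $P_i^{(k)}\subseteq\Delta_i'$ (convex hulls of finite subsets) and each $\Delta_i$ from outside by polytopes $Q_i^{(k)}\supseteq\Delta_i$ (finite intersections of supporting half-spaces), with $P_i^{(k)}\to\Delta_i'$ and $Q_i^{(k)}\to\Delta_i$. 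Then $P_i^{(k)}\subseteq Q_i^{(k)}$ for all $i,k$, so the inequality $MV_n(P_1^{(k)},\dots,P_n^{(k)})\le MV_n(Q_1^{(k)},\dots,Q_n^{(k)})$ for polytopes passes to the limit and yields monotonicity for $\Delta_1',\dots,\Delta_n'$. It remains to treat polytopes.

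For polytopes I would induct on $n=\dim\Bbb L_n$, the hypothesis being that the $(n-1)$-dimensional mixed volume is nonnegative and monotone; the base case $n=1$, where the mixed volume of a segment is its length, is clear. The inductive step uses the classical recursive formula for the mixed volume of polytopes: with $h_P$ the support function of $P$ and $P^{u}$ the face of $P$ on which $\langle\cdot,u\rangle$ is maximal (regarded as a body in the hyperplane $u^{\perp}$),
\begin{equation*}
MV_n(P_1,\dots,P_n)=\frac1n\sum_{u} h_{P_1}(u)\,MV_{n-1}\!\left(P_2^{\,u},\dots,P_n^{\,u}\right),
\end{equation*}
where $u$ ranges over the finitely many outer facet normals of $P_2+\dots+P_n$ and $MV_{n-1}$ is computed in $u^{\perp}$. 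One obtains this by applying the elementary cone decomposition $V(K)=\frac1n\sum_F h_K(u_F)V_{n-1}(F)$ (valid for a polytope $K$ with $0$ in its interior, $F$ running over facets with unit outer normals $u_F$) to $K=\lambda_1P_1+\dots+\lambda_nP_n$ with $\lambda_i>0$, using that the facet of $K$ in a direction $u$ is $\sum_i\lambda_iP_i^{\,u}$ and that for positive $\lambda_i$ the facet normals of $K$ run over a fixed finite set, and then extracting the coefficient of $\lambda_1\cdots\lambda_n$ (translation invariance of $MV_n$ lets us assume $0\in P_i$ for each $i$). Granting the formula, its coefficients $MV_{n-1}(P_2^{\,u},\dots,P_n^{\,u})$ are $\ge 0$ by the induction hypothesis and depend only on $P_2,\dots,P_n$; translating $P_1$ and $P_1'$ by a common vector so that $0\in P_1'\subseteq P_1$ makes $0\le h_{P_1'}(u)\le h_{P_1}(u)$ for every $u$, which gives at once $MV_n(P_1',P_2,\dots,P_n)\le MV_n(P_1,P_2,\dots,P_n)$ and (taking all $P_i$ to contain $0$, so that every support-function value is nonnegative) $MV_n(P_1,\dots,P_n)\ge 0$. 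Since $MV_n$ is symmetric, the same inequality holds in each argument, and chaining it over the arguments completes the inductive step.

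The one step I expect to be genuinely non-routine is the derivation of the recursive formula for $MV_n$ on polytopes — in particular, verifying that the facets of $\lambda_1P_1+\dots+\lambda_nP_n$ whose normals are not facet normals of $P_2+\dots+P_n$ contribute only higher-order terms in the $\lambda_i$, so that extracting the multilinear coefficient produces precisely the stated $(n-1)$-dimensional mixed volumes. This is classical, going back to Minkowski, and can be cited; everything else reduces to translation invariance, multilinearity and the continuity--approximation bookkeeping already described.
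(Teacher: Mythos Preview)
Your argument is correct and follows the classical route: reduce to polytopes by continuity, then use the face-decomposition recursion $MV_n(P_1,\dots,P_n)=\tfrac{1}{n}\sum_u h_{P_1}(u)\,MV_{n-1}(P_2^{\,u},\dots,P_n^{\,u})$ together with induction on~$n$. The only point worth tightening is the derivation of the recursion itself: when you expand $nV(K)=\sum_u h_K(u)V_{n-1}(K^u)$ for $K=\sum\lambda_iP_i$ and extract the coefficient of $\lambda_1\cdots\lambda_n$, each choice of which $\lambda_j$ comes from the linear factor $h_K(u)$ contributes a term, and one must invoke the symmetry of $MV_n$ (or average over $j$) to isolate the single formula you wrote down. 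Also, the identity $nV(K)=\sum_F h_K(u_F)V_{n-1}(F)$ holds for \emph{any} polytope by the divergence theorem, so the translation to put $0$ in the interior is not needed; it suffices (for the monotonicity step) that $P_1'\subseteq P_1$ implies $h_{P_1'}\le h_{P_1}$ pointwise.

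As for comparison with the paper: there is nothing to compare. The paper states this theorem without proof, prefacing it only with the remark that these inequalities ``are easy to prove.'' Your write-up supplies exactly the standard argument that the paper elides.
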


 We conclude this section with a geometric  example  of mixed volume which uses non only the volume form on $\Bbb L_n$, but also a  Euclidean structure on it.

\begin{proposition}\label{B} Let  $B$ be the unit ball centered at the origin in $\Bbb L_n$. Then for any convex body $\Delta\subset L_n$ the mixed volume $MV_n(\Delta_1,\dots, \Delta_n)$, where $\Delta_1=\dots=\Delta_{n-1}=\Delta$ and $\Delta_n=B$, is equal to the $(n-1)$-dimensional Euclidean volume of the boundary $\partial \Delta$ of $\Delta$ multiplied by $\frac{1}{n}$.
\end{proposition}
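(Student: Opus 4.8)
The plan is to recognize $n\cdot MV_n(\Delta,\dots,\Delta,B)$ as the coefficient of the linear term of the Steiner polynomial $\mu\mapsto V(\Delta+\mu B)$, and then to identify that coefficient geometrically with the $(n-1)$-volume of $\partial\Delta$. Throughout I take $V$ to be the volume attached to the given Euclidean structure, so that $B$ is genuinely the unit ball and $\mathrm{vol}_{n-1}$ below is the induced $(n-1)$-dimensional measure.

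\emph{Step 1 (algebra): extracting the mixed volume as a Steiner coefficient.} Using the diagonal property, symmetry, multi-linearity and homogeneity of $MV_n$ (items (1)--(4) of the preceding lemma), I would expand, for every $\mu\ge 0$,
\begin{equation*}
V(\Delta+\mu B)=MV_n(\Delta+\mu B,\dots,\Delta+\mu B)=\sum_{k=0}^{n}\binom{n}{k}\,\mu^{\,n-k}\,MV_n(\underbrace{\Delta,\dots,\Delta}_{k},\underbrace{B,\dots,B}_{n-k}).
\end{equation*}
Thus $\mu\mapsto V(\Delta+\mu B)$ is a polynomial in $\mu$ with constant term $V(\Delta)$ and with coefficient of $\mu^{1}$ (the summand $k=n-1$) equal to $\binom{n}{n-1}MV_n(\Delta,\dots,\Delta,B)=n\,MV_n(\Delta,\dots,\Delta,B)$. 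In particular the one-sided derivative exists and
\begin{equation*}
n\,MV_n(\Delta,\dots,\Delta,B)=\lim_{\mu\to 0^{+}}\frac{V(\Delta+\mu B)-V(\Delta)}{\mu},
\end{equation*}
so it remains to show that the right-hand side equals $\mathrm{vol}_{n-1}(\partial\Delta)$.

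\emph{Step 2 (geometry): the collar around $\partial\Delta$.} Since $\Delta+\mu B=\{x:\mathrm{dist}(x,\Delta)\le\mu\}$ is the closed $\mu$-neighbourhood of $\Delta$, the difference $V(\Delta+\mu B)-V(\Delta)$ is the volume of the outer collar $N_\mu=(\Delta+\mu B)\setminus\Delta$. Convexity of $\Delta$ gives each $x\in N_\mu$ a unique nearest point $p(x)\in\partial\Delta$, and over a boundary point with a well-defined outward unit normal $\nu$ the fibre of $p$ is the segment of length $\mu$ issuing in direction $\nu$; this makes it evident that $V(N_\mu)\approx\mu\,\mathrm{vol}_{n-1}(\partial\Delta)$ for small $\mu$. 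To turn this into a proof I would first treat a convex polytope $P$ with facets $F_1,\dots,F_m$ and outward unit normals $u_1,\dots,u_m$: the collar $N_\mu$ decomposes, up to overlaps lying in the $\mu$-neighbourhood of the codimension-$2$ skeleton of $P$, into the right prisms $F_i+[0,\mu]u_i$ of volumes $\mu\,\mathrm{vol}_{n-1}(F_i)$, the leftover pieces having total volume $O(\mu^{2})$; hence $V(P+\mu B)=V(P)+\mu\,\mathrm{vol}_{n-1}(\partial P)+O(\mu^{2})$ and the proposition holds for polytopes. For general $\Delta$ I would pick polytopes $P_j\to\Delta$ in the Hausdorff metric and pass to the limit in $n\,MV_n(P_j,\dots,P_j,B)=\mathrm{vol}_{n-1}(\partial P_j)$, using continuity of $MV_n$ and continuity of surface area on the space of convex bodies with the Hausdorff metric (indeed one may simply take the limit in Step 1 as the definition of $\mathrm{vol}_{n-1}(\partial\Delta)$ for convex bodies, in which case this passage is automatic). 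This yields $n\,MV_n(\Delta,\dots,\Delta,B)=\mathrm{vol}_{n-1}(\partial\Delta)$, which is the assertion.

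The genuine difficulty is Step 2: controlling the collar $N_\mu$ precisely enough to identify its linear-in-$\mu$ term with surface area for bodies that are neither smooth nor polyhedral. Everything in Step 1 is formal manipulation of the polarization identity; the geometric content is the elementary prism bookkeeping for polytopes together with the standard continuity of $MV_n$ and of surface area under Hausdorff convergence of convex bodies.
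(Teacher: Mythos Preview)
The paper states Proposition~\ref{B} without proof; it is quoted as a classical fact from convex geometry (the Minkowski--Steiner formula), so there is no argument in the paper to compare against.

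Your proof is correct and is the standard one. Step~1 is a clean use of the polarization identity to read off $n\,MV_n(\Delta,\dots,\Delta,B)$ as the linear Steiner coefficient, and Step~2 identifies that coefficient with surface area: the prism decomposition for polytopes with the $O(\mu^2)$ bookkeeping near the codimension-$2$ skeleton is exactly right, and the passage to general bodies via Hausdorff continuity of both $MV_n$ and surface area is the usual way to close the argument. Your parenthetical remark that one may \emph{define} $\mathrm{vol}_{n-1}(\partial\Delta)$ by this limit is also apt, since for non-smooth, non-polyhedral convex bodies that is in fact how surface area is typically introduced (Minkowski content), making the continuity step automatic rather than an appeal to a separate theorem.
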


\subsection{Geometric inequalities involving mixed volumes}

Mixed volumes appear in many geometric inequalities. In this section, we state some of them. Probably, the famous Alexandrov--Fenchel inequality (see below) is the most important.  The classical isoperimetric inequality which was known to ancient Greeks is a very special  case of that inequality. We  state the  Minkowski criterion  of vanishing of   mixed volumes and  the Brunn--Minkowski inequality on volumes of convex bodies.

\begin{definition} A set $\{\Delta_1,\dots,\Delta_k\}$ of $k$
convex bodies in  $\Bbb L_n$ elements is {affinely dependent} if   for some nonempty subset $J\in \{1,\dots,k\}$  the body $\Delta_J=\sum_{j\in J} \Delta_j$ has dimension smaller  than the number $|J|$ of elements in the set $J$.
\end{definition}

\begin{theorem}[Minkowski criterion]\label{Mink}
 Let $\Delta_1,\dots,\Delta_k$ be a set of $k\leq n$  convex
bodies in $\Bbb L_n$.
Then the mixed volume $MV_n(\Delta_1,\dots, \Delta_k, \Delta_{k+1},\dots, \Delta_n)$ vanishes
for any collection $\Delta_{k+1},\dots,\Delta_n$ of convex bodies in $\Bbb L_n$  if and only if the set
$\Delta_1,\dots, \Delta_k$ is affinely dependent.
\end{theorem}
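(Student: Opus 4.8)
The plan is to prove the two directions separately. For the easy direction, suppose the set $\Delta_1,\dots,\Delta_k$ is affinely dependent, so that for some nonempty $J\subseteq\{1,\dots,k\}$ the body $\Delta_J=\sum_{j\in J}\Delta_j$ lies in an affine subspace of dimension $<|J|$. By multilinearity of $MV_n$ (expanding each $\Delta_j$, $j\in J$, out of a Minkowski sum repeatedly), it suffices to show $MV_n(\Delta_{j_1},\dots,\Delta_{j_{|J|}},\Delta_{k+1},\dots,\Delta_n)$ vanishes for those specific bodies; and since the $\Delta_j$ for $j\in J$ are summands of a body of dimension $<|J|$, each of them lies in a translate of the same $(<|J|)$-dimensional subspace $E$. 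Using translation invariance of $MV_n$ (the corollary to \eqref{eq2}), we may assume all these bodies lie in $E$ itself. Now approximate each remaining body $\Delta_i$ ($i>k$) from inside by a segment parallel to a coordinate direction, or more precisely use monotonicity together with the additivity over a basis: write each $\Delta_i$ as contained in a sum of segments spanning $\Bbb L_n$; expanding by multilinearity, every term of $MV_n$ is (up to a positive factor) the mixed volume of $|J|$ bodies lying in $E$ together with at most $n-|J|$ segments, so the whole configuration is supported in a subspace of dimension $\le \dim E + (n-|J|) < n$, forcing the volume — hence the mixed volume term — to be zero. Summing, $MV_n=0$.

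For the converse, suppose $\Delta_1,\dots,\Delta_k$ is \emph{not} affinely dependent; I must produce $\Delta_{k+1},\dots,\Delta_n$ with $MV_n(\Delta_1,\dots,\Delta_n)>0$. The hypothesis says that for every nonempty $J$, $\dim\Delta_J\ge|J|$; in particular taking $J=\{1,\dots,k\}$ gives $\dim(\Delta_1+\dots+\Delta_k)\ge k$. The natural choice is to take $\Delta_{k+1}=\dots=\Delta_n=B$, the unit ball (or, to stay purely affine, suitable segments or a cube of full dimension), and try to show $MV_n(\Delta_1,\dots,\Delta_k,B,\dots,B)>0$. To establish positivity I would argue by induction on $k$, or pass to the lower-dimensional face structure: replacing each $\Delta_j$ by a small simplex $\Delta_j'\subseteq\Delta_j$ chosen so that $\Delta_1'+\dots+\Delta_k'$ still has dimension $k$ — possible because one can pick $k$ vectors $v_1,\dots,v_k$ with $v_j$ an edge direction of $\Delta_j$ and $v_1,\dots,v_k$ linearly independent, using the dimension hypothesis to select them greedily. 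By monotonicity it then suffices to prove positivity for these simplices, and $MV_n$ of $k$ one-dimensional segments in independent directions together with $n-k$ generic segments filling out the complement is, up to a combinatorial constant, the volume of the parallelepiped they span, which is strictly positive.

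The main obstacle is the greedy selection of independent edge directions in the converse: one must check that the combinatorial hypothesis ``$\dim\Delta_J\ge|J|$ for all nonempty $J$'' is exactly what is needed to run a matroid-style exchange argument producing, for each $j$, an edge vector $v_j$ of $\Delta_j$ such that $\{v_1,\dots,v_k\}$ is linearly independent — this is essentially a transversality/Hall-type statement for the linear matroid generated by the edge directions of the $\Delta_j$. Once that selection is available, the reduction to the volume of a parallelepiped via monotonicity and multilinearity is routine, and the expression \eqref{eq2} guarantees the combinatorial constant is positive. I would also remark that the statement, and this proof, are the convex-geometric shadow of the Minkowski-type condition for basepoint-free linear systems discussed in section \ref{sect9}; the argument there runs in parallel, with ``affinely dependent'' replaced by a dimension-count on images of Kodaira maps.
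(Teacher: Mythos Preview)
The paper does not prove Theorem \ref{Mink}; it is stated in section~\ref{sect6} as a classical result of Minkowski, with no argument given (the analogous divisorial statement in section~\ref{sect9} is likewise deferred to \cite{K-K16}). So there is no proof in the paper to compare yours against.

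Your outline is the standard one and is essentially correct, but two points in the easy direction need repair. First, the opening ``reduction'' is garbled: the expression $MV_n(\Delta_{j_1},\dots,\Delta_{j_{|J|}},\Delta_{k+1},\dots,\Delta_n)$ has only $|J|+(n-k)$ entries, which is $<n$ when $|J|<k$, so it is not a mixed volume at all; and there is nothing to ``expand out of a Minkowski sum'' at that stage, since the $\Delta_j$ are not given as sums. Second, when you do pass to boxes you replace only the bodies $\Delta_i$ with $i>k$, forgetting those with $i\in\{1,\dots,k\}\setminus J$; so the terms you obtain still contain $k-|J|$ arbitrary bodies and your dimension count ``$\dim E+(n-|J|)<n$'' does not apply to them. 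The fix is straightforward: after translating so that each $\Delta_j$, $j\in J$, lies in the linear subspace $E$, bound \emph{every} body with index $i\notin J$ (whether $i\le k$ or $i>k$) by a large cube, write that cube as a sum of $n$ coordinate segments, and expand by multilinearity. Each resulting term is then a mixed volume of $|J|$ bodies in $E$ together with $n-|J|$ segments; their Minkowski sum lies in a subspace of dimension at most $\dim E+(n-|J|)<n$, and monotonicity (bounding each argument by this common sum) forces the term to vanish.

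For the converse your plan is correct. The set of chord directions of $\Delta_j$ spans a linear space of dimension $\dim\Delta_j$, and $\dim\Delta_J$ is precisely the rank, in the linear matroid on $\Bbb L_n$, of the union of these direction sets over $j\in J$. Hence the hypothesis ``$\dim\Delta_J\ge|J|$ for all $J$'' is exactly Rado's condition, and Rado's theorem produces segments $I_j\subset\Delta_j$ (up to translation) in linearly independent directions $v_1,\dots,v_k$. Completing to a basis $v_1,\dots,v_n$ and taking $\Delta_{k+1},\dots,\Delta_n$ to be segments in the directions $v_{k+1},\dots,v_n$, monotonicity and translation invariance give
\[
MV_n(\Delta_1,\dots,\Delta_n)\ \ge\ MV_n(I_1,\dots,I_n)\ =\ \tfrac{1}{n!}\,|\det(v_1,\dots,v_n)|\ >\ 0,
\]
which is what you need.
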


\begin{theorem}[Brunn--Minkowski inequality]\label{B-M} If $\Delta_1,\Delta_2$ are convex bodies in $\Bbb L_n$, then
\begin{equation}\label{eq3}
V ^{\frac{1}{n}}(\Delta_1) + V ^{\frac{1}{n}}(\Delta_2) \leq V ^{\frac{1}{n}}(\Delta_1+\Delta_2).
\end{equation}

\end{theorem}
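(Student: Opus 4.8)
The plan is to prove (\ref{eq3}) by the classical box argument of Hadwiger and Ohmann, after two easy reductions. If $V(\Delta_1)=0$, then (\ref{eq3}) reads $V(\Delta_2)^{1/n}\le V(\Delta_1+\Delta_2)^{1/n}$, which is immediate from translation invariance of the volume form together with monotonicity of $V$: for any point $a\in\Delta_1$ we have $a+\Delta_2\subset\Delta_1+\Delta_2$, so $V(\Delta_2)=V(a+\Delta_2)\le V(\Delta_1+\Delta_2)$. Thus I may assume both bodies are full-dimensional. Next, a linear identification $\Bbb L_n\cong\Bbb R^n$ turns the given translation-invariant volume form into a positive constant times Lebesgue measure; since both sides of (\ref{eq3}) scale the same way under rescaling the measure, I may take $\Bbb L_n=\Bbb R^n$ with Lebesgue measure and speak of axis-parallel boxes (products of bounded intervals). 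Finally, every convex body can be approximated from inside, in volume, by finite unions of boxes with pairwise disjoint interiors, and if $A\subset\Delta_1$, $B\subset\Delta_2$ are such unions then $A+B\subset\Delta_1+\Delta_2$, so $V(A+B)\le V(\Delta_1+\Delta_2)$ while $V(A)\to V(\Delta_1)$, $V(B)\to V(\Delta_2)$. Hence it suffices to prove (\ref{eq3}) for finite unions of boxes with pairwise disjoint interiors, by induction on the total number of boxes.

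In the base case $\Delta_1$ and $\Delta_2$ are single boxes, with side lengths $a_1,\dots,a_n$ and $b_1,\dots,b_n$; then $\Delta_1+\Delta_2$ is the box with side lengths $a_i+b_i$, and dividing (\ref{eq3}) by $\prod_{i=1}^n(a_i+b_i)^{1/n}$ reduces it to
\begin{equation*}
\prod_{i=1}^{n}\left(\frac{a_i}{a_i+b_i}\right)^{1/n}+\prod_{i=1}^{n}\left(\frac{b_i}{a_i+b_i}\right)^{1/n}\le 1 .
\end{equation*}
This follows by bounding each geometric mean on the left by the corresponding arithmetic mean and noting that the two arithmetic means add up to $\frac{1}{n}\sum_{i=1}^{n}\frac{a_i+b_i}{a_i+b_i}=1$.

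For the inductive step, suppose $\Delta_1$ (say) is a union of $N\ge 2$ boxes, among them two boxes $R$, $R'$ with disjoint interiors. For some coordinate, taken to be $x_n$, the projections of $R$ and $R'$ to that axis have disjoint interiors, so there is a $c$ with $R\subset\{x_n\le c\}$ and $R'\subset\{x_n\ge c\}$. Put $\Delta_1'=\Delta_1\cap\{x_n\le c\}$ and $\Delta_1''=\Delta_1\cap\{x_n\ge c\}$: both are nonempty finite unions of boxes, and each uses at most $N-1$ boxes, since a box of $\Delta_1$ lying on one side of $\{x_n=c\}$ stays intact while one straddling it splits into two, and at least one box ($R'$, resp. $R$) lies wholly on the far side. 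Now pick $c'$ so that $\{x_n=c'\}$ cuts $\Delta_2$ into $\Delta_2'=\Delta_2\cap\{x_n\le c'\}$ and $\Delta_2''$ with $V(\Delta_2')=\theta V(\Delta_2)$, where $\theta=V(\Delta_1')/V(\Delta_1)\in(0,1)$; this is possible since $c'\mapsto V(\Delta_2\cap\{x_n\le c'\})$ is continuous and runs from $0$ to $V(\Delta_2)$. Each of the pairs $(\Delta_1',\Delta_2')$ and $(\Delta_1'',\Delta_2'')$ involves strictly fewer boxes than $(\Delta_1,\Delta_2)$, and $\Delta_1'+\Delta_2'\subset\{x_n\le c+c'\}$, $\Delta_1''+\Delta_2''\subset\{x_n\ge c+c'\}$ have disjoint interiors, so
\begin{equation*}
V(\Delta_1+\Delta_2)\ge V(\Delta_1'+\Delta_2')+V(\Delta_1''+\Delta_2'') .
\end{equation*}
Applying the induction hypothesis to each summand and using $V(\Delta_i')=\theta V(\Delta_i)$, $V(\Delta_i'')=(1-\theta)V(\Delta_i)$ together with homogeneity of $V$,
\begin{equation*}
V(\Delta_1+\Delta_2)\ge\big(\theta+(1-\theta)\big)\big(V(\Delta_1)^{1/n}+V(\Delta_2)^{1/n}\big)^{n}=\big(V(\Delta_1)^{1/n}+V(\Delta_2)^{1/n}\big)^{n},
\end{equation*}
which is (\ref{eq3}).

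I do not expect a genuine obstacle: the only real inequality invoked is the arithmetic--geometric mean inequality, and everything else is bookkeeping. The one point to get right is the box count in the inductive step --- that splitting a straddling box in two does not destroy the strict decrease, because each side still contains an uncut box of $\Delta_1$. An alternative is an induction on dimension via Brunn's slicing theorem, applied to the convex body $\operatorname{conv}\big((\Delta_1\times\{0\})\cup(\Delta_2\times\{1\})\big)$, whose section at height $t$ equals $(1-t)\Delta_1+t\Delta_2$; this is essentially no shorter. If desired, following the equality case through the arithmetic--geometric mean step shows that equality in (\ref{eq3}) forces $\Delta_1$ and $\Delta_2$ to be homothetic (or one of them to be a point).
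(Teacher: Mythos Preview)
Your argument is correct; this is the classical Hadwiger--Ohmann proof, and the bookkeeping in the inductive step is handled properly (the induction is on the total number of boxes in the pair, and the hyperplane is chosen so that at least one full box of $\Delta_1$ lies strictly on each side, which is what guarantees the strict decrease even though boxes of $\Delta_2$ may get split).

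However, it bears no resemblance to what the paper does. The paper gives no proof at the point where Theorem~\ref{B-M} is stated; it simply remarks that the inequality ``has visual geometric proofs.'' The proof the paper actually supplies is indirect and is the whole point of the article: one takes Brunn--Minkowski in dimension~$2$ as input (equivalently, the planar Alexandrov--Fenchel inequality, via Theorem~\ref{B-MA-F}), feeds it into the Newton--Okounkov machinery to get the Hodge-type and Alexandrov--Fenchel-type inequalities for intersection indices (Theorems~\ref{hodge}, \ref{strongKht}), transfers these back to mixed volumes via the BKK theorem and approximation (Section~\ref{sect8}), and finally recovers Brunn--Minkowski in all dimensions as the case $m=n$ of Corollary~\ref{AFcor2}. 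Your route is elementary and self-contained, needing only the arithmetic--geometric mean inequality; the paper's route is deliberately non-elementary, trading a short combinatorial proof for a demonstration that the $n$-dimensional geometric inequality is governed by intersection theory on algebraic varieties, with only the $2$-dimensional case required as a geometric seed.
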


The Brunn--Minkowski inequality has visual  geometric proofs. It was discovered  by Brunn in an equivalent, but very different form. Minkowski found its present form  and described  conditions under which the inequality becomes an equality.

\begin{theorem}[Alexandrov--Fenchel inequality]\label{A-F}
For any $n$-tuple of convex bodies
$\Delta_1,\Delta_2,\dots,\Delta_n$ in $\Bbb L_n$ the following inequality holds:
\begin{equation}\label{eq4}
MV_n^2(\Delta_1,\Delta_2, \Delta_3,\dots,\Delta_n)\leq
MV_n(\Delta_1,\Delta_1, \Delta_3,\dots,\Delta_n)\times
MV_n(\Delta_1,\Delta_1, \Delta_3,\dots,\Delta_n).
\end{equation}

\end{theorem}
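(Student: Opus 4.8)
We prove the Alexandrov--Fenchel inequality in its standard form, namely that for convex bodies $\Delta_1,\dots,\Delta_n$ in $\Bbb L_n$
\begin{equation*}
MV_n(\Delta_1,\Delta_2,\Delta_3,\dots,\Delta_n)^2 \ \ge\ MV_n(\Delta_1,\Delta_1,\Delta_3,\dots,\Delta_n)\cdot MV_n(\Delta_2,\Delta_2,\Delta_3,\dots,\Delta_n).
\end{equation*}
The plan is to deduce this from its algebraic counterpart, the Khovanskii--Teissier inequality for the bi-rationally invariant intersection index, by means of the BKK theorem. Since the asserted inequality is unchanged if we rescale the volume form or apply an invertible linear change of coordinates (both sides then acquire the same factor), we may assume $\Bbb L_n=\Bbb R^n$ with its standard lattice $\Bbb Z^n$.

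First I would reduce to the case where $\Delta_1,\dots,\Delta_n$ are convex integral polytopes. The mixed volume is continuous on $n$-tuples of convex bodies in the Hausdorff metric --- this follows from formula (\ref{eq2}), the continuity of the volume $V$ under Hausdorff convergence, and the continuity of Minkowski addition --- and every convex body is a Hausdorff limit of polytopes with rational vertices; so a multiplicative inequality valid for all rational polytopes passes to the limit. A rational polytope becomes an integral polytope after dilation by a suitable positive integer $N$, and since $MV_n$ is homogeneous of degree one in each argument, simultaneous dilation of all the $\Delta_i$ by $N$ multiplies both sides of the inequality by $N^{2n}$. Hence it suffices to treat integral polytopes.

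Next, to an integral polytope $\Delta_i$ I associate the finite set $A_i=\Delta_i\cap\Bbb Z^n$, which satisfies $\Delta(A_i)=\Delta_i$, and the space $L_{A_i}\in K_n\subset K((\bold k^*)^n)$ spanned by the corresponding characters. By the BKK theorem (Section \ref{subsec7.2}), for every choice of $n$ indices the intersection index $[L_{A_{i_1}},\dots,L_{A_{i_n}}]$ on $X=(\bold k^*)^n$ equals $MV_n(\Delta_{i_1},\dots,\Delta_{i_n})$ up to a universal factor depending only on $n$ (namely $n!$ with the present normalization of $MV_n$). Therefore, after cancelling this factor, the inequality to be proved for the $\Delta_i$ is exactly the Khovanskii--Teissier inequality
\begin{equation*}
[L_{A_1},L_{A_2},L_{A_3},\dots,L_{A_n}]^2 \ \ge\ [L_{A_1},L_{A_1},L_{A_3},\dots,L_{A_n}]\cdot[L_{A_2},L_{A_2},L_{A_3},\dots,L_{A_n}]
\end{equation*}
applied to the spaces $L_{A_i}\in K(X)$, which is established in Section \ref{sect7}. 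Unwinding this correspondence and letting the approximating rational polytopes tend to $\Delta_1,\dots,\Delta_n$ yields the geometric inequality.

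The genuine content lies in the algebraic inequality invoked in the last step, so I expect the main obstacle to be the Khovanskii--Teissier inequality itself rather than the translation. Its proof runs through the Newton--Okounkov body construction: the self-intersection index $\D$ on $K(X)$ is realized as the volume of an associated convex body, which reduces the multiplicative inequality to a Brunn--Minkowski-type statement, and then --- by cutting the remaining factors $L_3,\dots,L_n$ down one at a time (a Bertini-type genericity argument compatible with the intersection index) --- to a two-dimensional inequality on planar convex bodies of the same shape as the classical isoperimetric inequality; everything downstream of that planar inequality is the purely algebraic bookkeeping of the polarization formalism of Section \ref{sect2}. Within the present reduction the only points requiring care are the continuity and density statements for convex bodies and the verification that $A_i=\Delta_i\cap\Bbb Z^n$ genuinely has $\Delta_i$ as its convex hull, so that BKK applies verbatim.
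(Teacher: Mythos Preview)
Your proposal is correct and follows essentially the same route as the paper's proof in Section~\ref{sect8}: reduce to integral polytopes via Hausdorff continuity and homogeneity, then invoke the BKK theorem to translate the mixed-volume inequality into the algebraic Alexandrov--Fenchel inequality for the spaces $L_{A_i}\in K((\Bbb C^*)^n)$, which is supplied by Corollary~\ref{algAF}. Your closing paragraph also accurately summarizes how the paper establishes that algebraic inequality in Section~\ref{sect7}.
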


The Alexandrov--Fenchel inequality is one of the most  general inequalities between mixed volumes of convex bodies. It has many corollaries. Below are two examples.

\begin{corollary}\label{AFcor1} For any $2\leq m\leq n$ and for any   $n$-tuple of convex bodies $\Delta_1,\dots, \Delta_n$ we have:
\begin{equation}\label{cor1}
 \prod_{1\leq i\leq m} MV_n (\Delta_i,\dots,\Delta_i, \Delta_{m+1}, \dots, \Delta_n)\leq MV_n (\Delta_1,\dots,\Delta_n)^m.
 \end{equation}
\end{corollary}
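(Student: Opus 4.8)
The plan is to derive Corollary~\ref{AFcor1} from the Alexandrov--Fenchel inequality (Theorem~\ref{A-F}), used in its standard form
\[
MV_n(\Delta_1,\Delta_2,\Delta_3,\dots,\Delta_n)^2\ \ge\ MV_n(\Delta_1,\Delta_1,\Delta_3,\dots,\Delta_n)\cdot MV_n(\Delta_2,\Delta_2,\Delta_3,\dots,\Delta_n),
\]
by an induction on $m$. Throughout, I would fix the block $\mathcal C=(\Delta_{m+1},\dots,\Delta_n)$ of $n-m$ bodies and abbreviate a mixed volume whose remaining $m$ slots are filled by bodies $A_1,\dots,A_k$ with multiplicities $a_1+\dots+a_k=m$ by writing those bodies with exponents, e.g.\ $[\Delta_1^2\Delta_3\cdots\Delta_m]:=MV_n(\Delta_1,\Delta_1,\Delta_3,\dots,\Delta_m,\Delta_{m+1},\dots,\Delta_n)$ and $[\Delta_i^m]:=MV_n(\Delta_i,\dots,\Delta_i,\Delta_{m+1},\dots,\Delta_n)$; the same bracket convention will also be used below with a larger fixed block. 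In this notation the assertion of the corollary reads $\prod_{i=1}^m[\Delta_i^m]\le[\Delta_1\cdots\Delta_m]^m$.

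The first step is a sub-lemma packaging Alexandrov--Fenchel into a convenient form: for convex bodies $A,B$, an integer $r\ge1$, and a fixed block $\mathcal B$ of $n-r$ bodies, the numbers $c_t:=MV_n(\underbrace{A,\dots,A}_{r-t},\underbrace{B,\dots,B}_{t},\mathcal B)$, $0\le t\le r$, satisfy $c_t^2\ge c_{t-1}c_{t+1}$ for $0<t<r$; this is precisely Alexandrov--Fenchel with the two distinguished slots carrying $A$ and $B$ and the other $n-2$ slots carrying $A^{r-t-1}B^{t-1}$ together with $\mathcal B$. Discrete log-concavity of $(c_t)$ then yields $c_t\ge c_0^{1-t/r}c_r^{t/r}$: when all $c_t$ are positive the ratios $c_t/c_{t-1}$ are nonincreasing, whence $(c_r/c_0)^{1/r}\le c_1/c_0$ and one iterates; if some $c_t$ vanishes, the inequality we ultimately need becomes trivial, or can be recovered by approximating the bodies by smooth strictly convex ones.

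Then I would run the induction on $m$. The case $m=2$ is the Alexandrov--Fenchel inequality $[\Delta_1^2][\Delta_2^2]\le[\Delta_1\Delta_2]^2$. Assuming the corollary for $m-1$ (in dimension $n$, for every fixed block of the appropriate size), applying Alexandrov--Fenchel with distinguished slots $\Delta_1,\Delta_2$ and remaining slots $\Delta_3,\dots,\Delta_m,\mathcal C$ gives
\[
[\Delta_1\cdots\Delta_m]^2\ \ge\ [\Delta_1^2\Delta_3\cdots\Delta_m]\,[\Delta_2^2\Delta_3\cdots\Delta_m].
\]
Moving one copy of $\Delta_1$ into the fixed block, $[\Delta_1^2\Delta_3\cdots\Delta_m]$ becomes the mixed volume of the $m-1$ bodies $\Delta_1,\Delta_3,\dots,\Delta_m$ over the block $\mathcal C'=(\Delta_1,\Delta_{m+1},\dots,\Delta_n)$ of size $n-(m-1)$, and the induction hypothesis gives
\[
[\Delta_1^m]\cdot\prod_{j=3}^m[\Delta_1\Delta_j^{m-1}]\ \le\ [\Delta_1^2\Delta_3\cdots\Delta_m]^{m-1}.
\]
By the sub-lemma (with $A=\Delta_j$, $B=\Delta_1$, $r=m$, block $\mathcal C$) each factor $[\Delta_1\Delta_j^{m-1}]$ is at least $[\Delta_j^m]^{(m-1)/m}[\Delta_1^m]^{1/m}$; taking the product over the $m-2$ values of $j$ and substituting, the exponents of $[\Delta_1^m]$ total $2(m-1)/m$, and one gets $[\Delta_1^2\Delta_3\cdots\Delta_m]\ge[\Delta_1^m]^{2/m}(\prod_{j=3}^m[\Delta_j^m])^{1/m}$. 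The symmetric estimate holds for $[\Delta_2^2\Delta_3\cdots\Delta_m]$; multiplying the two and inserting into the displayed Alexandrov--Fenchel inequality yields $[\Delta_1\cdots\Delta_m]^2\ge(\prod_{i=1}^m[\Delta_i^m])^{2/m}$, i.e.\ $[\Delta_1\cdots\Delta_m]^m\ge\prod_{i=1}^m[\Delta_i^m]$, which is the corollary for $m$.

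I expect the only delicate point to be the bookkeeping in the inductive step: one must verify that, after absorbing a copy of $\Delta_1$ (resp.\ $\Delta_2$) into the fixed block, the quantity $[\Delta_1^2\Delta_3\cdots\Delta_m]$ is literally a mixed volume of $m-1$ free bodies over a block of the correct size $n-(m-1)$, so the induction hypothesis applies verbatim, and that the $m-2$ exponent contributions from the sub-lemma combine to exactly $2(m-1)/m$ so that raising to the power $1/(m-1)$ produces the clean exponent $2/m$. The only other subtlety is the degenerate case where some mixed volume involved vanishes, which is routine since a vanishing factor on either side makes the relevant inequality automatic, and otherwise all quantities are positive and the log-concavity argument applies as stated.
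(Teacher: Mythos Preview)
Your derivation is correct; the exponent bookkeeping in the inductive step checks out exactly as you indicate, and the degenerate case is indeed trivial because a vanishing factor $[\Delta_i^m]$ makes the left-hand side of (\ref{cor1}) zero. The paper itself gives no proof of this corollary: it simply lists (\ref{cor1}) and (\ref{cor2}) as ``two examples'' of formal consequences of the Alexandrov--Fenchel inequality, so there is no argument in the paper to compare yours with.

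That said, your induction can be streamlined. Instead of splitting off $\Delta_1$ and $\Delta_2$ separately via a top-level application of Alexandrov--Fenchel, you can absorb $\Delta_m$ into the fixed block and apply the induction hypothesis once to the $m-1$ free bodies $\Delta_1,\dots,\Delta_{m-1}$ over $(\Delta_m,\mathcal C)$, obtaining
\[
\prod_{i=1}^{m-1}[\Delta_i^{m-1}\Delta_m]\ \le\ [\Delta_1\cdots\Delta_m]^{m-1}.
\]
Your own sub-lemma with $A=\Delta_i$, $B=\Delta_m$, $r=m$ then gives $[\Delta_i^{m-1}\Delta_m]\ge[\Delta_i^m]^{(m-1)/m}[\Delta_m^m]^{1/m}$, and multiplying over $i=1,\dots,m-1$ yields $[\Delta_1\cdots\Delta_m]^{m-1}\ge\bigl(\prod_{i=1}^m[\Delta_i^m]\bigr)^{(m-1)/m}$, which is the claim after raising to the power $m/(m-1)$. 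This avoids the two parallel estimates for $[\Delta_1^2\Delta_3\cdots\Delta_m]$ and $[\Delta_2^2\Delta_3\cdots\Delta_m]$, but the content is the same and your version is perfectly valid.
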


For $m=2$, inequality (\ref{cor1}) coincides with the inequality (\ref{eq4}). For $m=n$, inequality (\ref{cor1}) is symmetric in  convex bodies $\Delta_1,\dots,\Delta_n$.

\begin{corollary}\label{AFcor2}  For any $2\leq m\leq n$ and for any collection of convex bodies $\Delta_1, \Delta_2$, $\Delta_{m+1},\dots,\Delta_n$ we have:
\begin{equation}\label{cor2}\begin{aligned}
MV_n^{\frac{1}{m}}(\Delta_1,\dots,\Delta_1,\Delta_{m+1},\dots,\Delta_n)+ MV_n^{\frac{1}{m}}(\Delta_2,\dots,\Delta_2,\Delta_{m+1},\dots,\Delta_n)\\
\leq MV_n^{\frac{1}{m}}(\Delta_1+\Delta_2,\dots,\Delta_1+\Delta_2,\Delta_{m+1},\dots,\Delta_n).\end{aligned}
\end{equation}
\end{corollary}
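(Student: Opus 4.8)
The plan is to derive the inequality from the Alexandrov--Fenchel inequality (Theorem~\ref{A-F}, equivalently Corollary~\ref{AFcor1}) by the classical argument that combines log-concavity of a sequence of mixed volumes with the binomial theorem. Keeping the bodies $\Delta_{m+1},\dots,\Delta_n$ fixed, I would set, for $0\le k\le m$,
\begin{equation*}
a_k=MV_n(\underbrace{\Delta_1,\dots,\Delta_1}_{k},\underbrace{\Delta_2,\dots,\Delta_2}_{m-k},\Delta_{m+1},\dots,\Delta_n),
\end{equation*}
so that $a_m$ and $a_0$ are the two mixed volumes on the left-hand side of (\ref{cor2}), and write $c$ for the mixed volume on its right-hand side. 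Expanding each of the first $m$ arguments of $c$ by the multi-linearity of $MV_n$ gives
\begin{equation*}
c=MV_n(\underbrace{\Delta_1+\Delta_2,\dots,\Delta_1+\Delta_2}_{m},\Delta_{m+1},\dots,\Delta_n)=\sum_{k=0}^{m}\binom{m}{k}a_k.
\end{equation*}

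The heart of the argument is the chain of log-concavity inequalities $a_k^2\ge a_{k-1}a_{k+1}$ for $1\le k\le m-1$. To obtain the $k$-th one I would apply the Alexandrov--Fenchel inequality (in the form of Corollary~\ref{AFcor1} with its parameter equal to $2$) to the pair of bodies $\Delta_1,\Delta_2$, taking for the remaining $n-2$ arguments the collection consisting of $k-1$ copies of $\Delta_1$, $m-k-1$ copies of $\Delta_2$, and the tail $\Delta_{m+1},\dots,\Delta_n$; this is a legitimate choice precisely because $1\le k\le m-1$, and counting multiplicities turns the inequality into $a_k^2\ge a_{k-1}a_{k+1}$. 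Since the $a_k$ are nonnegative, a short induction then shows that either one of the boundary terms $a_0,a_m$ vanishes, or all $a_k$ are strictly positive, in which case the log-concavity relations give $a_k\ge a_0^{(m-k)/m}a_m^{k/m}$ for every $k$. Substituting this bound into the expansion of $c$ and applying the binomial theorem yields
\begin{equation*}
c\ \ge\ \sum_{k=0}^{m}\binom{m}{k}a_0^{(m-k)/m}a_m^{k/m}=\bigl(a_0^{1/m}+a_m^{1/m}\bigr)^{m},
\end{equation*}
and taking $m$-th roots is exactly inequality (\ref{cor2}).

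It remains to treat the degenerate case $a_0=0$ or $a_m=0$, in which (\ref{cor2}) reduces to a single inequality such as $a_m^{1/m}\le c^{1/m}$; this follows from the monotonicity of $MV_n$ together with its invariance under parallel translations, since a suitable translate of $\Delta_1$ is contained in $\Delta_1+\Delta_2$. I expect the only genuinely delicate points to be the bookkeeping of multiplicities in the application of the Alexandrov--Fenchel inequality and the verification that no intermediate $a_k$ can vanish once $a_0$ and $a_m$ are positive; the remaining steps are routine.
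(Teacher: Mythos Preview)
The paper does not supply a proof of this corollary (it is merely listed as one of the consequences of the Alexandrov--Fenchel inequality), so there is nothing to compare your argument against; your overall strategy---expand $c$ multilinearly and bound each $a_k$ below by $a_0^{(m-k)/m}a_m^{k/m}$---is indeed the classical one. However, one step does not go through as written.

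You obtain only the log-concavity relations $a_k^2\ge a_{k-1}a_{k+1}$ (Corollary~\ref{AFcor1} with parameter $2$) and then assert that ``a short induction'' forces all $a_k>0$ once $a_0,a_m>0$. For nonnegative log-concave sequences this is false: with $m=3$ the sequence $a_0=1$, $a_1=0$, $a_2=0$, $a_3=1$ satisfies $a_k^2\ge a_{k-1}a_{k+1}$ for $k=1,2$, yet the interior terms vanish and the inequality $\sum_k\binom{3}{k}a_k\ge(a_0^{1/3}+a_3^{1/3})^3$ reads $2\ge 8$. So log-concavity alone is not enough to reach your key bound $a_k\ge a_0^{(m-k)/m}a_m^{k/m}$, and hence not enough for \eqref{cor2}.

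The fix is to invoke Corollary~\ref{AFcor1} with its full parameter $m$ rather than $2$: taking $k$ of the first $m$ bodies equal to $\Delta_1$ and $m-k$ equal to $\Delta_2$ (with the tail $\Delta_{m+1},\dots,\Delta_n$ fixed) gives directly
\[
a_m^{\,k}\,a_0^{\,m-k}\ \le\ a_k^{\,m},
\]
which is exactly the bound you need, valid without any positivity hypothesis. Plugging this into the binomial expansion of $c$ and taking $m$-th roots finishes the proof in all cases at once; the separate treatment of $a_0=0$ or $a_m=0$ is then unnecessary (though your translate argument for it is correct, and in fact $c\ge a_m$ is already visible from the expansion since every $a_k\ge 0$).
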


For $m=n$, inequality (\ref{cor2}) coincides with inequality (\ref{eq3}). Thus, the  Brunn--Minkowski inequality is  a corollary of the Alexandrov--Fenchel inequality. On the other hand, all known geometric proofs of the Alexandrov--Fenchel inequality are based on the Brunn--Minkowski inequality (all of  these proofs are rather tricky). In fact,  Theorem \ref{B-M} and  Theorem \ref{A-F}  follow from  algebraic versions of these theorems (see section \ref{sect8}).  Besides pure algebraic arguments, our proof of these algebraic versions  makes use of Theorem\ref{B-M} for the case $n=2$. The case $n=2$ is both very special and simple algebraically and geometrically.  We focus on it in the next section.

\subsection{Inequalities in dimension two}

The area of a convex body in the standard plane $\Bbb R^2$ is a homogeneous  polynomial of degree two on the semigroup of planer convex bodies. We show below that
a  homogeneous polynomial $Q:S\to \Bbb R$  of degree two on any  semigroup $S$  satisfies the Brunn--Minkowski type inequality for  $x,y\in S$ if and only if it satisfies the  Alexandrov--Fenchel type inequality for $x,y\in S$ (see Theorem \ref{B-MA-F}).
In addition, the study of  homogeneous  polynomials  of degree two on a semigroup $S$ can be reduced to the study of quadratic forms on the real vector space $G(S)\otimes_{\Bbb Z} \Bbb R$.

\begin{definition}\label{polynomial B-Mxy}
Let $Q:S\to \Bbb R$ be a homogeneous polynomial of degree two, let $B:S^2\to \Bbb R$ be its $2$-polarization. Then for a pair of points  $x,y\in S $ the polynomial $Q$ satisfies:
\begin{enumerate}
\item the Brunn--Minkowski type inequality  if $Q(x)\geq 0$, $Q(y)\geq 0$, $Q(x+y)\geq 0 $ and
\begin{equation}\label{eq5}
Q^{\frac{1}{2}}(x) + Q^{\frac{1}{2}}(y) \leq Q^{\frac{1}{2}}(x+y);
\end{equation}
\item the  Alexandrov--Fenchel  type inequality if $Q(x)\geq 0$, $Q(y)\geq 0$,  $B(x,y)\geq 0$ and
\begin{equation}\label{eq6}
  Q(x)Q(y)\leq B^2(x,y).
\end{equation}
\end{enumerate}
\end{definition}

The following theorem is  easy to prove:

\begin{theorem}\label{B-MA-F} A homogeneous polynomial of degree two   satisfies the Brunn--Minkowski type inequality for $x, y \in S$ if and only if it
satisfies the Alexandrov--Fenchel  type inequality for $x, y\in S$.
\end{theorem}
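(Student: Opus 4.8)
The plan is to exploit the fact that a degree-two homogeneous polynomial $Q$ on $S$ is completely determined by its polarization $B$, via $Q(x) = B(x,x)$ and the bilinearity of $B$ (all of this being available from the results of section \ref{subsec2.3}). The whole statement therefore becomes a purely formal fact about a symmetric bi-additive form $B$ evaluated at a fixed pair $x,y$, so I would work entirely with the three real numbers $a = Q(x) = B(x,x)$, $b = Q(y) = B(y,y)$, and $c = B(x,y)$, together with the identity $Q(x+y) = B(x+y,x+y) = a + 2c + b$.

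First I would record the translation dictionary. The Alexandrov--Fenchel type inequality for $x,y$ is the conjunction $a \geq 0$, $b \geq 0$, $c \geq 0$, and $ab \leq c^2$. The Brunn--Minkowski type inequality for $x,y$ is the conjunction $a \geq 0$, $b \geq 0$, $a + 2c + b \geq 0$, and $\sqrt{a} + \sqrt{b} \leq \sqrt{a + 2c + b}$. Next I would observe that whenever $a \geq 0$ and $b \geq 0$, squaring is a monotone operation on the nonnegative reals, so the inequality $\sqrt{a} + \sqrt{b} \leq \sqrt{a+2c+b}$ (both sides nonnegative once $a+2c+b \geq 0$) is equivalent to $a + b + 2\sqrt{ab} \leq a + b + 2c$, i.e. to $\sqrt{ab} \leq c$. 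In particular this already forces $c \geq 0$, and then $\sqrt{ab} \leq c$ is equivalent to $ab \leq c^2$; conversely, if $c \geq 0$ and $ab \leq c^2$ then $\sqrt{ab} \leq c$ and also $a + 2c + b \geq a + 2\sqrt{ab} + b = (\sqrt a + \sqrt b)^2 \geq 0$, so the extra hypothesis $a+2c+b \geq 0$ in the Brunn--Minkowski formulation is automatic. This shows that the two bundled hypotheses-plus-inequalities are logically the same condition on $(a,b,c)$, which is exactly the claimed equivalence.

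The only real subtlety — the part I would be most careful about — is the bookkeeping of the side conditions: one must check that each formulation includes enough sign hypotheses for the squaring/square-root manipulations to be reversible (e.g. that $\sqrt{a+2c+b}$ is even defined, and that taking square roots of $\sqrt a + \sqrt b \le \sqrt{a+2c+b}$ back to $\sqrt{ab} \le c$ does not lose the case $c < 0$), and the argument above is arranged so that each direction supplies the missing nonnegativity for the other. No deeper input is needed; in particular nothing about convex bodies, about the dimension being two, or about positivity of $Q$ in general is used — the statement is genuinely a one-line algebra fact about binary symmetric forms once the polarization formalism of section \ref{subsec2.3} is in hand.
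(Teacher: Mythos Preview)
Your proposal is correct and follows essentially the same route as the paper: expand $Q(x+y)=Q(x)+2B(x,y)+Q(y)$ via the polarization and observe that, after squaring, the Brunn--Minkowski inequality $\sqrt{Q(x)}+\sqrt{Q(y)}\le\sqrt{Q(x+y)}$ is equivalent to $\sqrt{Q(x)Q(y)}\le B(x,y)$, which in turn is equivalent to the Alexandrov--Fenchel inequality together with $B(x,y)\ge 0$. The paper only writes out the BM$\Rightarrow$AF direction and declares the converse ``similar''; your version is actually more careful in tracking the auxiliary sign conditions (in particular that $Q(x+y)\ge 0$ is automatic once the AF data are in hand), but there is no difference in method.
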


\begin{proof} Let us show that  if  $Q$ satisfies the Brunn--Minkowski  type inequality for $x,y\in S$, then it satisfies the Alexandrov--Fenchel inequality for $x$ and  $y$.
Indeed,  by squaring  both sides of  (\ref{eq5} ), we  obtain $Q(x) + 2 Q(x)^{\frac{1}{2}}Q(y)^{\frac {1}{2}} + Q(y)\leq Q(x)+2B(x,y)+ Q(y)$, or $Q(x)^{\frac{1}{2}}Q(y)^{\frac {1}{2}}\leq B(x,y)$. Thus, $B(x,y)$ is nonnegative. By squaring  both sides of the previous inequality, we obtain  $Q(x)Q(y)\leq B^2(x,y)$.  Theorem is proven in one direction. Its proof in the opposite direction is similar.
\end{proof}

A homogeneous polynomial $Q$ on $S$ can be induced from a homogeneous polynomial  on the real vector space $S_{\Bbb R}= G(S)\otimes_{\Bbb Z} \Bbb R$.  The  inequality $B^2(x,y)\geq Q(x)Q(y)$  for two independent vectors $x,y\in S_{\Bbb R}$ means that the restriction of the quadratic form $Q$ to the plane spanned by $x$ and $y$ can not be positive or negative  definite. If $Q(x)>0$ at some point $x\in S$, then $Q(y)\leq 0$ on the hyperplane in $S_{\Bbb R}$ orthogonal to $x$, i.e., $B(x,y)=0$.

Our proofs of the  Alexandrov--Fenchel inequalities for mixed volumes and of their version  in intersection theory are based on the Brunn--Minkowski inequality for 2-dimensional convex bodies.
This inequality can be proved using two different approaches.

First, the Brunn--Minkowski inequality in any dimension is not difficult to establish.

Second, for $n=2$, the Alexandrov--Fenchel inequality can be proved directly by slightly modifying one of the classical proofs of the isoperimetric inequality (see the proof of Theorem A in \cite{Kh-T14}). By Theorem \ref{B-MA-F}, the Brunn--Minkowski inequality in dimension $n=2$ follows from the Alexandrov--Fenchel inequality.

Let us return to classical geometry and prove the isoperimetric inequality.

\begin{corollary} The area $V(\Delta)$ of a planar  convex body $\Delta$ satisfies the following inequality: \begin{equation*} V(\Delta)\leq \frac{1}{4\pi} l(\partial \Delta)^2,\end{equation*}
where $l(\partial \Delta)$ is the length of the boundary of $\Delta$. Moreover, if $\Delta$ is a ball, then the inequality becomes equality.
\end{corollary}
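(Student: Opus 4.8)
The plan is to derive the isoperimetric inequality as a direct consequence of the two-dimensional Brunn--Minkowski inequality together with the geometric identification of a mixed volume as a perimeter. First I would fix the ambient plane $\Bbb L_2=\Bbb R^2$ with its Euclidean structure and let $B$ be the unit disk, so that $V(B)=\pi$. By Proposition \ref{B} applied with $n=2$, the mixed volume of the pair $(\Delta,B)$ is half the perimeter: $MV_2(\Delta,B)=\tfrac12\,l(\partial\Delta)$.

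Next I would invoke Theorem \ref{B-MA-F}: the area functional $V$ on the semigroup of planar convex bodies is a homogeneous polynomial of degree two whose $2$-polarization is $MV_2$; by Theorem \ref{B-M} in dimension $n=2$ it satisfies the Brunn--Minkowski type inequality, hence also the Alexandrov--Fenchel type inequality, which for the pair $(\Delta,B)$ reads $V(\Delta)V(B)\le MV_2(\Delta,B)^2$. Substituting $V(B)=\pi$ and $MV_2(\Delta,B)=\tfrac12 l(\partial\Delta)$ gives $\pi V(\Delta)\le \tfrac14 l(\partial\Delta)^2$, which is exactly the asserted inequality after dividing by $\pi$.

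For the equality statement I would simply test a disk $\Delta=\rho B$ with $\rho>0$. Then $V(\Delta)=\rho^2 V(B)=\pi\rho^2$, while multilinearity and symmetry of $MV_2$ give $MV_2(\rho B,B)=\rho\,MV_2(B,B)=\rho\pi$, so $l(\partial\Delta)=2\,MV_2(\rho B,B)=2\pi\rho$; hence $\tfrac1{4\pi}l(\partial\Delta)^2=\tfrac1{4\pi}(2\pi\rho)^2=\pi\rho^2=V(\Delta)$, and equality indeed holds.

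Here there is essentially no hard step left: all the analytic content has been absorbed into the two-dimensional Brunn--Minkowski inequality, which we are entitled to assume (and which, as the text recalls, can be proved either directly in all dimensions or, in dimension two, from a classical proof of the isoperimetric inequality via Theorem \ref{B-MA-F}). The only points requiring care are bookkeeping ones: keeping the normalizing constant $\tfrac1n$ of Proposition \ref{B} and the value $V(B)=\pi$ straight, and noting that the claimed equality assertion is only the easy direction — that disks achieve equality — not the full characterization of equality cases, which would require the equality discussion accompanying the Brunn--Minkowski inequality.
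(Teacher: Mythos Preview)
Your proof is correct and follows essentially the same route as the paper: apply Proposition~\ref{B} with $n=2$ to identify $MV_2(\Delta,B)=\tfrac12 l(\partial\Delta)$, then use the two-dimensional Alexandrov--Fenchel inequality with $\Delta_1=\Delta$, $\Delta_2=B$. The only cosmetic difference is that the paper cites inequality~(\ref{eq4}) for $n=2$ directly, whereas you derive it from the Brunn--Minkowski inequality via Theorem~\ref{B-MA-F}; since the discussion immediately preceding the corollary is precisely about this equivalence in dimension two, the two presentations amount to the same argument, with yours spelling out one extra step and also verifying the equality case for disks explicitly.
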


\begin{proof} Corollary follows from  Proposition \ref{B} and from  inequality (\ref{eq4}) for $n=2$; $\Delta_1=\Delta$; $\Delta_2=B_1$, where $B_1$ is the unit ball.
  \end{proof}

\section{Newton--Okounkov bodies and intersection theory}\label{sect7}
The theory of Newton--Okounkov bodies (or, shortly,  of NO bodies) is based on the theory of semigroups of lattice points, i.e., of sub-semigroups of the group $\Bbb Z^n$ (see  \cite{K-K12} and  \cite{K-K08},  \cite{LM09}).

For any faithful $\Bbb Z^n$-valuation on the field of rational functions on $n$-dimensional irreducible variety $X$, the theory of NO bodies assigns to a each space $L\in K(X)$  a convex body $\Delta(L)\subset \Bbb R^n$. In particular, it relates  the self-intersection index of a space  $L\in K(X)$ and the volume of its NO body $\Delta(L)\subset \Bbb R^n$.

\subsection{NO bodies of elements of the semigroup $K(X)$}

Consider the lattice  $\Bbb Z^n$, equipped with a total order $\succ$, which  is compatible with the addition (i.e., if $a\succ b$, then $a+c>b+c$). For example, one can consider the  lexicographic  order.

 A {\it  $\Bbb Z^n$-valuation} $v$ on the field  $\bold k(X)$  of rational functions on an irreducible $n$-dimensional variety $X$ over an algebraically closed field $\bold k$  is {\it faithful} if the valuation map
$v:\bold k(X)\setminus \{0\} \to \Bbb Z^n$ is  surjective. Let us give an example of a faithful $\Bbb Z^n$-valuation $v$
on $n$-dimensional irreducible  variety $X$ over $\Bbb C$.

\begin{example}\label{exval}  Let $a$ be a smooth point in $X$ and let $x_1,\dots, x_n$ be a  system of coordinates in a neighborhood of $a$, such that $x_1(a)=\dots=x_n(a)=0$. In the example, the valuation   $v(f)$  of a  regular function  at $a$   is equal to $\bold m =(m_1,\dots, m_n)\in \Bbb Z^n$, where $\bold x^\bold m= x_1^{m_1}\cdot \ldots \cdot x_n^{m_n}$
is the smallest  monomial  in the  lexicographic order which appears with nonzero coefficient in the Taylor series of the function  $f$ at the point $a$. The valuation $v$ of a ratio $f/g$  of  functions $f,g\in \Bbb C(X)$, which are regular at the point  $a$ (and $f,g$ are  not identically equal to  zero) is equal, by  definition to $v(f)-v(g)$.
\end{example}

 Elements $\{L(i)\}$,  $i=1,\dots,n,\dots$  of the semigroup  $K(X)$ form  a {\it multiplicative sequence} if for any natural numbers $n, k$ the inclusion $L(n) L(k)\subset L(n+k)$ holds. Using a  $\Bbb Z^n$-valuation $v$, to any multiplicative sequence $\{L(i)\}$ one can  assigns   a convex body $\Delta (\{L(i)\}) \subset  \Bbb R^n \supset Z^n$ (for details see  \cite{K-K12}) which is called  the Newton--Okounkov body (NO body)  of the multiplicative sequence $\{L(i)\}$ with respect to the chosen valuation $v$.
The lattice $\Bbb Z^n\subset \Bbb R^n$  endows the space $\Bbb R^n$ with the volume form invariant under the parallel translations. The measure of a convex body $\Delta \subset \Bbb R^n$ is denoted by $V(\Delta $).
 We assume  below   that a faithful valuation $v$ on $\bold k (X)$ is chosen and fixed.

\begin{definition}  The {\it Newton--Okounkov body} $\Delta( L)$  of an element $L\in K(X)$ is defined as the Newton--Okounkov body of the multiplicative semigroup $L^i$ generated by $L$.
\end{definition}

 For the purposes of the paper, the most important result of  \cite{K-K12} is the following theorem:

\begin{theorem}[NO boby of $L\in K(X)$]\label{main}
The following relations hold:

\begin{enumerate}
\item for any $L \in K(X)$ we have:  $[L,\dots, L] = n! V(\Delta(L))$;

\item for any $L_1,L_2 \in K(X)$ we have:  $\Delta(L_1L_2) \supset  \Delta(L_1) + \Delta(L_2)$.
\end{enumerate}
\end{theorem}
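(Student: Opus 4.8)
The plan is to reduce both statements to the combinatorics of semigroups of integral points developed in \cite{K-K12}, feeding the geometry in at a single point. First I would let the valuation do the work. Attach to $L$ the multiplicative sequence $\{L(k)\}_{k\geq 0}$ of completed powers $L(k)=\overline{L^k}$ and the semigroup
\[
S(L)=\bigl\{\,(v(f),k)\ :\ k\geq 0,\ f\in L(k)\setminus\{0\}\,\bigr\}\subset\Bbb Z^n\times\Bbb Z_{\geq 0},
\]
so that, by definition, $\Delta(L)$ is the cross-section at level $1$ of the closed convex cone spanned by $S(L)$. A faithful $\Bbb Z^n$-valuation on a field of transcendence degree $n$ over $\bold k$ has one-dimensional leaves, which yields the dictionary between algebra and combinatorics: for every finite-dimensional $V\subset\bold k(X)$ the set $v(V\setminus\{0\})$ has exactly $\dim_{\bold k}V$ elements. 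Hence the number of lattice points of $S(L)$ on the $k$-th level equals $\dim_{\bold k}\overline{L^k}$; working with the completions $\overline{L^k}$ here is what lets the volume detect the mapping degree of $\Phi_L$, so it cannot be dropped.

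Next I would invoke the asymptotic counting theorem for semigroups of integral points from \cite{K-K12}: the number of points of $S(L)$ on the $k$-th level is $V(\Delta(L))\,k^n+o(k^n)$. Combined with the dictionary this gives $\dim_{\bold k}\overline{L^k}=V(\Delta(L))\,k^n+o(k^n)$, so it remains to identify the leading coefficient of this Hilbert-type function with $\tfrac1{n!}\D(L)=\tfrac1{n!}[L,\dots,L]$. Here Theorem~\ref{selfspace} is the key. If $Y=\overline{\Phi_L(X)}$ has dimension $n$, then $\D(L)=d(\Phi_L)\,d(Y)$, while (after passing to a projective model on which $\Phi_L$ is a morphism) $\dim_{\bold k}\overline{L^k}$ is asymptotically the Hilbert function of $Y$ counted with the generic multiplicity $d(\Phi_L)$ of the Kodaira map, whose leading term is $\tfrac1{n!}\,d(\Phi_L)\,d(Y)\,k^n$; if $\dim Y<n$ then $\D(L)=0$ and $\Delta(L)$ is lower-dimensional, so $V(\Delta(L))=0$ as well. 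Comparing the two expressions for the leading coefficient yields $V(\Delta(L))=\tfrac1{n!}\D(L)$, which is claim~(1).

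Claim~(2) is the easy half and needs neither the leaf structure nor the counting theorem. A valuation is additive on products, $v(f_1f_2)=v(f_1)+v(f_2)$, so on each level $k$ one has $v\bigl(\overline{L_1^{\,k}}\bigr)+v\bigl(\overline{L_2^{\,k}}\bigr)\subset v\bigl(\overline{(L_1L_2)^k}\bigr)$ inside $\Bbb Z^n$. Scaling by $1/k$, taking the union over all $k$, and passing to the closed convex hull turns this into $\Delta(L_1)+\Delta(L_2)\subset\Delta(L_1L_2)$, since Minkowski addition commutes with these operations.

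The main obstacle is the identification in the middle step of~(1): passing from ``$\dim_{\bold k}\overline{L^k}$ grows like $c\,k^n$'' to ``$c=\tfrac1{n!}[L,\dots,L]$''. This is exactly the place where combinatorics alone is not enough and the geometric meaning of the intersection index must enter --- asymptotic Riemann--Roch on the image variety together with the mapping degree of $\Phi_L$, i.e.\ Theorem~\ref{selfspace} --- and where one must take the completions $\overline{L^k}$ rather than the naive powers $L^k$. All of this is carried out in detail in \cite{K-K12}; we do not reproduce it here.
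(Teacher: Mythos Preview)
Your sketch is correct and follows the approach of \cite{K-K12}, which is exactly what the paper cites for this theorem. In fact the paper does \emph{not} prove part~(1) at all: it simply attributes the result to \cite{K-K12}. The paper only sketches part~(2), and its sketch is the same as yours --- additivity of the valuation gives, at each level~$k$, the inclusion of image sets $v(L_1^k\setminus\{0\})+v(L_2^k\setminus\{0\})\subset v((L_1L_2)^k\setminus\{0\})$, and the NO bodies inherit the Minkowski inclusion. So for~(2) you match the paper, and for~(1) you have gone further, outlining the two-step argument (semigroup asymptotics plus the geometric identification via Theorem~\ref{selfspace}) that the paper leaves entirely to the reference.

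One small discrepancy worth flagging: the paper's \emph{definition} of $\Delta(L)$ uses the multiplicative sequence $\{L^k\}$, not the completed sequence $\{\overline{L^k}\}$ you build $S(L)$ from. Since $L^k\sim\overline{L^k}$, the two sequences yield the same NO body, so this does not affect the inclusion in~(2); but your remark that the completions ``cannot be dropped'' applies to the \emph{dimension count} in~(1), not to the definition of $\Delta(L)$ itself. You might separate these two roles more cleanly: define $\Delta(L)$ via $\{L^k\}$ as the paper does, and bring in the completions only when you compare $\dim_{\bold k}\overline{L^k}$ with the Hilbert-type growth of the image variety.
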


Statement (1)  relates the self-intersection index of $L\in K(X)$ with its NO body $\Delta(L)$.  Statement (2)   implies all geometric type inequalities for the semigroup $K(X)$.
 Let us sketch its  proof. For $\mathcal L_1, \mathcal L_2\in K(X)$  and $\mathcal L_3=\mathcal L_1 \cdot \mathcal L_2$, let $A_1, A_2,A_3\subset \Bbb Z^n$ be the images of the sets  $\mathcal L_1\setminus\{0\}$, $\mathcal L_2\setminus\{0\}$, $\mathcal L_3\setminus\{0\}$ under the valuation map. Then the inclusion  $A_3\supset A_1 +A_2$ holds. Indeed, for any $a_1\in A_1$ and $a_2\in A_2$  there exist nonzero elements  $f_1\in \mathcal L_1$ and $f_2\in \mathcal L _2$ such that $v(f_1)=a_1$ and $v(f_2)=a_2$. For $f_3=f_1f_2 \in \mathcal L_3$ we have  $v(f_3)=a_1+a_2$, which proves the inclusion $A_3\supset A_1+A_2$.

The statement (2) follows from these inclusions for the terms of  sequences of spaces that define the NO bodies $\Delta(L_1)$ and $\Delta(L_2)$.

\subsection{The BKK theorem and  formulas for the intersection index}\label{subsec7.2}
In this section, we present a proof of the BKK (Bernstein--Koushnirenko--Khovanskii) theorem based on NO bodies. We also present   formulas expressing the intersection index via NO bodies. These results  are based on the first statement of the Theorem \ref{main}.

Koushnirenko's theorem is a very particular case of the first statement of Theorem \ref{main} related to  the semigroup $K_n$ of $(\Bbb C^*)^n$-invariant spaces on the
  complex torus $X=(\Bbb C^*)^n$ (see section \ref{subsec4.2}). Let $\Bbb Z^n$ be the lattice of  characters of $(\Bbb C^*)^n$.  With any finite set $A\subset \Bbb Z^n$ one can associate the finite dimensional space $L_A\in K(X)$ spanned by the characters from the set $A$.
By definition, the {\it Newton polytope} $\Delta(A)$ of $L_A$ is the convex hull of the set $A\subset \Bbb Z^n \subset \Bbb R^n$.

\begin{theorem}[Koushnirenko's theorem]\label{Ku} For $n$ generic functions $f_1,\dots,f_n\in L_A$ the number of solutions in $(\Bbb C^*)^n$ of the system $f_1=\dots=f_n=0$ is equal to $n! V(\Delta(A))$.
\end{theorem}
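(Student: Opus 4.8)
The plan is to deduce Koushnirenko's theorem directly from the first statement of Theorem \ref{main} applied to the semigroup $K_n$ of $(\Bbb C^*)^n$-invariant spaces. First I would observe that for $n$ generic functions $f_1,\dots,f_n\in L_A$, the number of solutions in $(\Bbb C^*)^n$ of the system $f_1=\dots=f_n=0$ is, by the very definition of the intersection index on $K(X)$ for $X=(\Bbb C^*)^n$, nothing but the self-intersection index $[L_A,\dots,L_A]=\D(L_A)$. Here one must check that the smooth open set $U$ can be taken to be all of $(\Bbb C^*)^n$, which is legitimate since the torus is smooth and the characters in $L_A$ are regular and nonvanishing on it, so the sets $P_{L_A}$ and $B_{L_A}$ are empty; and that for a generic choice the roots are simple, which follows from Bertini-type genericity together with the $(\Bbb C^*)^n$-action making the generic member of the linear system transverse.

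Next I would apply part (1) of Theorem \ref{main} to get $[L_A,\dots,L_A]=\D(L_A)=n!\,V(\Delta(L_A))$, where $\Delta(L_A)\subset\Bbb R^n$ is the Newton--Okounkov body of $L_A$ with respect to a fixed faithful $\Bbb Z^n$-valuation on $\Bbb C((\Bbb C^*)^n)$. So the whole theorem reduces to the identity $V(\Delta(L_A))=V(\Delta(A))$, i.e.\ to identifying the Newton--Okounkov body of $L_A$ with its classical Newton polytope (up to a volume-preserving change, in fact up to translation). The natural way to see this is to choose the valuation conveniently: take $v$ to be (minus) a generic linear function on $\Bbb Z^n$ refined lexicographically, so that $v$ restricted to characters of $(\Bbb C^*)^n$ is an injective group homomorphism $\Bbb Z^n\to\Bbb Z^n$ whose real-linear extension is a volume-preserving isomorphism of $\Bbb R^n$. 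Under such a valuation, $v$ of a Laurent polynomial picks out the $v$-minimal monomial actually occurring, so the image under $v$ of $(L_{A}^{\,k}\setminus\{0\})$ fills out $v\big((kA)\cap\Bbb Z^n\big)$ and more generally, by Lemma \ref{Delta}, the relevant lattice points converge to $v(\Delta(A))$ as $k\to\infty$. Hence the Newton--Okounkov body $\Delta(L_A)=\lim_k \tfrac1k v(L_A^{\,k}\setminus\{0\})$ equals the image $v(\Delta(A))$, which has the same volume as $\Delta(A)$.

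Assembling the pieces: the number of torus solutions of the generic system equals $\D(L_A)$ by definition of the intersection index, equals $n!\,V(\Delta(L_A))$ by Theorem \ref{main}(1), and equals $n!\,V(\Delta(A))$ by the volume identification just described. The main obstacle, and the step deserving the most care, is the genericity and transversality claim that justifies equating the honest count of torus zeros with the intersection index $[L_A,\dots,L_A]$ — that is, showing that for generic $f_i\in L_A$ all common zeros in $(\Bbb C^*)^n$ are simple and that none escape to infinity or to a base locus. This is handled by a Bertini/Sard argument exploiting the transitive $(\Bbb C^*)^n$-action on the torus, which moves the base-point-free linear system $|L_A|$ so that its generic members meet transversally; since the present paper's stated goal is to sketch rather than grind through such standard arguments, I would cite \cite{Kh92} for the details and content myself with indicating this reduction.
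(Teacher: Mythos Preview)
Your proposal is correct and follows the same outline as the paper: identify the solution count with $\D(L_A)$, apply Theorem~\ref{main}(1), and then identify the Newton--Okounkov body of $L_A$ with the Newton polytope $\Delta(A)$ via a well-chosen valuation. The paper streamlines your valuation step by taking the lexicographic valuation at the origin of $\Bbb C^n\supset(\Bbb C^*)^n$ (Example~\ref{exBKK}), for which $v(\bold x^{\bold m})=\bold m$ holds on the nose, so that $\Delta(L_A)=\Delta(A)$ exactly rather than up to a volume-preserving linear change; this removes the need for your ``generic linear functional refined lexicographically'' and the attendant determinant/volume argument. Your extra care about admissibility of $U=(\Bbb C^*)^n$ and transversality is sound but is already absorbed into the paper's definition of the intersection index (Theorem~\ref{index}), so the paper does not spell it out.
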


To reduce  Theorem \ref{Ku} to Theorem \ref{main}, it is enough to  present an example of a faithful $\Bbb Z^n$-valuation $v$ on the field $\Bbb C(X)$ for $X=(\Bbb C^*)^n$, such that the NO body of each space $L_A$ with respect to $v$ is equal to $\Delta(A)$. It is easy to see that such equalities hold if and only if for each character
$\bold x^{\bold m}$  the equality $v(\bold x^{\bold m})= \bold m $ holds.

\begin{example}\label{exBKK} The torus $(\Bbb C^*)^n$ is bi-rationally  equivalent to the affine space $\Bbb C^n$. Consider the valuation $v$
from Example \ref{exval} in which the point $a\in \Bbb C^n$ is the origin in $\Bbb C^n$ and $x_1,\dots,x_n$ is the standard system of coordinates. Obviously, for any monomial $\bold x^\bold m$ the identity $v(\bold x^\bold m)=\bold m\in \Bbb Z^n$ holds.
\end{example}

Thus, Koushnirenko's theorem is proven.

Let us show that the intersection index of elements from the semigroup $K(X)$ can always  be expressed via NO bodies of elements of $K(X)$.

\begin{theorem}\label{ONBKK} The intersection index of $L_1,\dots,L_n\in K(X)$  can be expressed using  volume of NO bodies:
\begin{equation}\label{ONBKKe} [L_1,\dots, L_n]= \sum _{1\leq k\leq n} \Bigg( \sum _{1\leq i_1\leq \dots \leq  i_k} (-1)^{n-k} V(\Delta (L_{i_1}\cdot \ldots\cdot L_{i_k}))\Bigg).\end{equation}
\end{theorem}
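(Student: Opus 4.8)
The plan is to combine the two ingredients already assembled in the excerpt: Lemma~\ref{formula}, which expresses the intersection index $[L_1,\dots,L_n]$ as the $n$-polarization formula applied to the self-intersection function $\D$, and statement~(1) of Theorem~\ref{main}, which says $\D(L)=[L,\dots,L]=n!\,V(\Delta(L))$ for every $L\in K(X)$. First I would write down Lemma~\ref{formula} verbatim:
\begin{equation*}
[L_1,\dots,L_n]=\frac{1}{n!}\sum_{1\leq k\leq n}\Bigg(\sum_{1\leq i_1\leq\dots\leq i_k}(-1)^{n-k}\,\D\bigl(L_{i_1}\cdot\ldots\cdot L_{i_k}\bigr)\Bigg).
\end{equation*}
Then I would substitute $\D\bigl(L_{i_1}\cdot\ldots\cdot L_{i_k}\bigr)=n!\,V\bigl(\Delta(L_{i_1}\cdot\ldots\cdot L_{i_k})\bigr)$ term by term, using statement~(1) of Theorem~\ref{main} applied to the product space $L_{i_1}\cdot\ldots\cdot L_{i_k}\in K(X)$. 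The factor $n!$ coming out of each term cancels the prefactor $\tfrac{1}{n!}$, and the resulting expression is exactly~\eqref{ONBKKe}. That is essentially the whole argument.

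The only point that requires any thought — and what I would flag as the "main obstacle," though it is a mild one — is making sure the self-intersection function $\D$ on $K(X)$ is genuinely a homogeneous polynomial of degree $n$ in the sense of Section~\ref{sect2}, so that the polarization formula of Lemma~\ref{formula} (equivalently the formula in the polarization theorem around~\eqref{eq1}) legitimately applies with $\D$ in the role of $P$. This is already granted by Theorem~\ref{Dpolar} and the remark following it ("the self-intersection index is a homogeneous polynomial of degree $n$ on the semigroup $K(X)$"), so I would simply cite that. One should also note that Lemma~\ref{formula} is itself a direct instance of formula~\eqref{eq1} with $x=\bold 1$ the identity of $K(X)$; I would mention this so the reader sees that \eqref{ONBKKe} and Lemma~\ref{formula} are the same identity, once $\D$ is rewritten in terms of volumes.

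To keep the proof honest I would add one sentence observing that no use is made here of statement~(2) of Theorem~\ref{main} (the inclusion $\Delta(L_1L_2)\supset\Delta(L_1)+\Delta(L_2)$); that inclusion is what is needed for the geometric \emph{inequalities} later, whereas the present identity for the intersection index needs only the volume equality~(1). Thus the proof is the two-line substitution above, and I would present it as: "By Lemma~\ref{formula} and statement~(1) of Theorem~\ref{main}, replacing each $\D\bigl(L_{i_1}\cdot\ldots\cdot L_{i_k}\bigr)$ by $n!\,V\bigl(\Delta(L_{i_1}\cdot\ldots\cdot L_{i_k})\bigr)$ and cancelling $n!$ yields~\eqref{ONBKKe}."
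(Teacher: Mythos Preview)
Your proposal is correct and matches the paper's own proof exactly: the paper simply says that Theorem~\ref{ONBKK} follows from Theorem~\ref{main} and Lemma~\ref{formula}, which is precisely the substitution argument you describe.
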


\begin{proof} Theorem \ref{ONBKK} follows from Theorem \ref{main}  and Lemma \ref{formula}.
\end{proof}

Note that  the right-hand side of (\ref{ONBKKe}) is not equal to the  mixed volume of the NO bodies $\Delta(L_1), \dots, \Delta (L_n)$ multiplied by $n!$,  because the inclusion in statement (2) of Theorem \ref{main} is, in general, strict.

The  BKK theorem is a very special case of Theorem \ref{ONBKK}. Below we use notations introduced before Theorem \ref{Ku}.
Let $A_1,\dots,A_n$ be an $n$-tuple of finite  subsets in  $\Bbb Z^n$ and let $L_{A_1},\dots, L_{A_n}$ be the corresponding $n$-tuple of elements $K(X)$  for $X= (\Bbb C^*)^n$.

\begin{theorem}[BKK theorem]\label{BKK} For $n$ generic functions $f_1\in L_{A_1},\dots,f_n\in L_{A_n}$ the number of solutions in $(\Bbb C^*)^n$ of the system $f_1=\dots=f_n=0$ is equal to \begin{equation*}n!M V_n(\Delta(A_1),\dots, \Delta(A_n)).\end{equation*}
\end{theorem}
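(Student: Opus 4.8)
The plan is to deduce the BKK theorem from Theorem \ref{ONBKK} by a purely combinatorial argument on Newton polytopes, using the fact that for the torus $X=(\Bbb C^*)^n$ and the valuation of Example \ref{exBKK} the inclusion in statement (2) of Theorem \ref{main} is actually an equality. The first step is to record that for any finite set $A\subset\Bbb Z^n$ one has $\Delta(L_A)=\Delta(A)$, the convex hull of $A$. This follows because $v(\bold x^{\bold m})=\bold m$ for each character (Example \ref{exBKK}), so the image under $v$ of $L_A\setminus\{0\}$ is contained in $A$; combined with Lemma \ref{Delta}, which identifies the completion of $L_A$ with $L_{\Delta(A)\cap\Bbb Z^n}$, and with the general fact that the NO body is determined by the semigroup of valuation vectors of the multiplicative sequence $\{L_A^i\}$, one gets that $\Delta(L_A)$ is exactly the convex hull of $A$.

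The second and crucial step is to upgrade statement (2) of Theorem \ref{main} to an equality in this setting: $\Delta(L_{A_1}L_{A_2})=\Delta(L_{A_1})+\Delta(L_{A_2})$. Since $L_{A_1}L_{A_2}=L_{A_1+A_2}$ by definition of the product in $K_n$, the previous step gives $\Delta(L_{A_1}L_{A_2})=\Delta(A_1+A_2)$, and it is elementary that $\Delta(A_1+A_2)=\Delta(A_1)+\Delta(A_2)$ (this is also recorded in Theorem \ref{theorem4.1}). Hence for any sub-collection $i_1\le\dots\le i_k$ we have
\begin{equation*}
\Delta(L_{A_{i_1}}\cdot\ldots\cdot L_{A_{i_k}})=\Delta(A_{i_1})+\dots+\Delta(A_{i_k}).
\end{equation*}

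The final step is to substitute these equalities into formula (\ref{ONBKKe}) of Theorem \ref{ONBKK}. The left-hand side, by Theorem \ref{index}, is the number of solutions in $(\Bbb C^*)^n$ of a generic system $f_1=0,\dots,f_n=0$ with $f_i\in L_{A_i}$ (one must note here that an admissible open set $U$ for the $L_{A_i}$ can be taken to be all of $(\Bbb C^*)^n$, since the $L_{A_i}$ are spanned by characters and hence have no poles or base points on the torus, so that counting simple roots in $U$ is the same as counting solutions in $(\Bbb C^*)^n$). The right-hand side becomes
\begin{equation*}
\sum_{1\le k\le n}\Bigg(\sum_{1\le i_1\le\dots\le i_k}(-1)^{n-k}V(\Delta(A_{i_1})+\dots+\Delta(A_{i_k}))\Bigg),
\end{equation*}
which by formula (\ref{eq2}) for the mixed volume is precisely $n!\,MV_n(\Delta(A_1),\dots,\Delta(A_n))$.

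**The main obstacle** is the step establishing $\Delta(L_A)=\Delta(A)$, i.e. that the NO body of the character span $L_A$ with respect to the monomial valuation is exactly the Newton polytope; this is where one uses the structure theory of NO bodies from \cite{K-K12} together with Lemma \ref{Delta}. Once that identification is in hand, everything else is a bookkeeping substitution into the already-established formula of Theorem \ref{ONBKK} and the elementary additivity of convex hulls under Minkowski sum. A minor point worth stating carefully is that the count of simple roots on an admissible $U$ agrees with the honest count of solutions on the torus, which holds because generic members of the linear systems spanned by the $A_i$ meet transversally on $(\Bbb C^*)^n$.
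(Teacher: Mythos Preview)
Your proof is correct and follows essentially the same route as the paper: apply Theorem \ref{ONBKK} with the monomial valuation of Example \ref{exBKK}, use the identities $\Delta(L_A)=\Delta(A)$ and $\Delta(L_{A_{i_1}}\cdots L_{A_{i_k}})=\Delta(A_{i_1})+\dots+\Delta(A_{i_k})$, and then recognize the resulting alternating sum of volumes as $n!\,MV_n$ via the polarization formula. Your added remarks about why $\Delta(L_A)=\Delta(A)$ and why $U=(\Bbb C^*)^n$ is admissible are helpful elaborations of points the paper treats more tersely.
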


\begin{proof} Let us apply Theorem \ref{ONBKK} to the valuation $v$ from Example \ref{exBKK}.  We have:
\begin{equation*}[L_{A_1},\dots, L_{A_n}]= \sum _{1\leq k\leq n} \Bigg( \sum _{1\leq i_1\leq \dots \leq  i_k} (-1)^{n-k} V(\Delta (L_{A_{i_1}}\cdot \ldots\cdot L_{A_{i_k}}))\Bigg).\end{equation*}
Note that for any   finite sets $A,B\in \Bbb Z^n$ the identities  $\Delta(L_A)=\Delta(A)$ and $\Delta (L_A L_B)=\Delta(A+B)$ hold. Thus, we have
\begin{equation*}
[L_{A_1},\dots, L_{A_n}]=\sum _{1\leq k\leq n} \Bigg( \sum _{1\leq i_1\leq \dots \leq  i_k} (-1)^{n-k} V(\Delta (A_{i_1})+\dots+\Delta(A_{i_k}))\Bigg).\end{equation*}
By formula (\ref{eq1}), for the polarization applied of the function $n!V$, we obtain the desired identity.
\end{proof}

A simpler version of the same proof of the BKK theorem, which does not use Theorem \ref{main}, can be found in   \cite{Kh92}.

Let us present   formulas expressing the intersection index of virtual spaces and self-intersection index for virtual spaces via NO bodies

\begin{theorem}\label{virtualNO} For $\mathcal L_1=L_1/L'_1,\dots, \mathcal L_n= L_n/L'_n$, the following formula holds:
\begin{equation}\label{fvirtualNO}
[\mathcal L_1,\dots, \mathcal L_n] = \sum_{I} (-1)^{|J|}V\Bigg(\Delta\Bigg(\prod_{i\in I } L_i \prod _{j\in J} L'_j\Bigg)\Bigg).
\end{equation}
\end{theorem}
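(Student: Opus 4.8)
The plan is to combine the explicit formula for the polarization of a homogeneous polynomial extended to the Grothendieck group (Theorem \ref{maint}, equivalently Theorem \ref{onfracindex}) with the first statement of Theorem \ref{main}. Recall that Theorem \ref{onfracindex} already gives
\begin{equation*}
[\mathcal L_1,\dots,\mathcal L_n] = \frac{1}{n!}\sum_I (-1)^{|J|}\,\D\Bigg(\prod_{i\in I}L_i\prod_{j\in J}L'_j\Bigg),
\end{equation*}
where the sum runs over subsets $I\subset I_n=\{0,\dots,n\}$ and $J=I_n\setminus I$. So the only thing left to do is to replace each self-intersection index $\D(\cdot)$ appearing on the right with the volume of the corresponding NO body.

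First I would invoke statement (1) of Theorem \ref{main}: for any $L\in K(X)$ one has $\D(L)=[L,\dots,L]=n!\,V(\Delta(L))$. Applying this to each of the $2^{n+1}$ terms $L=\prod_{i\in I}L_i\prod_{j\in J}L'_j$ in the formula above, every $\D(\cdot)$ becomes $n!\,V(\Delta(\cdot))$. The factor $n!$ then cancels the $\frac{1}{n!}$ in front of the sum, yielding exactly
\begin{equation*}
[\mathcal L_1,\dots,\mathcal L_n] = \sum_I (-1)^{|J|}\,V\Bigg(\Delta\Bigg(\prod_{i\in I}L_i\prod_{j\in J}L'_j\Bigg)\Bigg),
\end{equation*}
which is (\ref{fvirtualNO}).

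There is essentially no obstacle here: the statement is a bookkeeping composition of two results already proved in the paper — the abstract polarization formula on $G(S)$ and the identity $\D(L)=n!\,V(\Delta(L))$. The one point worth a sentence of care is that the products $\prod_{i\in I}L_i\prod_{j\in J}L'_j$ are genuine elements of the semigroup $K(X)$ (not merely virtual spaces), so Theorem \ref{main}(1) applies verbatim to each of them; and that the representing pairs $(L_i,L'_i)$ are only defined up to $\sim_g$ (indeed up to $\sim_t$ in the components, as remarked after Theorem \ref{maint}), but both $\D$ and $V\circ\Delta$ are insensitive to this by Lemma \ref{polynomial}, so the right-hand side is well defined. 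Thus the proof is just: cite Theorem \ref{onfracindex}, substitute $\D=n!\,V\circ\Delta$ from Theorem \ref{main}(1), and cancel $n!$.
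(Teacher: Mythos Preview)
Your proof is correct and follows exactly the paper's approach: the paper's proof consists of the single line ``Theorem \ref{virtualNO} follows from Theorem \ref{onfracindex},'' and you have simply made explicit the one missing step, namely substituting $\D(L)=n!\,V(\Delta(L))$ from Theorem \ref{main}(1) and cancelling the $n!$.
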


Theorem \ref{virtualNO} follows from Theorem \ref{onfracindex}.

\begin{corollary}\label{selfNo} For $\mathcal L=L_1/L_2$, the following formula holds:
\begin{equation}\label{fselfNO}
\D(\mathcal L) = \sum_{0\leq k\leq n} (-1)^{n-k}{n\choose k} V(\Delta (L_1^k L_2^{n-k})).
\end{equation}
\end{corollary}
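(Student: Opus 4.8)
The plan is to specialize Theorem \ref{virtualNO} (equivalently, formula (\ref{fvirtualNO})) to the case of a single virtual space repeated $n$ times, in exactly the way Corollary \ref{onGD} was obtained from Theorem \ref{onfracindex}. Concretely, I would set $\mathcal L_1 = \dots = \mathcal L_n = \mathcal L = L_1/L_2$, so that each pair $(L_i, L_i')$ equals $(L_1, L_2)$. Then in formula (\ref{fvirtualNO}) the summation over all subsets $I \subseteq I_n$ with $J = I_n \setminus I$ collapses: the product $\prod_{i\in I} L_i \prod_{j\in J} L_j'$ becomes $L_1^{|I|} L_2^{|J|} = L_1^k L_2^{n-k}$ where $k = |I|$, and this depends only on the cardinality $k$ of $I$, not on $I$ itself.

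The combinatorial step is then to count subsets: the number of subsets $I \subseteq I_n$ with $|I| = k$ is $\binom{n}{k}$ (note that $I_n$ has $n+1$ elements in the notation of Theorem \ref{maint}, but as in the passage from Theorem \ref{maint} to Corollary \ref{ongindex} the bookkeeping produces exactly the binomial coefficients $\binom{n}{k}$ for $0 \le k \le n$), and for each such $I$ the sign is $(-1)^{|J|} = (-1)^{n-k}$. Grouping the $2^{n+1}$ terms of (\ref{fvirtualNO}) according to $k = |I|$ therefore yields
\begin{equation*}
\D(\mathcal L) = [\mathcal L,\dots,\mathcal L] = \sum_{0\le k\le n} (-1)^{n-k}\binom{n}{k}\, V\!\big(\Delta(L_1^k L_2^{n-k})\big),
\end{equation*}
which is the asserted formula (\ref{fselfNO}).

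Alternatively, and even more directly, one can combine Corollary \ref{onGD} with statement (1) of Theorem \ref{main}: Corollary \ref{onGD} expresses $\D(\mathcal L)$ as $\frac{1}{n!}\sum_{0\le k\le n}(-1)^{n-k}\binom{n}{k}\D(L_1^k(L_2)^{n-k})$, and Theorem \ref{main}(1) gives $\D(L_1^k L_2^{n-k}) = n!\,V(\Delta(L_1^k L_2^{n-k}))$; substituting cancels the factor $\frac{1}{n!}$ and gives (\ref{fselfNO}). Both routes are routine once the earlier results are in hand; there is no real obstacle, the only point requiring minor care is the index-counting in the first approach — making sure the passage from the sum over subsets of $I_n$ to the sum $\sum_{0\le k\le n}\binom{n}{k}$ is done correctly, exactly as in the proof of Corollary \ref{ongindex}.
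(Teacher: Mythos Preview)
Your proposal is correct, and your second route---combining Corollary \ref{onGD} (which is the specialization of Corollary \ref{ongindex} to $K(X)$) with Theorem \ref{main}(1)---is exactly the paper's approach: the paper simply records that Corollary \ref{selfNo} follows from Corollary \ref{ongindex}. Your first route via Theorem \ref{virtualNO} is also valid and amounts to redoing the passage from Theorem \ref{maint} to Corollary \ref{ongindex} in this specific setting.
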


Corollary \ref{selfNo}  follows from Corollary \ref{ongindex}.

\subsection{NO bodies and geometric type inequalities for the semigroup $K(X)$}

Newton--Okounkov bodies allow us to prove inequalities for the birationally invariant intersection index on the semigroup $K(X)$ on an irreducible $n$-dimensional algebraic variety $X$ (possibly non complete and singular). Proofs of the inequalities presented below  are based on the second statement of Theorem \ref{main} and on the Brunn--Minkowski inequality from convex geometry (in fact, in proofs one can use the Brunn--Minkowski inequality only for two dimensional convex bodies).

\begin{theorem}[Brunn--Minkowski type inequality]\label{aBM} Let $X$ be an irreducible $n$-dimensional  variety,  $L_1, L_2 $ be arbitrary elements in $ K(X)$ and let $L_3$ be their product,$L_3 = L_1L_2$.  Then the following inequality holds:
 $[L_1,\dots, L_1]^{1/n} + [L_2,\dots, L_2]^{1/n} \leq  [L_3,\dots, L_3]^{1/n}$.
 \end{theorem}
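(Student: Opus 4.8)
The plan is to reduce this algebraic Brunn--Minkowski inequality directly to the classical Brunn--Minkowski inequality for convex bodies, using the two statements of Theorem~\ref{main} as the only non-elementary inputs. By Theorem~\ref{main}(1), we have $[L_i,\dots,L_i] = n!\,V(\Delta(L_i))$ for $i=1,2$ and $[L_3,\dots,L_3] = n!\,V(\Delta(L_3))$, where $\Delta(L_i)\subset\Bbb R^n$ are the Newton--Okounkov bodies with respect to the fixed faithful valuation $v$. So the claimed inequality is equivalent, after dividing by $(n!)^{1/n}$, to
\begin{equation*}
V(\Delta(L_1))^{1/n} + V(\Delta(L_2))^{1/n} \leq V(\Delta(L_3))^{1/n}.
\end{equation*}

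The next step is to invoke Theorem~\ref{main}(2): since $L_3 = L_1 L_2$, we have the inclusion $\Delta(L_3) \supset \Delta(L_1) + \Delta(L_2)$ of convex bodies in $\Bbb R^n$. Because volume is monotone with respect to inclusion of convex bodies, this gives $V(\Delta(L_3)) \geq V(\Delta(L_1) + \Delta(L_2))$, hence $V(\Delta(L_3))^{1/n} \geq V(\Delta(L_1)+\Delta(L_2))^{1/n}$. It now suffices to bound $V(\Delta(L_1)+\Delta(L_2))^{1/n}$ from below, and this is exactly the content of the classical Brunn--Minkowski inequality (Theorem~\ref{B-M}) applied to the convex bodies $\Delta_1 = \Delta(L_1)$ and $\Delta_2 = \Delta(L_2)$:
\begin{equation*}
V(\Delta(L_1))^{1/n} + V(\Delta(L_2))^{1/n} \leq V(\Delta(L_1) + \Delta(L_2))^{1/n}.
\end{equation*}
Chaining the two displayed inequalities yields the theorem.

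One technical point to handle carefully is the degenerate case where $\Delta(L_1)$ or $\Delta(L_2)$ is lower-dimensional (so its $n$-dimensional volume is zero), and more generally the case where some $[L_i,\dots,L_i]=0$; in that situation the corresponding term on the left vanishes and the inequality is immediate from monotonicity of the intersection index (or of volume), since $L_i \subset L_3$ up to the relevant equivalence after multiplying by a suitable one-dimensional space, or simply since $\Delta(L_i) \subset \Delta(L_3)$ by Theorem~\ref{main}(2) with the other factor. So no real obstacle arises there. I do not expect any serious obstacle in this proof at all: the substance has been entirely absorbed into Theorem~\ref{main}, whose second statement was sketched above via the valuation inclusions $A_3 \supset A_1 + A_2$, and into the classical Brunn--Minkowski inequality, which (as remarked in Section~\ref{sect6}) is needed in full strength only in dimension two for the deeper Alexandrov--Fenchel-type results but is available in all dimensions here. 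The only thing worth emphasizing in the writeup is that we never need mixed volumes of the NO bodies — only the volumes of $\Delta(L_1)$, $\Delta(L_2)$, and $\Delta(L_3)$ together with the one-sided inclusion — which is precisely why the argument goes through even though the inclusion in Theorem~\ref{main}(2) is generally strict.
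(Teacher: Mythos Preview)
Your proposal is correct and follows exactly the same route as the paper: use Theorem~\ref{main}(1) to rewrite each self-intersection index as $n!$ times the volume of the corresponding Newton--Okounkov body, use Theorem~\ref{main}(2) for the inclusion $\Delta(L_1)+\Delta(L_2)\subset\Delta(L_3)$, and then apply the classical Brunn--Minkowski inequality to the bodies $\Delta(L_1),\Delta(L_2)$. The extra paragraph on degenerate cases is harmless but unnecessary --- both the Brunn--Minkowski inequality and Theorem~\ref{main} already cover bodies of zero $n$-dimensional volume.
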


\begin{proof} Let $\Delta(L_1)$, $\Delta(L_2)$, and $\Delta(L_3)$ be the NO bodies of the spaces $L_1$, $L_2$ and $L_3=L_1L_2$.  By Theorem \ref{main}, we have the following inclusion: $\Delta (L_1)+\Delta(L_2) \subset \Delta (L_3)$. By  the Brunn--Minkovsky inequality, we have $V^{1/n} ( \Delta(L_1)) +  V^{1/n} (\Delta(L_2))\leq V^{1/n} (\Delta(L_3))$. By Theorem \ref{main} for $i=1,2,3$, we have $[L_i,\dots L_i]=n!V( \Delta (L_i))$. These relations imply the theorem.
\end{proof}

\begin{theorem}[Hodge type inequality]\label{hodge} Let $X$ be an irreducible  algebraic surface, let  $L_1, L_2 $ be arbitrary elements in $ K(X)$ and let $L_3$ be their product,$L_3 = L_1L_2$.  Then the following inequality holds:
$[L_1, L_1][L_2, L_2] \leq  [L_1, L_2]^2$.
\end{theorem}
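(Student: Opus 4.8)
The plan is to recognize this as the two-dimensional instance of the general mechanism already assembled in the paper, so that no genuinely new input is needed. First I would recall from Theorem \ref{Dpolar} that on the surface $X$ the self-intersection function $\D(L)=[L,L]$ is a homogeneous polynomial of degree two on the semigroup $K(X)$, whose $2$-polarization is precisely the intersection index $[L_1,L_2]$. Thus the asserted inequality $[L_1,L_1][L_2,L_2]\le [L_1,L_2]^2$ is exactly the Alexandrov--Fenchel type inequality of Definition \ref{polynomial B-Mxy} for the polynomial $Q=\D$ evaluated at the pair $x=L_1$, $y=L_2$.

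Next I would invoke the Brunn--Minkowski type inequality for $K(X)$, that is Theorem \ref{aBM} specialized to $n=2$: with $L_3=L_1L_2$ one has
\[
[L_1,L_1]^{1/2}+[L_2,L_2]^{1/2}\le [L_3,L_3]^{1/2}.
\]
Combined with the nonnegativity of the intersection index (so that $[L_1,L_1]\ge 0$, $[L_2,L_2]\ge 0$, $[L_3,L_3]\ge 0$ and every square root above is meaningful), this says precisely that $Q=\D$ satisfies the Brunn--Minkowski type inequality for the pair $x=L_1$, $y=L_2$ in the sense of Definition \ref{polynomial B-Mxy}.

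Finally I would apply Theorem \ref{B-MA-F}, which states that for a homogeneous polynomial of degree two the Brunn--Minkowski type inequality at a pair $x,y$ is equivalent to the Alexandrov--Fenchel type inequality at that same pair. Unwinding the latter for $Q=\D$, $x=L_1$, $y=L_2$ yields $[L_1,L_1][L_2,L_2]\le [L_1,L_2]^2$, which is the statement. Concretely, squaring the displayed inequality and using the bilinearity relation $[L_3,L_3]=[L_1,L_1]+2[L_1,L_2]+[L_2,L_2]$ gives $[L_1,L_1]^{1/2}[L_2,L_2]^{1/2}\le [L_1,L_2]$ (in particular $[L_1,L_2]\ge 0$), and squaring once more gives the claim.

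I do not expect a real obstacle here: everything substantive has already been done elsewhere, namely statement (2) of Theorem \ref{main} (the inclusion $\Delta(L_1)+\Delta(L_2)\subset\Delta(L_1L_2)$ of Newton--Okounkov bodies) and the planar Brunn--Minkowski inequality. The only care required is to confirm that the positivity hypotheses built into Definition \ref{polynomial B-Mxy} are satisfied, which follows from the nonnegativity of the intersection index, and then to cite Theorems \ref{Dpolar}, \ref{aBM} and \ref{B-MA-F} in the right order.
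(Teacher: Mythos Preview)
Your proposal is correct and follows essentially the same route as the paper: apply Theorem \ref{aBM} in dimension two to obtain the Brunn--Minkowski type inequality for $\D$, then invoke Theorem \ref{B-MA-F} to convert it into the Alexandrov--Fenchel type inequality. Your account is more explicit about the role of Theorem \ref{Dpolar} and the nonnegativity hypotheses in Definition \ref{polynomial B-Mxy}, but the argument is the same.
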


\begin{proof} By Theorem \ref{aBM},   $[L_1,L_1]^{1/2}+[L_2,L_2]^{1/2}\leq [L_1L_2]^{1/2}$. By Theorem \ref{B-MA-F}, this inequality is equivalent to the inequality  $[L_1, L_1][L_2, L_2] \leq  [L_1,L_2]^2$.
\end{proof}

Note that in Theorems \ref{aBM} and  \ref{hodge}   the  irreducibility assumption can not be dropped:  for reducible surfaces the Brunn--Minkowski type inequality and the  Hodge type inequality in general do not hold.

Recall that an element $L\subset  K(X)$ is  called {\it very big} if  the Kodaira
rational map $\Phi_L $ provides  a birational isomorphism between $X$ and its image $Y=\Phi_L(X)$.

\begin{lemma}[Algebraic version of Alexandrov--Fenchel  inequality]\label{weakKht} If $L_3,\dots,L_n$ are very big  subspaces, then
\begin{equation*}[L_1, L_2, L_3,\dots, L_n]^2 \geq [L_1,L_1, L_3, \dots, L_n][L_2,L_2, L_3, \dots, L_n].\end{equation*}
\end{lemma}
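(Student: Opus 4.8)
The plan is to reduce the general Alexandrov--Fenchel-type inequality in $n$ variables to the two-dimensional Hodge-type inequality (Theorem~\ref{hodge}) by a slicing/projection argument that uses the very bigness of $L_3,\dots,L_n$. First I would replace $X$ by the image $Y = \Phi_{L_n}(X)$ under the Kodaira map of a very big space, say $L_n$; since $L_n$ is very big, $\Phi_{L_n}\colon X \dashrightarrow Y$ is a birational isomorphism, and this identification preserves the semigroup $K(X)\cong K(Y)$ and the intersection index. Thus without loss of generality we may assume that $L_n$ is very ample on a projective model. Then, picking a generic section $f_n \in L_n$, its zero divisor cuts out an irreducible $(n-1)$-dimensional subvariety $Z$, and the restriction map $L \mapsto L|_Z$ sends $K(X)$ to $K(Z)$ in a way compatible with products. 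The key geometric fact to verify is that for the restricted spaces one has $[L_1,\dots,L_{n-1}]_X = [L_1|_Z,\dots,L_{n-1}|_Z]_Z$ for generic $f_n$; this is essentially the definition of intersection index via transversal zeros, since a generic zero of $(f_1,\dots,f_n)$ with $f_i \in L_i$ lies on $Z = \{f_n = 0\}$ and the remaining $n-1$ equations cut it out transversally in $Z$.

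Next I would iterate: using that $L_3,\dots,L_{n-1}$ remain very big (or at least that one can arrange their restrictions to still define birational-enough maps, possibly after passing to a further model), slice down by generic members of $L_n, L_{n-1}, \dots, L_3$ successively to land on an irreducible algebraic surface $X'$, on which we are left with the three spaces $L_1|_{X'}, L_2|_{X'}$ and the product $L_1|_{X'} L_2|_{X'}$. On $X'$, Theorem~\ref{hodge} gives exactly
\[
[L_1|_{X'}, L_1|_{X'}]\,[L_2|_{X'}, L_2|_{X'}] \le [L_1|_{X'}, L_2|_{X'}]^2,
\]
and unwinding the slicing identities step by step turns the three surface intersection numbers into $[L_1,L_1,L_3,\dots,L_n]$, $[L_2,L_2,L_3,\dots,L_n]$ and $[L_1,L_2,L_3,\dots,L_n]$ respectively. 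This yields the claimed inequality.

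Alternatively, and perhaps more in the spirit of the paper, one could avoid slicing and instead argue purely via Newton--Okounkov bodies: choose a faithful $\mathbb{Z}^n$-valuation adapted so that the NO bodies of the very big spaces $L_3,\dots,L_n$ behave like "large" bodies, express all the intersection numbers in the inequality through volumes and mixed volumes of the NO bodies $\Delta(L_1),\Delta(L_2),\Delta(L_3),\dots,\Delta(L_n)$ via Theorem~\ref{main} and Theorem~\ref{ONBKK}, and then invoke the classical Alexandrov--Fenchel inequality for convex bodies (Theorem~\ref{A-F}) — or really just its two-dimensional incarnation combined with a fibration argument on the bodies. The obstacle with this route is that the inclusion $\Delta(L_1 L_2) \supset \Delta(L_1) + \Delta(L_2)$ in Theorem~\ref{main} is in general strict, so the volume of an NO body of a product is not the mixed volume of the factors' bodies; one needs the very bigness of $L_3,\dots,L_n$ precisely to control this discrepancy, e.g.\ to ensure the relevant NO bodies have full dimension and that the "error" in the inclusion does not affect the top-degree mixed-volume terms.

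The step I expect to be the main obstacle is establishing the slicing identity rigorously, i.e.\ that restricting to a generic member of a very big linear system both (a) keeps the sliced variety irreducible, and (b) exactly decrements the intersection index, with the restricted spaces $L_i|_Z$ landing in $K(Z)$ and the very bigness being inherited far enough down the chain to reach an irreducible surface. Handling the base loci $B_{L_i}$ and pole sets $P_{L_i}$ (one must choose the generic section $f_n$ so that $Z$ meets the admissible open set nicely and so that none of the $L_i|_Z$ degenerate) is the delicate bookkeeping, but it is exactly the kind of argument already implicit in the proof of Theorem~\ref{index} and in the theory of Newton--Okounkov bodies in \cite{K-K12}, so no genuinely new idea is needed.
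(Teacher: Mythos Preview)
Your proposal is correct and follows essentially the same approach as the paper: slice by generic members of $L_3,\dots,L_n$ down to an irreducible surface, then invoke the Hodge-type inequality (Theorem~\ref{hodge}) there. The paper does the slicing all at once rather than iteratively---it takes a generic $\mathbf{f}=(f_3,\dots,f_n)$, defines the surface $X_{\mathbf f}=\{f_3=\dots=f_n=0\}$ inside an admissible open set, and uses the Bertini--Lefschetz theorem (this is exactly where very bigness enters) to get irreducibility in one step, which spares you the worry about whether very bigness is inherited along the chain of restrictions.
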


Let us sketch a  proof of Lemma \ref{weakKht} (more complete presentation  can be found in  \cite{K-K12}).
Let $U\subset X$ be an admissible set for $L_1,\dots,L_n$ and let $\bold f=(f_3,\dots,f_n)$ be a generic vector function such that $f_3\in L_3,\dots, f_n\in L_n$. Then the variety $X_{\bold f}$ defined in $U$ by $f_3=\dots=f_n=0$ is a smooth surface.

The assumption that the spaces $L_3,\dots,L_n$ are very big allows us to apply  the  Bertini--Lefschetz theorem on irreducibility and to show that the surface $X_{\bold f}$ is irreducible. Lemma \ref{weakKht} follows from the Hodge type inequality for the restriction of spaces $L_1,L_2$ to  the  irreducible surface $X_{\bold f}$.

\begin{remark} In Lemma \ref{weakKht},   the assumption that the spaces $L_1,\dots,L_n$ are very big can be dropped (see Corollary \ref{algAF}). Note that Lemma \ref{weakKht} and Theorem \ref{strongKht} can be proven using only the Brunn--Minkowski inequality  in dimension two. All other geometric type inequalities follow from inequality (\ref{nefaf}).
\end{remark}

\subsection{Inequalities for psef  type elements in the group   $G(X)$}

In the Grothendieck group $G(X)$ of the semigroup  $K(X)$, one can define nef elements. For such elements the Alexandrov--Fenchel type inequality holds. So all inequalities from convex  geometry that follow from the Alexandrov--Fenchel inequality (see Corollaries \ref{AFcor1} and \ref{AFcor2}) also hold for nef elements in $G(X)$.

\begin{definition} An element $\mathcal L$ of the Grothendieck group  $G(X)$ is called
\begin{enumerate}
\item   {\it a very big element} if it can be represented by a  very big element element $L\in K(X)$;
\item {\it a big element} if there is a natural number $k$ such that $\mathcal L^k$ is a very big element;

\item   {\it a  psef type element} if there is a   big element  $\mathcal L'$ such that  for every natural number $q$ the element $\mathcal L^q\mathcal L'$ is  big.
    \end{enumerate}
    \end{definition}

\begin{lemma} Very big elements, big elements and psef type elements  in the Grothendieck  group $G(X)$ form  sub-semigroups.
\end{lemma}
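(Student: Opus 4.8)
The plan is to verify the three closure claims separately, starting from the definitions of very big, big, and psef type elements and using only the semigroup structure of $K(X)$ together with the fact (from Lemma~\ref{invertible}) that one-dimensional spaces are exactly the invertible elements. For very big elements the key observation is that if $L_1,L_2\in K(X)$ are very big, so that the Kodaira maps $\Phi_{L_1}$ and $\Phi_{L_2}$ are birational onto their images, then $\Phi_{L_1L_2}$ is also birational: the field $\bold k(X)$ is generated over $\bold k$ by the ratios of elements of $L_1$ (respectively $L_2$), hence a fortiori by the ratios of elements of $L_1L_2$, since these include all products $f g/f' g'$ with $f,f'\in L_1$, $g,g'\in L_2$, and in particular all $f/f'$ (taking $g=g'$) and all $g/g'$. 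Thus $\Phi_{L_1L_2}$ separates the same points generically and is birational onto its image, so $L_1L_2$ is very big. Since the identity $\bold 1$ is trivially very big (its Kodaira map is the map to a point, which is not birational unless $n=0$ — so one should be slightly careful here and either note that $n\ge 1$ or observe that the semigroup claim only requires closure under multiplication, not the presence of an identity), the very big elements form a sub-semigroup of $G(X)$.

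Next I would handle big elements. Suppose $\mathcal L_1,\mathcal L_2\in G(X)$ are big, so there are natural numbers $k_1,k_2$ with $\mathcal L_1^{k_1}$ and $\mathcal L_2^{k_2}$ very big. Set $k=k_1k_2$. Then $\mathcal L_1^{k}=(\mathcal L_1^{k_1})^{k_2}$ is a power of a very big element, and likewise $\mathcal L_2^{k}$; by the very big case just proven, a product of very big elements is very big, so both $\mathcal L_1^k$ and $\mathcal L_2^k$ are very big, and hence $(\mathcal L_1\mathcal L_2)^k=\mathcal L_1^k\mathcal L_2^k$ is very big. Thus $\mathcal L_1\mathcal L_2$ is big, and the big elements form a sub-semigroup.

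Finally, for psef type elements, let $\mathcal L_1,\mathcal L_2$ be psef type, witnessed by big elements $\mathcal L_1',\mathcal L_2'$ such that $\mathcal L_1^q\mathcal L_1'$ and $\mathcal L_2^q\mathcal L_2'$ are big for every natural $q$. I claim $\mathcal L_1\mathcal L_2$ is psef type with witness $\mathcal L_1'\mathcal L_2'$, which is big by the previous paragraph. Indeed, for any natural $q$ we have $(\mathcal L_1\mathcal L_2)^q\mathcal L_1'\mathcal L_2'=(\mathcal L_1^q\mathcal L_1')(\mathcal L_2^q\mathcal L_2')$, a product of two big elements, hence big. So $\mathcal L_1\mathcal L_2$ is psef type, and the psef type elements form a sub-semigroup.

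The main obstacle, and the only place requiring genuine care rather than bookkeeping, is the very big case — specifically the claim that $\Phi_{L_1L_2}$ is birational when $\Phi_{L_1}$ and $\Phi_{L_2}$ are. The clean way to phrase it is via function fields: being very big is equivalent to saying that the subfield of $\bold k(X)$ generated by all ratios $f/f'$ of nonzero elements $f,f'\in L$ equals $\bold k(X)$, and this subfield can only grow when one passes from $L$ to a multiple $L L'$ (since $L L'$ contains $f g$ for every fixed nonzero $g\in L'$, so all the ratios $f/f'$ reappear as $(fg)/(f'g)$). Once this characterization is recorded, the monotonicity is immediate and the rest of the lemma is formal. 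One should also note the trivial edge case that $\bold 1$ itself is not very big (for $n\ge 1$), so the sub-semigroup of very big elements has no identity element; this does not affect the statement, since a sub-semigroup need not contain $0$.
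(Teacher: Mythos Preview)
Your proof is correct and follows essentially the same three-step approach as the paper: reduce the big case to the very big case via the exponent $k_1k_2$, and the psef case to the big case via the witness $\mathcal L_1'\mathcal L_2'$. You supply the function-field justification for why $L_1L_2$ is very big when $L_1,L_2$ are (the paper simply asserts this without argument), and your aside on the identity element is harmless extra detail the paper omits.
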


\begin{proof} If $\mathcal L_1$ and $ \mathcal L_2$ are represented by very big spaces $L_1$ and $L_2$, then $\mathcal L_1 \mathcal L_2$ is represented by a very big space $L_1 L_2$.
If $\mathcal L_1^k$ and $\mathcal L_2^m$ are very big elements, then $(\mathcal L_1 \mathcal L_2)^{km}$ is a very big element. If $\mathcal L^q_1  \mathcal L_1'$ and $\mathcal L_2^q \mathcal L_2'$ are big elements, then $(\mathcal L_1 \mathcal L_2)^q (\mathcal L_1' \mathcal L_2')$ are big elements.
\end{proof}

 \begin{lemma} Any element $L$ in  the semigroup $K(X)$ represents a psef type element in its Grothendieck group $G(X)$.
 \end{lemma}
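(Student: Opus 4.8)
The plan is to show that $\pi(L)\in G(X)$ satisfies the definition of a psef type element by producing, once and for all, a single very big space $M\in K(X)$ and then using it as the witness $\mathcal L'$. The whole argument rests on two elementary stability properties of very-bigness: a space in $K(X)$ that contains a very big space is very big, and multiplying a space by a one-dimensional (invertible) space does not change its Kodaira map; both will be read off from the description of the function field of a Kodaira image as the subfield of $\bold k(X)$ generated by the ratios of the elements of the space.

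First I would construct a very big element $M\in K(X)$. Since $X$ is an irreducible variety, $\bold k(X)$ is finitely generated over $\bold k$; choosing field generators $u_1,\dots,u_r$ and letting $M$ be the $\bold k$-span of $1,u_1,\dots,u_r$, we get $M\in K(X)$, and the Kodaira map sends a generic point $x$ to $[1:u_1(x):\dots:u_r(x)]$. The function field of the closure $Y$ of its image is the subfield of $\bold k(X)$ generated by the ratios of these coordinates; it contains $u_1,\dots,u_r$ and hence equals $\bold k(X)$, so $\Phi_M:X\dashrightarrow Y$ is birational, i.e. $M$ is very big. Then $\mathcal L':=\pi(M)\in G(X)$ is a very big element, hence in particular a big element.

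Next I would check that $L^qM$ is very big for every $q\ge 1$ (for $q=0$ the product is just $M$, already very big). Fix a nonzero $f\in L$ and let $(f^q)\subset K(X)$ be the one-dimensional space spanned by $f^q$; then $(f^q)M\subset L^qM$. The maps $\Phi_{(f^q)M}$ and $\Phi_M$ agree on the dense open set where $f\ne 0$ and $\Phi_M$ is defined, so they coincide as rational maps and $(f^q)M$ is very big. Since any space in $K(X)$ containing a very big space is very big (the subfield of $\bold k(X)$ generated by the ratios of its elements is still all of $\bold k(X)$), it follows that $L^qM$ is very big for all $q\ge 0$.

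Finally I would assemble the pieces: because $\pi:K(X)\to G(X)$ is a homomorphism, $\pi(L)^q\mathcal L'=\pi(L^qM)$, which is a very big — hence big — element of $G(X)$ for every $q$; together with the fact that $\mathcal L'$ is big, this is precisely the requirement for $\pi(L)$ to be of psef type. I do not expect a genuine obstacle here: the only points needing care are the two stability properties of very-bigness quoted above, and each is immediate from identifying the function field of a Kodaira image with the subfield of $\bold k(X)$ generated by ratios of elements of the space.
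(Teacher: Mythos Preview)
Your proof is correct and follows essentially the same line as the paper's: pick a very big $L'\in K(X)$ and observe that $L^qL'$ is very big for every $q$, hence $\pi(L)$ is of psef type. The paper states this in one sentence without justifying the existence of a very big space or the stability of very-bigness under multiplication by arbitrary $L^q$; you supply exactly those details (via the explicit construction of $M$ from field generators and the containment $(f^q)M\subset L^qM$), which is a reasonable way to unpack the assertion.
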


\begin{proof} If  $L,L'\in K(X)$  and $L'$ is a very big space, then  for any natural number $q$ the product  $L^qL'$ is also  a very big space.
\end{proof}

Let us prove the Alexandrov--Fenchel type  inequality  for psef type elements in $G(X)$.

\begin{theorem}\label{strongKht} If $\mathcal L_1, \mathcal L_2, \mathcal L_3,\dots,\mathcal L_n$ are
psef type elements in the group $G(X)$, then
\begin{equation}\label{nefaf}
[\mathcal L_1, \mathcal L_2,\mathcal  L_3,\dots, \mathcal L_n]^2 \geq [\mathcal L_1, \mathcal L_1, \mathcal  L_3, \dots,\mathcal L_n] [ \mathcal L_2, \mathcal L_2,   \mathcal L_3, \dots, \mathcal  L_n].
\end{equation}
\end{theorem}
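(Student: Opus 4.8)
The plan is to reduce the Alexandrov--Fenchel type inequality \eqref{nefaf} for psef type elements to the already-established algebraic Alexandrov--Fenchel inequality for very big spaces (Lemma~\ref{weakKht}), via two successive limiting arguments. First I would handle the case when $\mathcal L_1,\mathcal L_2,\mathcal L_3,\dots,\mathcal L_n$ are all \emph{big} elements. By definition some power $\mathcal L_i^{k_i}$ is very big; replacing $\mathcal L_i$ by $\mathcal L_i^{k_i}$ multiplies both sides of \eqref{nefaf} by a common positive factor (by multilinearity of the intersection index), so it is harmless, and one is reduced to the case of very big elements. For very big elements, each $\mathcal L_i$ is represented by a \emph{very big space} $L_i\in K(X)$, and then \eqref{nefaf} is exactly Lemma~\ref{weakKht} applied with $L_3,\dots,L_n$ very big (the hypotheses of that lemma require only $L_3,\dots,L_n$ to be very big). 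So the big case follows immediately.

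Next I would pass from big elements to psef type elements by a perturbation/limit argument, exactly paralleling the way one deduces the Alexandrov--Fenchel inequality for nef divisors from the ample case. Let $\mathcal L_1,\dots,\mathcal L_n$ be psef type. By the definition of psef type, for each $i$ there is a big element $\mathcal L_i'$ such that $\mathcal L_i^q\,\mathcal L_i'$ is big for all $q\geq 1$; equivalently $\mathcal L_i + \tfrac1q\mathcal L_i'$ is ``big'' in the rational sense, so that the $n$-tuple $(\mathcal L_1^q\mathcal L_1',\dots,\mathcal L_n^q\mathcal L_n')$ consists of big elements for every $q$. Apply the already-proven big case to this $n$-tuple. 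The intersection index is a homogeneous polynomial of degree $n$ in each slot (Theorem~\ref{Dpolar}, extended to $G(X)$), hence each of the three quantities $[\,\cdot\,]$ appearing in \eqref{nefaf} is, after dividing by $q^{n}$ (or the appropriate power), a polynomial in $1/q$ whose constant term is the corresponding intersection number of $\mathcal L_1,\dots,\mathcal L_n$. Letting $q\to\infty$, the inequality
\begin{equation*}
[\mathcal L_1^q\mathcal L_1',\mathcal L_2^q\mathcal L_2',\dots]^2 \geq [\mathcal L_1^q\mathcal L_1',\mathcal L_1^q\mathcal L_1',\dots]\,[\mathcal L_2^q\mathcal L_2',\mathcal L_2^q\mathcal L_2',\dots]
\end{equation*}
passes to the limit and yields \eqref{nefaf}. (One should be a little careful to normalize correctly: multiply the $i$-th slot by $q$ first, so that $\mathcal L_i^q\mathcal L_i' = q(\mathcal L_i + \tfrac1q\mathcal L_i')$ in the $\mathbb Q$-vector space $M(X)$, and then the three indices are genuine polynomials in $1/q$ with the desired constant terms.)

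The main obstacle is the limiting step: one must know that the relevant ``approximants'' $\mathcal L_i^q\mathcal L_i'$ are genuinely big (so that the big case applies), which is precisely what the definition of psef type is engineered to guarantee, and one must know the intersection index extends to a continuous (indeed polynomial) function on the $\mathbb Q$-vector space $M(X)=G(X)\otimes_{\mathbb Z}\mathbb Q$ so that the inequality survives the limit. Both facts are available: the polynomial extension is Lemma~\ref{red} together with Theorem~\ref{maint}, and the bigness of $\mathcal L_i^q\mathcal L_i'$ is the defining property. The only genuinely new geometric input beyond what has already been proven is Lemma~\ref{weakKht}, whose proof in turn rests only on the Hodge type inequality on surfaces (Theorem~\ref{hodge}) and hence ultimately on the two-dimensional Brunn--Minkowski inequality; no further convex or algebraic geometry is needed.
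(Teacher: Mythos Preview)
Your proposal is correct and follows essentially the same three-step reduction as the paper: very big $\Rightarrow$ big $\Rightarrow$ psef type, with the last step handled by a polynomial-in-$q$ (equivalently, limit as $q\to\infty$) argument. The only cosmetic differences are that the paper replaces the individual perturbations $\mathcal L_i'$ by the single product $\mathcal L=\mathcal L_1'\cdots\mathcal L_n'$ before forming $\mathcal L_i(q)=\mathcal L_i^q\mathcal L$, and phrases the final step as a comparison of leading $q^{2n}$-coefficients rather than as a limit in $1/q$; neither change is essential.
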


\begin{proof} If each  $\mathcal L_i$ is a very big element in $G(X)$, then theorem follows from Lemma \ref{weakKht}, since the intersection index respects the equivalence $\sim$ in $K(X)$.

Assume that $\mathcal L_1,\dots,\mathcal L_n$ are big elements, i.e., for some natural number $k_1,\dots,k_n$ the elements $\mathcal L_1^{k_1},\dots,\mathcal L_n^{k_n}$ are very big  elements. Put $k=k_1\cdot \ldots \cdot k_n$. Then elements $\mathcal L_1^k,\dots,\mathcal L_n^k$ are very big spaces. Thus, for the $n$-tuple of spaces $\mathcal L_1^k,\dots,\mathcal L_n^k$  inequality (\ref{nefaf}) holds. It implies inequality (\ref{nefaf}) for the $n$-tuple $\mathcal L_1,\dots,\mathcal L_n$, since the intersection index is multi-linear.

Assume that  $\mathcal L_i$ are  psef type elements, i.e, there are  big elements $\mathcal L_i'$ such that for any natural number $q$ all elements $\mathcal L_i^q\mathcal L_i'$ are  big. Put $\mathcal L=\mathcal L_1'\cdot \ldots \cdot \mathcal L_n'$. Then all elements $\mathcal L_i^q\mathcal L$ are  big.
Instead of elements
$\mathcal L_1,...,\mathcal L_n$,
let us consider the elements  $\mathcal L_1(q) =\mathcal L_1^q \mathcal L$ $,\dots$,   $\mathcal L_n(q) =\mathcal L_n^q \mathcal L$
for every  natural number  $q$. For the element $\mathcal L_1(q),\dots, \mathcal L_n(q)$ the Alexandrov--Fenchel type inequality holds, i.e., for every natural number $q$, we  have $A(q)\geq B(q)$, where $A(q)$ and $B(q)$ are the following polynomials in $q$:
\begin{equation*}A(q)=[\mathcal L_1(q),\mathcal L_2(q),\mathcal  L_3(q),\dots,\mathcal  L_n(q)]^2; \end{equation*}  \begin{equation*}B(q)= [\mathcal L_1(q), \mathcal L_1(q), \mathcal  L_3(q), \dots,\mathcal L_n(q)][\mathcal L_2(q), \mathcal L_2(q), \mathcal L_3 (q), \dots, \mathcal  L_n (q)].\end{equation*}

The polynomials $A(q)$ and $B(q)$ have the following  leading terms  $A_{2n}$, $B_{2n}$:  \begin{equation*}A_{2n}=q^{2n}  [\mathcal L_1, \mathcal L_2, \mathcal L_3,\dots, \mathcal L_n]^2;\end{equation*}
\begin{equation*}B_{2n}=q^{2n} [\mathcal L_1,\mathcal L_1, \mathcal  L_3, \dots,\mathcal L_n] [\mathcal L_2,\mathcal L_2, \mathcal  L_3, \dots, \mathcal  L_n].\end{equation*}  The inequalities $A(q)\geq B(q)$  imply that $A_{2n}\geq B_{2n}$ which proves inequality (\ref{nefaf}).
\end{proof}

Corollaries \ref{AFcor1} and \ref{AFcor2} provide examples of statements which formally follow from the Alexandrov--Fenchel inequality. Similarly, the following analogues hold for intersection indices.

\begin{corollary} For any $2\leq m\leq n$ and for any   $n$-tuple of  psef type elements  $\mathcal L_1,\dots, \mathcal L_n \in G(X)$, we have:

\begin{equation}\label{alcor1}
 \prod_{1\leq i\leq m} [\mathcal L_i,\dots,\mathcal L_i, \mathcal L_{m+1}, \dots, \mathcal L_n]\leq [\mathcal L_1,\dots,\mathcal L_n]^m.
 \end{equation}
\end{corollary}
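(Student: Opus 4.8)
The plan is to deduce (\ref{alcor1}) from the Alexandrov--Fenchel type inequality (\ref{nefaf}) of Theorem \ref{strongKht} by a purely formal argument, parallel to the classical deduction of Corollary \ref{AFcor1} from Theorem \ref{A-F}. I would write $\mathcal C=(\mathcal L_{m+1},\dots,\mathcal L_n)$ for the common tail and, for $\mathcal M\in G(X)$ and an integer $r\geq 0$, let $\mathcal M^{[r]}$ denote the string of $r$ copies of $\mathcal M$. Every intersection index occurring below is taken of a string built only from the given psef type elements (no new elements are formed), so Theorem \ref{strongKht} is available for all of them; I will also use that such an index is nonnegative, which follows by the same $q\to\infty$ reasoning as in the proof of Theorem \ref{strongKht}. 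Consequently, if some factor $[\mathcal L_i^{[m]},\mathcal C]$ on the left of (\ref{alcor1}) vanishes, the inequality is trivial, and I may assume throughout that all indices in sight are strictly positive.

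First I would extract from (\ref{nefaf}) a ``slotwise'' log-concavity. Fix an index $i$ and, for $0\leq j\leq m$, put $a_j=[\mathcal L_1^{[j]},\mathcal L_i^{[m-j]},\mathcal C]$. Applying (\ref{nefaf}) with distinguished pair $(\mathcal L_1,\mathcal L_i)$ and the remaining $n-2$ slots filled by $\mathcal L_1^{[j-1]},\mathcal L_i^{[m-j-1]},\mathcal C$ gives $a_j^2\geq a_{j+1}a_{j-1}$ for $1\leq j\leq m-1$. Hence $j\mapsto\log a_j$ is concave, and comparison with the affine function through its endpoints yields, for $j=1$,
\begin{equation*}
[\mathcal L_1,\mathcal L_i^{[m-1]},\mathcal C]=a_1\ \geq\ a_0^{(m-1)/m}\,a_m^{1/m}=[\mathcal L_i^{[m]},\mathcal C]^{(m-1)/m}\,[\mathcal L_1^{[m]},\mathcal C]^{1/m}.
\end{equation*}

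Next I would induct on $m$. The case $m=2$ of (\ref{alcor1}) is exactly (\ref{nefaf}). For the step, apply the already-established case $m-1$ of (\ref{alcor1}) to the $n$-tuple reordered so that $\mathcal L_1$ is moved into the tail, i.e. with distinguished bodies $\mathcal L_2,\dots,\mathcal L_m$ and tail $(\mathcal L_1,\mathcal C)$; by symmetry of the intersection index this reads
\begin{equation*}
[\mathcal L_1,\dots,\mathcal L_n]^{\,m-1}\ \geq\ \prod_{i=2}^{m}[\mathcal L_1,\mathcal L_i^{[m-1]},\mathcal C].
\end{equation*}
Bounding each factor on the right by the slotwise estimate above and multiplying out, the right-hand side becomes $\big(\prod_{i=1}^{m}[\mathcal L_i^{[m]},\mathcal C]\big)^{(m-1)/m}$; raising both sides to the power $m/(m-1)$ gives (\ref{alcor1}).

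I do not expect a genuine obstacle: the entire content is already in Theorem \ref{strongKht}, and what remains is bookkeeping --- matching indices under the reordering, checking that the product of the $m-1$ slotwise bounds collapses to the claimed expression, and dismissing the degenerate cases by nonnegativity of the intersection index. The one point to handle with a little care is the passage from $a_j^2\geq a_{j+1}a_{j-1}$ to the interpolation inequality, which needs the $a_j$ to be positive --- which is precisely why the vanishing cases were separated off at the start.
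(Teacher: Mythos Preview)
Your approach is exactly the one the paper intends: it gives no detailed proof but states that the corollary follows formally from the Alexandrov--Fenchel type inequality (\ref{nefaf}) in the same way Corollary~\ref{AFcor1} follows from Theorem~\ref{A-F}, and you carry out precisely that formal deduction by induction on $m$.

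There is, however, one point where your argument is incomplete. You assert that once the endpoint factors $[\mathcal L_i^{[m]},\mathcal C]$ are positive you ``may assume throughout that all indices in sight are strictly positive,'' and you rely on this to pass from the chain $a_j^2\geq a_{j-1}a_{j+1}$ to the interpolation $a_1\geq a_0^{(m-1)/m}a_m^{1/m}$. But positivity of the endpoints does not force positivity of the intermediate terms from log-concavity alone: for $m=3$ the sequence $a_0=a_3=1$, $a_1=a_2=0$ satisfies every inequality $a_j^2\geq a_{j-1}a_{j+1}$ yet violates the interpolation. The standard remedy --- and the one implicit in the paper's parallel with the convex-geometry case, where one approximates by full-dimensional bodies --- is the same approximation already used in the proof of Theorem~\ref{strongKht}: replace each $\mathcal L_i$ by $\mathcal L_i^q\mathcal L$ with $\mathcal L$ very big, establish (\ref{alcor1}) for very big spaces (where all mixed indices are strictly positive, e.g.\ via the Bertini reduction of Lemma~\ref{weakKht} combined with Theorem~\ref{hodge}), and then let $q\to\infty$. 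This is routine, but separating off only the vanishing-endpoint cases does not by itself justify the interpolation step.
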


For $m=2$, inequality (\ref{alcor1}) coincides with inequality (\ref{nefaf}). For $m=n$, inequality (\ref{alcor1}) is symmetric in the psef type elements  $\mathcal L_1,\dots,\mathcal L_n$.

\begin{corollary}  For any $2\leq m\leq n$ and for any collection of psef type elements  $\mathcal L_1, \mathcal L_2$, $\mathcal L_{m+1},\dots,\mathcal L_n$, we have:
\begin{equation}\label{nefcor2}\begin{aligned}
[\mathcal L_1,\dots,\mathcal L_1,\mathcal L_{m+1},\dots,\mathcal L_n]^{\frac{1}{m}}+ [\mathcal L_2,\dots,\mathcal L_2,\mathcal L_{m+1},\dots,\mathcal L_n]^{\frac{1}{m}}\\
\leq  [\mathcal L_1\mathcal L_2,\dots,\mathcal L_1\mathcal L_2,\mathcal L_{m+1},\dots,\mathcal L_n]^{\frac{1}{m}}.\end{aligned}
\end{equation}
\end{corollary}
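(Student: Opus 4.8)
The plan is to deduce the inequality (\ref{nefcor2}) from the Alexandrov--Fenchel type inequality (\ref{nefaf}) of Theorem \ref{strongKht} by exactly the same formal argument that derives Corollary \ref{AFcor2} from the Alexandrov--Fenchel inequality (Theorem \ref{A-F}); indeed, for $m=2$ the assertion is precisely Theorem \ref{B-MA-F} applied to the degree two homogeneous polynomial $\mathcal L\mapsto[\mathcal L,\mathcal L,\mathcal L_3,\dots,\mathcal L_n]$ on the sub-semigroup of psef type elements, whose Alexandrov--Fenchel type inequality is (\ref{nefaf}). First I would fix the tail $\mathcal L_{m+1},\dots,\mathcal L_n$ once and for all and, for two psef type elements $\mathcal A,\mathcal B\in G(X)$, introduce the finite sequence
\begin{equation*}
V_j=[\underbrace{\mathcal A,\dots,\mathcal A}_{m-j},\underbrace{\mathcal B,\dots,\mathcal B}_{j},\mathcal L_{m+1},\dots,\mathcal L_n],\qquad 0\leq j\leq m .
\end{equation*}
Since psef type elements form a sub-semigroup of $G(X)$ and the intersection index is symmetric and multi-linear, expanding each of the $m$ occurrences of $\mathcal A\mathcal B$ gives $[\underbrace{\mathcal A\mathcal B,\dots,\mathcal A\mathcal B}_{m},\mathcal L_{m+1},\dots,\mathcal L_n]=\sum_{j=0}^{m}\binom{m}{j}V_j$, while $V_0$ and $V_m$ are exactly the two terms on the left-hand side of (\ref{nefcor2}).

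The key step is the log-concavity $V_j^2\geq V_{j-1}V_{j+1}$ for $1\leq j\leq m-1$. This is nothing but the inequality (\ref{nefaf}) in which the two distinguished slots carry one copy of $\mathcal A$ and one copy of $\mathcal B$, while the remaining $n-2$ slots carry $m-j-1$ copies of $\mathcal A$, $j-1$ copies of $\mathcal B$, and the fixed tail. Granting for the moment that all $V_j$ are strictly positive, log-concavity of the sequence $(V_j)$ yields $V_j\geq V_0^{(m-j)/m}V_m^{j/m}$, and then the binomial theorem gives
\begin{equation*}
\Bigl[\underbrace{\mathcal A\mathcal B,\dots,\mathcal A\mathcal B}_{m},\mathcal L_{m+1},\dots,\mathcal L_n\Bigr]=\sum_{j=0}^{m}\binom{m}{j}V_j\geq\sum_{j=0}^{m}\binom{m}{j}V_0^{(m-j)/m}V_m^{j/m}=\bigl(V_0^{1/m}+V_m^{1/m}\bigr)^{m},
\end{equation*}
and taking $m$-th roots is precisely (\ref{nefcor2}).

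The main obstacle I anticipate is removing the positivity hypothesis and coping with vanishing terms $V_j$; I expect it to be handled exactly as in the proof of Theorem \ref{strongKht}. Namely, replace each of $\mathcal L_1,\mathcal L_2,\mathcal L_{m+1},\dots,\mathcal L_n$ by $\mathcal L_i^{q}\mathcal L$ with $\mathcal L$ a fixed big element, so that the substituted elements are big and, after raising to a common power, very big, hence represented by basepoint free and big (so nef and big) divisors on a projective model of $X$; there each $V_j$ is an honest intersection number of nef divisors and is nonnegative, and a small perturbation inside the cone of nef and big classes makes all the $V_j$ strictly positive, so the displayed chain of inequalities applies. Running this for the family depending on $q$ and letting $q\to\infty$ — i.e.\ comparing the leading coefficients in $q$ of both sides of (\ref{nefcor2}) for the substituted elements, each side being $q^{n/m}$ times the corresponding quantity for $\mathcal L_1,\mathcal L_2,\mathcal L_{m+1},\dots,\mathcal L_n$ plus lower order in $q$ — recovers (\ref{nefcor2}) for the original psef type elements. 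The boundary cases $V_0=0$ or $V_m=0$ are trivial: they reduce to the monotonicity bound $\sum_j\binom{m}{j}V_j\geq V_m$ (respectively $\geq V_0$), valid because every $V_j\geq 0$.
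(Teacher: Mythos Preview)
Your plan is correct and matches the paper's intended proof: the paper gives no detailed argument for this corollary, only the remark that, like the preceding corollary (\ref{alcor1}), it follows formally from the Alexandrov--Fenchel type inequality (\ref{nefaf}) of Theorem \ref{strongKht} by the same mechanism that derives Corollary \ref{AFcor2} from Theorem \ref{A-F}. Your log-concavity--plus--binomial computation is exactly that mechanism.

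One remark on the positivity issue you flag. Rather than perturbing to force all $V_j>0$, it is cleaner to invoke the preceding corollary (\ref{alcor1}) directly: applied with the $m$ distinguished entries taken to be $m-j$ copies of $\mathcal L_1$ and $j$ copies of $\mathcal L_2$, it yields $V_0^{\,m-j}V_m^{\,j}\leq V_j^{\,m}$, hence $V_j\geq V_0^{(m-j)/m}V_m^{j/m}$, with no positivity hypothesis beyond $V_j\geq 0$ (which holds for psef type elements by the same limiting argument you describe, exactly as in the proof of Theorem \ref{strongKht}). This sidesteps a genuine subtlety in your outline: bare log-concavity of a nonnegative sequence does \emph{not} by itself force the interpolation bound when intermediate terms vanish (e.g.\ $1,0,0,1$ is log-concave), so your perturbation step would still need an independent reason why all $V_j$ become strictly positive for big elements --- and the most natural such reason is again (\ref{alcor1}) with $m=n$.
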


For $m=n$, inequality (\ref{nefcor2})
provides the  Brunn--Minkowski  type inequality for psef type  elements in $G(X)$.

\begin{corollary}\label{algAF} The inequalities (\ref{nefaf}), (\ref{alcor1}) and (\ref{nefcor2}) hold for every elements $L_1,\dots, L_n\in K(X)$,
since they represent psef type elements in the group $G(X)$. Thus, Lemma \ref{weakKht} holds for any $n$-tuple of spaces $L_1,\dots, L_n\in K(X)$ (without the  assumption that these spaces  are very big spaces).
\end{corollary}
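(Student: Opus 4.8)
The plan is to deduce the statement formally from Theorem~\ref{strongKht}, using the lemma of this subsection asserting that every space $L\in K(X)$ represents a psef type element of $G(X)$ under the natural homomorphism $\pi:K(X)\to G(X)$. The only substantive point to nail down is that $\pi$ intertwines the intersection index on $K(X)$ with the one on $G(X)$; everything else is bookkeeping.

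First I would recall that, by Theorem~\ref{Dpolar} and Lemma~\ref{formula}, the birationally invariant intersection index $[L_1,\dots,L_n]$ on $K(X)$ is the $n$-polarization of the homogeneous degree $n$ polynomial $\D$ on $K(X)$. Being a polynomial, $\D$ respects the equivalences $\sim$ and $\sim_t$ (Lemmas \ref{polynomial}, \ref{red}), hence extends uniquely to $G_s(X)$ and to $G(X)$; by Theorem~\ref{maint} the polarization of the extended polynomial, evaluated at the images $\pi(L_1),\dots,\pi(L_n)$, returns exactly $[L_1,\dots,L_n]$. Thus the intersection index on $K(X)$ is literally the pullback along $\pi$ of the intersection index on $G(X)$.

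Next I would apply Theorem~\ref{strongKht} to the $n$-tuple $\pi(L_1),\dots,\pi(L_n)$, which consists of psef type elements by the cited lemma. It yields inequality~(\ref{nefaf}) for that tuple, and by the identification of indices just described this is precisely $[L_1,L_2,L_3,\dots,L_n]^2\ge [L_1,L_1,L_3,\dots,L_n]\,[L_2,L_2,L_3,\dots,L_n]$ for arbitrary spaces in $K(X)$, with no very-bigness hypothesis on any of them; in particular this is Lemma~\ref{weakKht} with the very-big assumption on $L_3,\dots,L_n$ dropped. Finally, the corollaries~(\ref{alcor1}) and~(\ref{nefcor2}) of Theorem~\ref{strongKht} are already stated for arbitrary psef type elements, so applying them to $\pi(L_1),\dots,\pi(L_n)$ and using the same identification gives~(\ref{alcor1}) and~(\ref{nefcor2}) for all $L_1,\dots,L_n\in K(X)$. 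There is no real obstacle beyond the verification in the previous paragraph, which is exactly why the polarization formulas of Section~\ref{sect2} (Theorem~\ref{maint}, Corollary~\ref{ongindex}) were set up.
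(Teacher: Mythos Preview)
Your proposal is correct and follows exactly the route the paper intends: the corollary in the paper has no separate proof because the reasoning is contained in its statement (``since they represent psef type elements in the group $G(X)$''), relying on the immediately preceding lemma that every $L\in K(X)$ maps to a psef type element of $G(X)$, together with Theorem~\ref{strongKht} and its two corollaries. Your only addition is to spell out explicitly, via the polarization machinery of Section~\ref{sect2}, that the intersection index on $K(X)$ is the pullback along $\pi$ of the one on $G(X)$---a point the paper treats as implicit.
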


\subsection{Inequalities for nef  divisors}
Let $X$ be an irreducible  projective  $n$-dimensional variety.

\begin{definition} A divisor  $D\in \Div(X)$   is
  {\it  nef divisor} if there is an ample divisor $D$  such that  for every natural number $q$ the divisor  $qD + D'$ is  ample.
    \end{definition}

One can show that very ample divisors, ample divisors  and  nef divisors form  additive sub-semigroups in the group $\Div(X)$ of  divisors.

 \begin{lemma} Any basepoint free  divisor  $D$ is a nef divisor.
 \end{lemma}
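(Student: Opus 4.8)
The plan is to produce, for a given basepoint free divisor $D$, an explicit ample divisor $D'$ witnessing the nef property, namely an arbitrary ample divisor on $X$: such a divisor exists because $X$ is projective and hence carries a very ample (in particular ample) divisor. It then remains to check that $qD+D'$ is ample for every $q\geq 1$. Since $\Div_\mathrm{free}(X)$ is a sub-semigroup of $\Div(X)$, the divisor $qD=D+\dots+D$ is again basepoint free, so the whole assertion reduces to one claim: \emph{the sum of a basepoint free divisor and an ample divisor is ample.}

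To prove this claim I would first reduce to the very ample case. If $A$ is ample, then some positive multiple $mA$ is very ample; writing $B$ for the basepoint free divisor, we have $m(B+A)=mB+mA$, where $mB$ is again basepoint free and $mA$ is very ample. A divisor admitting a very ample positive multiple is ample, so it suffices to show that a basepoint free divisor plus a very ample divisor is very ample. Using the isomorphism $D\colon K_\mathrm{emb}(X)\to\Div_\mathrm{emb}(X)$ of Theorem \ref{embesemigroup}, the analogous isomorphism $D\colon K_\mathrm{reg}(X)\to\Div_\mathrm{free}(X)$, and the characterization recalled in Section \ref{sect3} of basepoint free divisors $E$ as those with $L(E)\in K_\mathrm{reg}(X)$, this translates into the purely semigroup-theoretic statement: if $L_1\in K_\mathrm{reg}(X)$ and $L_2\in K_\mathrm{emb}(X)$, then $L_1L_2\in K_\mathrm{emb}(X)$.

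This last statement I would prove with the Segre embedding. The surjection $L_1\otimes L_2\to L_1L_2$ dualizes to a linear embedding $\Bbb P((L_1L_2)^*)\hookrightarrow\Bbb P(L_1^*\otimes L_2^*)$, and a direct computation with evaluation functionals identifies $\Phi_{L_1L_2}$ with the composition of the product map $(\Phi_{L_1},\Phi_{L_2})\colon X\to\Bbb P(L_1^*)\times\Bbb P(L_2^*)$ with the Segre embedding, corestricted to the closed subscheme $\Bbb P((L_1L_2)^*)$, which contains the image. Since $L_2\in K_\mathrm{emb}(X)$ the map $\Phi_{L_2}$ is a closed immersion, and a morphism into a product of two separated schemes, one of whose coordinate maps is a closed immersion, is itself a closed immersion; composing with the closed Segre embedding and corestricting then shows $\Phi_{L_1L_2}$ is a closed immersion, i.e. $L_1L_2\in K_\mathrm{emb}(X)$. (That $L_1L_2\in K_\mathrm{reg}(X)$ at all is automatic, $K_\mathrm{reg}(X)$ being a sub-semigroup.) The only non-formal ingredient — the point to be careful about rather than a real difficulty — is this identification of $\Phi_{L_1L_2}$ with a Segre-composed product map together with the standard fact that such a product map inherits being a closed immersion from one of its coordinates; once that is in hand, everything else is bookkeeping with the semigroup structures of Section \ref{sect3} and with the definition of nef.
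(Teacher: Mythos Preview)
Your proof is correct and follows essentially the same approach as the paper: both reduce to the fact that a basepoint free divisor plus a very ample divisor is very ample. The paper's one-line argument simply chooses $D'$ very ample from the outset (which is permitted, since very ample implies ample) and then asserts this fact without justification, thereby bypassing your intermediate reduction from ample to very ample; your Segre-embedding argument supplies the proof the paper omits.
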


\begin{proof} If  $D,D'$  are basepoint free  divisors, and  $D'$ is a very ample divisor, then  for any natural number $q$ the divisor  $qD+D'$  is also  a very ample divisor.
\end{proof}

\begin{lemma} The natural homomorphism  $\mathcal L:\Div(X)\to G(X)$ maps  the semigroups of  very ample divisors,  ample divisors and nef divisors  respectively to the semigroups of  very big  elements,  big elements and  psef type elements.
\end{lemma}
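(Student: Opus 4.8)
The statement to prove is that the natural homomorphism $\mathcal L : \Div(X) \to G(X)$ carries very ample divisors to very big elements, ample divisors to big elements, and nef divisors to psef type elements. The plan is to treat the three classes one at a time, using the already-established dictionary between $K_\mathrm{reg}(X)$, $\Div_\mathrm{free}(X)$ and the Kodaira map, and then chasing the defining conditions (ampleness = some power is very ample, nef = some ample perturbation stays ample) through the homomorphism $\mathcal L$, which is additive and compatible with passing to multiples.

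First I would handle the very ample case. If $D$ is a very ample divisor, then by definition the Kodaira map $\Phi_{L(D)}$ is an embedding; in particular it is a birational isomorphism of $X$ onto its image $Y = \Phi_{L(D)}(X)$. Thus $L(D) \in K_\mathrm{reg}(X)$ is a very big space in the sense defined before Lemma \ref{weakKht}, and $\mathcal L(D)$ is represented by a very big space, hence is a very big element of $G(X)$. Here I would appeal to Theorem \ref{embesemigroup} and the corollary identifying $\mathcal L$ with the extension of $L : \Div_\mathrm{emb}(X) \to K(X)$; the only point to check is that "embedding" implies "birational onto image," which is immediate.

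Next, the ample case. If $D$ is ample, then some positive multiple $mD$ is very ample. Since $\mathcal L$ is a homomorphism, $\mathcal L(mD) = \mathcal L(D)^m$ (writing the group $G(X)$ multiplicatively as the paper does). By the previous paragraph $\mathcal L(mD)$ is a very big element, so $\mathcal L(D)^m$ is very big, which is exactly the definition of $\mathcal L(D)$ being a big element of $G(X)$. The nef case is then a formal transcription: if $D$ is nef, there is an ample divisor $D'$ such that $qD + D'$ is ample for every natural $q$. Applying $\mathcal L$ and the ample case just proved, $\mathcal L(D') $ is a big element and $\mathcal L(qD + D') = \mathcal L(D)^q \mathcal L(D')$ is a big element for every $q$; this is precisely the definition of $\mathcal L(D)$ being a psef type element. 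Finally I would note that the three classes are sent into the corresponding sub-semigroups (not just individual elements) because $\mathcal L$ is a homomorphism and the target classes were already shown to be sub-semigroups.

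The only genuine content, and hence the main (mild) obstacle, is the very ample step: one must be sure that the linear system defining a very ample divisor $D$ is exactly $L(D)$ up to the identification in Theorem \ref{embesemigroup}, so that "$\Phi$ is an embedding" translates into "$L(D)$ is a very big space." Everything after that is a routine diagram chase using additivity of $\mathcal L$ and compatibility with multiples; no new geometry is needed. I would keep the write-up to a few lines per case.
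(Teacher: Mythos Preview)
Your proposal is correct and follows essentially the same argument as the paper's own proof: embedding implies birational onto the image (very ample $\to$ very big), then pass to multiples via the homomorphism $\mathcal L$ (ample $\to$ big), then use the definition of nef and the additivity of $\mathcal L$ (nef $\to$ psef type). The paper's write-up is even terser than yours but the content and the order of the three steps are identical.
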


\begin{proof} If the Kodaira  map $\Phi_{L(D)}$ is an embedding, then $\Phi_{L(D)}$ provides a birational isomorphism between $X$ and $\Phi_{L(D)} (X)$.
If a divisor $k D$ is very ample, then the element $\mathcal L(D)^k\in G(X)$ can be represented by a very big space, thus, $\mathcal L(D)$ is a big element in $G(X)$. If a divisor $D$ is nef, then for some ample divisor $D'$  for any $q>0$ the divisor $qD+D'$ is ample, thus, the element $\mathcal L(qD+D')=(\mathcal L(D))^q \mathcal L(D')$ is a big element in $G(X)$. Since $\mathcal L( D')$ is a big element in $G(X)$, it implies that $\mathcal L (D)$ is a psef type element in $G(X)$.
\end{proof}

Since the homomorphism $\mathcal L:\Div(X)\to G(X)$ preserves the intersection indices, we obtain the following theorem:

\begin{theorem}\label{KhT-1} If $ D_1, D_2,  D_3,\dots,  D_n$ are
nef  divisors on an irreducible projective variety $X$,  then the following inequality holds:
\begin{equation}\label{divAF}
[D_1, D_2, D_3,\dots, D_n]^2 \geq [D_1, D_1,  D_3, \dots, D_n] [D_2, D_2,   D_3, \dots, D_n].
\end{equation}
\end{theorem}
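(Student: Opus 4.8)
The plan is to reduce the statement entirely to the already-established Alexandrov--Fenchel type inequality for psef type elements of the Grothendieck group $G(X)$, namely Theorem \ref{strongKht}, via the homomorphism $\mathcal L:\Div(X)\to G(X)$. First I would recall that, by the corollary following Theorem \ref{regulargroup}, the map $L:\Div_\mathrm{emb}(X)\to K(X)$ extends to a homomorphism $\mathcal L:\Div(X)\to G(X)$ embedding the group of divisors on $X$ into $G(X)$, and that by the last theorem of Section \ref{sect5} this embedding respects the intersection indices: if $D_1,\dots,D_n\in\Div(X)$ then $[D_1,\dots,D_n]=[\mathcal L(D_1),\dots,\mathcal L(D_n)]$.

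The second step is to invoke the lemma just proved above, which asserts that $\mathcal L$ carries the sub-semigroup of nef divisors into the sub-semigroup of psef type elements of $G(X)$. Hence, given nef divisors $D_1,D_2,D_3,\dots,D_n$ on $X$, the virtual spaces $\mathcal L_i=\mathcal L(D_i)$, $i=1,\dots,n$, are psef type elements of $G(X)$.

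The third step is simply to apply Theorem \ref{strongKht} to the $n$-tuple $\mathcal L_1,\dots,\mathcal L_n$, which yields
\begin{equation*}
[\mathcal L_1,\mathcal L_2,\mathcal L_3,\dots,\mathcal L_n]^2\geq [\mathcal L_1,\mathcal L_1,\mathcal L_3,\dots,\mathcal L_n][\mathcal L_2,\mathcal L_2,\mathcal L_3,\dots,\mathcal L_n],
\end{equation*}
and then to translate each intersection index back to $\Div(X)$ using the index-preserving property from the first step, obtaining exactly inequality (\ref{divAF}).

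I do not expect any serious obstacle here: all the work has been done in the preceding sections. The one point that deserves a sentence of care is making sure the intersection index on $G(X)$, which is defined as the $n$-polarization of the self-intersection polynomial $\D$ on $K(X)$ extended to $G(X)$, genuinely restricts to the classical intersection index of divisors under $\mathcal L$ --- but this is precisely the content of the final theorem of Section \ref{sect5}, which in turn rests on Corollary \ref{self-intersection} identifying $\D(L(D))$ with $\D(D)$. Once that identification is in hand, the argument is a two-line deduction.
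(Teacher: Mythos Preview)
Your proposal is correct and follows exactly the paper's own argument: the paper derives Theorem \ref{KhT-1} immediately from the preceding lemma (that $\mathcal L$ sends nef divisors to psef type elements) together with the fact that $\mathcal L$ preserves intersection indices, which is precisely your three-step reduction to Theorem \ref{strongKht}. There is nothing to add.
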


\begin{corollary}\label{KhT-2} For any $2\leq m\leq n$ and for any $n$-tuple of  nef  divisors  $ D_1,\dots,  D_n \in \Div (X)$, we have:
\begin{equation}\label{divalcor1}
 \prod_{1\leq i\leq m} [ D_i,\dots, D_i,  D_{m+1}, \dots,  D_n]\leq [ D_1,\dots, D_n]^m.
 \end{equation}
\end{corollary}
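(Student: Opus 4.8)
The plan is to obtain Corollary \ref{KhT-2} from inequality (\ref{alcor1}) for psef type elements of $G(X)$ by transporting it along the homomorphism $\mathcal L:\Div(X)\to G(X)$, exactly as Theorem \ref{KhT-1} was obtained from Theorem \ref{strongKht}. First I would invoke the lemma preceding Theorem \ref{KhT-1}: since each $D_i$ is nef, each image $\mathcal L(D_i)$ is a psef type element of $G(X)$. Next, apply inequality (\ref{alcor1}) to the $n$-tuple $\mathcal L(D_1),\dots,\mathcal L(D_n)$, which gives
\[
\prod_{1\leq i\leq m}[\mathcal L(D_i),\dots,\mathcal L(D_i),\mathcal L(D_{m+1}),\dots,\mathcal L(D_n)]\leq [\mathcal L(D_1),\dots,\mathcal L(D_n)]^{m}.
\]
Finally, because $\mathcal L$ preserves intersection indices, each bracket on both sides equals the corresponding bracket of divisors, and the displayed inequality is literally (\ref{divalcor1}). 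This is the entire argument; no ingredient beyond what is already assembled in the paper is required.

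If one prefers a derivation that does not pass through $G(X)$, the same inequality can be extracted directly from Theorem \ref{KhT-1}. Fix the nef $(n-m)$-tuple $E=(D_{m+1},\dots,D_n)$ as a passive tail. Applying (\ref{divAF}) with the two active slots occupied by $D_i$ and $D_j$ and the remaining $n-2$ slots filled by repeated copies of $D_i$, $D_j$ together with $E$, one sees that for each pair $i,j$ the finite sequence whose $\ell$-th term is the intersection index of $\ell$ copies of $D_i$, $m-\ell$ copies of $D_j$ and $E$ is log-concave; hence that index is at least a weighted geometric mean of the two pure indices $[D_i,\dots,D_i,E]$ and $[D_j,\dots,D_j,E]$. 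Chaining such estimates through the intermediate mixed indices that interpolate between $[D_1,\dots,D_m,E]$ and the $m$ pure indices, and multiplying the $m$ resulting inequalities, yields (\ref{divalcor1}); this is the standard unfolding of an Alexandrov--Fenchel inequality into a (\ref{cor1})-type inequality, the exact parallel of how Corollary \ref{AFcor1} is deduced from Theorem \ref{A-F}.

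The main point is that there is no genuine obstacle here: all the geometric content is already packed into Theorem \ref{KhT-1} (equivalently Theorem \ref{strongKht}), which in turn rests only on the two-dimensional Brunn--Minkowski inequality (Theorem \ref{B-M} for $n=2$) and a Bertini-type irreducibility statement. For the first route one only checks the compatibilities of $\mathcal L$ with intersection indices and with the nef/psef correspondence, both already established; for the second route the one mildly delicate point is the combinatorial bookkeeping of the repeated slots along the chain of Alexandrov--Fenchel inequalities, which is routine.
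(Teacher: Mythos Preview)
Your proposal is correct and matches the paper's approach: the paper gives no explicit proof of Corollary \ref{KhT-2} but derives the whole block of nef-divisor inequalities by the sentence ``Since the homomorphism $\mathcal L:\Div(X)\to G(X)$ preserves the intersection indices, we obtain\ldots'', i.e.\ exactly your first route of transporting inequality (\ref{alcor1}) along $\mathcal L$ using the lemma that nef divisors map to psef type elements. Your second route---deriving (\ref{divalcor1}) directly from Theorem \ref{KhT-1} by the standard log-concavity chaining---is the same formal deduction the paper alludes to when it says these statements follow from the Alexandrov--Fenchel inequality just as Corollary \ref{AFcor1} follows from Theorem \ref{A-F}.
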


Theorem \ref{KhT-1} and Corollary \ref{KhT-2} are usually known as Khovanskii--Teissier inequalities. For $m=2$, inequality (\ref{divalcor1}) provides the Alexandrov--Fenchel type inequality  (\ref{divAF}) for nef  divisors. For $m=n$,  inequality (\ref{divalcor1}) is symmetric in the nef  divisors   $ D_1,\dots, D_n$.

\begin{corollary}  For any $2\leq m\leq n$ and for any collection of nef   divisors  $D_1,  D_2$, $ D_{m+1},\dots, D_n$, we have:
\begin{equation}\label{nefcor2}\begin{aligned}
 [ D_1,\dots, D_1, D_{m+1},\dots, D_n]^ {\frac{1}{m}}+  [ D_2,\dots, D_2, D_{m+1},\dots, D_n]^ {\frac{1}{m}}\\
\leq  [ D_1 + D_2,\dots, D_1 + D_2, D_{m+1},\dots, D_n]^{\frac{1}{m}}.
\end{aligned}\end{equation}
\end{corollary}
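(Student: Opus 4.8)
The plan is to deduce the asserted inequality from the Brunn--Minkowski type inequality for psef type elements in $G(X)$ established above, by transporting it along the homomorphism $\mathcal L:\Div(X)\to G(X)$ — exactly the device already used to pass from Theorem~\ref{strongKht} and its corollaries to Theorem~\ref{KhT-1} and Corollary~\ref{KhT-2}. The three facts about $\mathcal L$ that I would invoke are: that $\mathcal L$ is a homomorphism from the additive group $\Div(X)$ to the multiplicative group $G(X)$, so that $\mathcal L(D_1+D_2)=\mathcal L(D_1)\,\mathcal L(D_2)$; that, by the lemma immediately preceding Theorem~\ref{KhT-1}, $\mathcal L$ carries nef divisors to psef type elements of $G(X)$, which form a sub-semigroup, so that $\mathcal L(D_1),\mathcal L(D_2),\mathcal L(D_{m+1}),\dots,\mathcal L(D_n)$ and also $\mathcal L(D_1+D_2)=\mathcal L(D_1)\mathcal L(D_2)$ are all psef type; and that $\mathcal L$ preserves the intersection index, i.e.\ $[D_{i_1},\dots,D_{i_n}]=[\mathcal L(D_{i_1}),\dots,\mathcal L(D_{i_n})]$ for any choice of the $n$ indices.

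With these in hand the argument is short. Setting $\mathcal L_i:=\mathcal L(D_i)$, the Brunn--Minkowski type inequality for psef type elements applied to the $n$-tuple in which $\mathcal L_1$ (resp.\ $\mathcal L_2$, resp.\ $\mathcal L_1\mathcal L_2$) occupies the first $m$ slots reads
\begin{equation*}\begin{aligned}
[\mathcal L_1,\dots,\mathcal L_1,\mathcal L_{m+1},\dots,\mathcal L_n]^{1/m}+[\mathcal L_2,\dots,\mathcal L_2,\mathcal L_{m+1},\dots,\mathcal L_n]^{1/m}\\
\le[\mathcal L_1\mathcal L_2,\dots,\mathcal L_1\mathcal L_2,\mathcal L_{m+1},\dots,\mathcal L_n]^{1/m}.
\end{aligned}\end{equation*}
Rewriting $\mathcal L_1\mathcal L_2=\mathcal L(D_1+D_2)$ and replacing every intersection index of $\mathcal L$-images by the corresponding intersection index of the divisors themselves turns this into the inequality claimed in the corollary. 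As a sanity check, the case $m=n$ recovers the Brunn--Minkowski type inequality for the top self-intersection numbers of nef divisors, the projective counterpart of Theorem~\ref{aBM}.

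There is essentially no obstacle once the psef type version is granted: the step above is purely formal bookkeeping. The real content sits one level down — the psef type Brunn--Minkowski inequality rests on Theorem~\ref{strongKht} (the Alexandrov--Fenchel type inequality for psef type elements) together with the formal passage from an Alexandrov--Fenchel type inequality to its Brunn--Minkowski form, the degree-$m$ analogue of Theorem~\ref{B-MA-F}, mirroring how (\ref{cor2})/Corollary~\ref{AFcor2} is extracted from the classical Alexandrov--Fenchel inequality in Section~\ref{sect6}. If one preferred to avoid mentioning psef type elements altogether, an alternative route of the same length would derive the inequality directly from (\ref{divAF})/(\ref{divalcor1}) for nef divisors by running that same Alexandrov--Fenchel $\Rightarrow$ Brunn--Minkowski mechanism on $\Div(X)$; but the transport along $\mathcal L$ is the cleaner option and is the one consistent with the paper's strategy.
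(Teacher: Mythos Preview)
Your proposal is correct and matches the paper's approach: the paper does not write out a separate proof for this corollary, but its placement immediately after Theorem~\ref{KhT-1} and Corollary~\ref{KhT-2}, together with the sentence ``Since the homomorphism $\mathcal L:\Div(X)\to G(X)$ preserves the intersection indices, we obtain the following theorem'' governing that whole block, makes clear that the intended argument is exactly the transport along $\mathcal L$ from the psef type Brunn--Minkowski inequality that you carry out. Your alternative route via (\ref{divAF}) and the formal Alexandrov--Fenchel $\Rightarrow$ Brunn--Minkowski passage is also a legitimate reading of the paper's logic, and you are right that the two are interchangeable here.
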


For $m=n$, inequality (\ref{nefcor2})
provides the  Brunn--Minkowski  type inequality for nef  divisors.

\section{Algebraic proofs of geometric inequalities}\label{sect8}

The Alexandrov--Fenchel inequality  for convex bodies in $\Bbb R^n$  automatically follows from the corresponding algebraic inequality (see  \cite{Kh88}).

\begin{lemma}\label{intBKK}  For any $n$-tuple of integral polytopes $\Delta_1,\dots, \Delta_n\subset \Bbb R^n$, the Alexandrov--Fenchel inequality (\ref{eq4}) holds.
\end{lemma}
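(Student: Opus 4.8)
The plan is to realize $\Delta_1,\dots,\Delta_n$ as Newton polytopes of $(\Bbb C^*)^n$-invariant subspaces of rational functions, to translate the relevant mixed volumes into intersection indices via the BKK theorem, and then to invoke the algebraic Alexandrov--Fenchel inequality already established in Section \ref{sect7}. Concretely, set $X=(\Bbb C^*)^n$ and identify $\Bbb Z^n$ with the lattice of characters. For each $i$ put $A_i=\Delta_i\cap \Bbb Z^n$; since $\Delta_i$ is an integral polytope it coincides with the convex hull of its lattice points, so $\Delta(A_i)=\Delta_i$. Let $L_{A_i}\in K_n\subset K(X)$ be the span of the characters lying in $A_i$ (see Section \ref{subsec4.2}).

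By the BKK theorem (Theorem \ref{BKK}), for any choice of indices $i_1,\dots,i_n\in\{1,\dots,n\}$ one has
\[
[L_{A_{i_1}},\dots,L_{A_{i_n}}]=n!\,MV_n(\Delta_{i_1},\dots,\Delta_{i_n}),
\]
since the left-hand side is, by the definition of the intersection index on $K(X)$, the number of solutions in $(\Bbb C^*)^n$ of a generic system $f_k\in L_{A_{i_k}}$, and the BKK theorem evaluates that number as $n!\,MV_n(\Delta_{i_1},\dots,\Delta_{i_n})$. I would apply this identity to the three tuples entering the Alexandrov--Fenchel inequality, namely to $(\Delta_1,\Delta_2,\Delta_3,\dots,\Delta_n)$, to $(\Delta_1,\Delta_1,\Delta_3,\dots,\Delta_n)$, and to $(\Delta_2,\Delta_2,\Delta_3,\dots,\Delta_n)$.

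Then I would invoke the algebraic Alexandrov--Fenchel inequality in the form of Corollary \ref{algAF}, which asserts that inequality (\ref{nefaf}) holds for \emph{arbitrary} spaces $L_1,\dots,L_n\in K(X)$, in particular for $L_{A_1},\dots,L_{A_n}$; thus
\[
[L_{A_1},L_{A_2},L_{A_3},\dots,L_{A_n}]^2\ \geq\ [L_{A_1},L_{A_1},L_{A_3},\dots,L_{A_n}]\,[L_{A_2},L_{A_2},L_{A_3},\dots,L_{A_n}].
\]
Substituting the three BKK identities from the previous step and cancelling the common factor $(n!)^2$ yields exactly inequality (\ref{eq4}) for the integral polytopes $\Delta_1,\dots,\Delta_n$.

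I do not anticipate a genuine obstacle: once Sections \ref{sect4}--\ref{sect7} are in place, the argument is bookkeeping. The two points that deserve a word of care are the identity $\Delta(\Delta_i\cap\Bbb Z^n)=\Delta_i$ (this is exactly where integrality of the polytopes enters) and the use of Corollary \ref{algAF}, which removes the very-bigness hypothesis of Lemma \ref{weakKht} and so lets the algebraic inequality be applied to the $L_{A_i}$ irrespective of the geometry of the $\Delta_i$. The remaining compatibility — that $[\,\cdot\,,\dots,\cdot\,]$ and $n!\,MV_n$ agree on every repeated tuple of the $L_{A_i}$ — is immediate, since both are $n$-polarizations and they agree on the diagonal by $\D(L_A)=n!\,V(\Delta(A))$.
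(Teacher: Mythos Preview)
Your proposal is correct and follows essentially the same route as the paper: set $A_i=\Delta_i\cap\Bbb Z^n$, use the BKK theorem to identify $[L_{A_{i_1}},\dots,L_{A_{i_n}}]$ with $n!\,MV_n(\Delta_{i_1},\dots,\Delta_{i_n})$, and then apply Corollary \ref{algAF}. You spell out a few points the paper leaves implicit (the role of integrality in $\Delta(A_i)=\Delta_i$, and the application of BKK to all three tuples), but the argument is the same.
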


\begin{proof} Let $A_i$ be the intersection of $\Delta_i$ with the lattice $ \Bbb Z^n$ and let $L_{A_i}$ be the spaces corresponding to the sets $A_i$. By the BKK theorem (see Theorem \ref{BKK}), the intersection index $[L_{A_1},\dots,L_{A_n}]$ is equal to  $n!MV_n(\Delta_1,\dots, \Delta_n)$. Thus, the lemma follows from Corollary \ref{algAF}.
\end{proof}

\begin{theorem} The Alexandrov--Fenchel inequality (\ref{eq4}) holds for any $n$-tuple of convex bodies in $\Bbb R^n$.
\end{theorem}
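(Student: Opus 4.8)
The plan is to derive the general Alexandrov--Fenchel inequality from the polytope case, Lemma~\ref{intBKK}, by approximation. Two standard facts about mixed volumes will be used: the homogeneity of $MV_n$ in each argument under dilation (multiplying the $i$-th body by $\lambda\ge 0$ multiplies $MV_n$ by $\lambda$), and the continuity of $MV_n$ with respect to the Hausdorff metric on the semigroup of compact convex bodies in $\Bbb R^n$.

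First I would treat rational polytopes. Suppose $\Delta_1,\dots,\Delta_n$ are convex polytopes whose vertices all have rational coordinates. Pick a natural number $N$ with $N\Delta_1,\dots,N\Delta_n$ all having vertices in $\Bbb Z^n$. By homogeneity, replacing each $\Delta_i$ by $N\Delta_i$ scales both sides of (\ref{eq4}) by the same positive factor $N^{2n}$, so (\ref{eq4}) for $\Delta_1,\dots,\Delta_n$ is equivalent to (\ref{eq4}) for the integral polytopes $N\Delta_1,\dots,N\Delta_n$, which is exactly Lemma~\ref{intBKK}.

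Next I would pass to the limit. Rational polytopes are dense, in the Hausdorff metric, among all compact convex bodies in $\Bbb R^n$: an arbitrary convex body is a Hausdorff limit of polytopes (for instance, convex hulls of finite $\varepsilon$-nets contained in it), and an arbitrary polytope $\operatorname{conv}\{v_1,\dots,v_k\}$ is the Hausdorff limit of the rational polytopes obtained by perturbing each $v_j$ to a nearby rational point. Hence for arbitrary convex bodies $\Delta_1,\dots,\Delta_n$ one can choose rational polytopes $\Delta_i^{(m)}\to\Delta_i$; inequality (\ref{eq4}) holds for each tuple $(\Delta_1^{(m)},\dots,\Delta_n^{(m)})$ by the previous step, and letting $m\to\infty$ and invoking the continuity of $MV_n$ in each argument gives (\ref{eq4}) for $\Delta_1,\dots,\Delta_n$.

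The main obstacle is the continuity statement, which is the only genuinely analytic ingredient. I would obtain it from the expansion (\ref{eq2}) of $MV_n$ as an alternating sum of volumes of Minkowski sums: Minkowski addition is continuous in the Hausdorff metric, so it suffices to know that the volume $V(\Delta)$ of a compact convex body $\Delta$ depends continuously on $\Delta$ in the Hausdorff metric. That in turn follows by squeezing $\Delta$ between inner and outer convex approximations at scale $\varepsilon$ and using the monotonicity of $V$ --- a standard and elementary estimate, but the one point where a limiting argument cannot be avoided.
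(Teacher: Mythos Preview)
Your argument is correct and follows exactly the paper's approach: pass from integral polytopes (Lemma~\ref{intBKK}) to rational polytopes by homogeneity, and then to arbitrary convex bodies by Hausdorff approximation and continuity of mixed volume. The only difference is that you spell out the density and continuity steps in more detail than the paper does.
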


\begin{proof} By Lemma \ref{intBKK}, the  Alexandrov--Fenchel inequality holds for integral polytopes. Since volume is a homogeneous polynomial on the cone of convex bodies, the inequality also  holds for polytopes with rational vertices. Any convex body can be approximated in the Hausdorff metric by polytopes with rational vertices.
This implies the theorem because mixed volume is continuous in the Hausdorff metric.
\end{proof}

\section{Minkowski-type criterion for divisors}\label{sect9}

Minkowski discovered (see Theorem \ref{Mink})  a  necessary and sufficient condition on a $k$-tuple (where  $k\leq n)$)  of  convex bodies in $\Bbb R^n$, under  which the mixed volume of any $n$-tuple of convex bodies in $\Bbb R^n$, containing  the given $k$-tuple of convex bodies, vanish. In this section, we present an analogous condition  on $k\leq n$ basepoint free linear systems in the $n$-dimensional projective variety under which generic members of these systems have empty intersection.

Let $X$ be an $n$-dimensional projective variety.
With any basepoint free divisor $D$ on $X$ one associates a linear system of effective divisors $D+(f)$, where $f\in L(D)$ linearly equivalent to $D$.

\begin{definition} The {\it  Kodaira dimension} of a basepoint free divisor $D$ is the dimension of the image $Y=\Phi_{L(D)}(X)$ of $X$ under the Kodaira map $\Phi_{L(D)}:X\to \Bbb P (L^*(D))$.
\end{definition}

One can show that {\it the Kodaira dimension of a divisor  $D$} is equal to the real dimension of the NO body $\Delta(L(D))$ of the space $L(D)$ (see  \cite{K-K12}).

\begin{definition} A set $\{D_1,\dots,D_k\}$ of $k\leq n $
basepoint free  divisors in a projective variety $X$ is {\it  dependent} if   for some nonempty subset $J\in \{1,\dots,k\}$  the divisor  $D_J=\sum_{j\in J} D_j$ has Kodaira dimension smaller  than the number $|J|$ of elements in the set $J$.
\end{definition}

\begin{theorem}[Minkowski type criterion]\label{theorem9.1}
 Let $D_1,\dots,D_k$ be a collection of $k\leq n$ basepoint free  divisors on an irreducible $n$-dimensional projective variety~$X$. For generic functions $f_i\in L(D_i)$, the effective divisors $D_1+(f_1),\dots, D_k+(f_k)$ have  empty intersection if and only if the divisors $D_1,\dots,D_k$ are dependent. In other words, they have empty intersection if and only if for some set $J$ the real dimension  of the NO body $\Delta( L(D_J))$ is smaller than $|J|$.
\end{theorem}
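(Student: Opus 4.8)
The plan is to prove the equivalence in its first form (in terms of the Kodaira dimension $\kappa$); the second, Newton--Okounkov, form is then immediate because $\kappa(D_J)$ equals the real dimension of $\Delta(L(D_J))$. So the goal is: for generic $f_i\in L(D_i)$, the divisors $D_i+(f_i)$ have empty common intersection if and only if some nonempty $J\subseteq\{1,\dots,k\}$ has $\kappa(D_J)<|J|$. I will treat the two implications separately.

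\emph{Dependence $\Rightarrow$ empty intersection.} Suppose $\kappa(D_J)<|J|=:r$; reindex so that $J=\{1,\dots,r\}$. Each $D_j$ is basepoint free, hence so is $D_J=\sum_{j\in J}D_j$, and $D_j+(f_j)=\Phi_{L(D_j)}^*(H_j)$ for the hyperplane $H_j\subset\Bbb P(L^*(D_j))$ cut out by $f_j$. Consider the morphism $\Psi=\bigl(\Phi_{L(D_j)}\bigr)_{j\in J}\colon X\to\prod_{j\in J}\Bbb P(L^*(D_j))$; its image $Z=\Psi(X)$ is irreducible, and $\dim Z=\kappa(D_J)$, since composing $\Psi$ with the Segre embedding realises the Kodaira map of $\prod_{j\in J}L(D_j)$, a space equivalent to $L(D_J)$. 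Then $\bigcap_{j\in J}(D_j+(f_j))=\Psi^{-1}\bigl(Z\cap\bigcap_{j\in J}\mathrm{pr}_j^{-1}(H_j)\bigr)$, and each $Z\cap\mathrm{pr}_j^{-1}(H_j)$ is a generic member of a basepoint free linear system on $Z$ (no point of $Z$ lies on every hyperplane pulled back from the $j$-th factor). Cutting the irreducible variety $Z$ of dimension $\kappa(D_J)$ successively by these $r$ generic divisors lowers the dimension each time, so since $r>\dim Z$ the final intersection is empty; hence $\bigcap_{i=1}^k(D_i+(f_i))=\emptyset$.

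\emph{Non-dependence $\Rightarrow$ nonempty intersection.} Assume $\kappa(D_J)\ge|J|$ for every nonempty $J\subseteq\{1,\dots,k\}$. First reduce to $k=n$: fix a very ample $H$ and cut $X$ by $n-k$ generic members of $|H|$; by Bertini the result $X'$ is an irreducible projective variety of dimension $k$, the restrictions $D_i|_{X'}$ stay basepoint free, and a routine dimension count (a very ample divisor meets every positive-dimensional subvariety) shows $\kappa_{X'}\bigl(\sum_{j\in J}D_j|_{X'}\bigr)\ge\min(\kappa_X(D_J),k)\ge|J|$, i.e.\ non-dependence is inherited; since a common point on $X'$ is a common point on $X$ we may assume $k=n$. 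For $k=n$ it suffices to show $[D_1,\dots,D_n]>0$, because then a generic $n$-tuple $(f_1,\dots,f_n)$ has $[D_1,\dots,D_n]>0$ simple common zeros on an admissible set (Theorem~\ref{index}), hence a common zero in $X$. I would prove $[D_1,\dots,D_n]>0$ by induction on $n$. For $n=1$, $\kappa(D_1)\ge1$ makes $\Phi_{L(D_1)}$ nonconstant, so $[D_1]=\D(D_1)>0$ by Theorem~\ref{selfdivisor}. For $n>1$, replace a suitably chosen $D_i$ by a generic effective member $E\in|D_i|$ and write $E=\sum_V m_V V$ over its prime components ($m_V>0$); by multilinearity and the projection formula $[D_1,\dots,D_n]=\sum_V m_V[\,D_1|_V,\dots,D_{i-1}|_V,D_{i+1}|_V,\dots,D_n|_V\,]$, so it is enough to produce one component $V$, irreducible of dimension $n-1$, on which the remaining $n-1$ divisors restrict to basepoint free divisors still satisfying the non-dependence condition; the induction hypothesis then yields a positive summand.

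\emph{Where the difficulty lies.} Producing such a component $V$ is the heart of the matter and is precisely the algebraic incarnation of the Minkowski criterion; the argument parallels the classical convex-geometric proof of Theorem~\ref{Mink}. The delicate point is the choice of $i$ and the behaviour of Kodaira dimensions under restriction. When $\kappa(D_i)\ge2$, a generic member of $|D_i|$ is irreducible (Bertini applied to $\Phi_{L(D_i)}$) and the restrictions of the other divisors keep their Kodaira dimensions, and those of all partial sums, large enough --- here the hypotheses $\kappa_X\bigl(D_i+\sum_{j\in J}D_j\bigr)\ge|J|+1$ are exactly what is needed. When $\kappa(D_i)=1$ for every available $i$ (the ``pencil'' case), a generic member of $|D_i|$ is a generic fibre of a morphism to a curve --- possibly reducible --- and $D_i$ restricts to the trivial divisor, which would break the induction; one must then restrict along a divisor of larger Kodaira dimension (non-dependence forces $\kappa_X(D_1+\dots+D_n)=n$, so such a divisor, or an iterated construction, is available) and re-derive the descended inequalities $\kappa_V\bigl(\sum_{j\in J}D_j|_V\bigr)\ge|J|$ with care. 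I expect this bookkeeping, together with the accompanying Bertini-type irreducibility statements, to be the main work; everything else is formal. An alternative route --- deducing the case $k=n$ from the convex Minkowski criterion (Theorem~\ref{Mink}) via the volume formula of Theorem~\ref{ONBKK} --- runs into the obstacle that for a fixed faithful valuation the bodies $\Delta(L(D_i))$ and their Minkowski sums may have dimension below $|J|$ even when $\kappa(D_J)\ge|J|$, so one would first need a valuation realising the expected dimensions, which is of comparable difficulty.
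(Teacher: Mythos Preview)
The paper does not prove this theorem; immediately after the statement it refers the reader to \cite{K-K16}. So there is no in-paper argument to compare your proposal against, and I can only assess the proposal on its own terms.

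Your first implication (dependence $\Rightarrow$ empty generic intersection) is correct and essentially complete. The equality $\dim Z=\kappa(D_J)$ holds because $\prod_{j\in J}L(D_j)$ and $L(D_J)$ are equivalent in $K_\mathrm{reg}(X)$ (both map to $D_J$ under the homomorphism of Theorem~\ref{regularsemigroup}), hence have the same completion, and an element integral over $L$ is algebraic over the field of ratios of $L$; in any case only $\dim Z\le\kappa(D_J)$ is needed for the argument.

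For the converse, the reduction to $k=n$ by generic very ample sections is fine, and the inductive strategy is the natural one. But the gap you yourself flag is real and you have not closed it. Even when some $\kappa(D_i)\ge2$ so that a generic $E\in|D_i|$ is irreducible, the assertion that \emph{every} restricted partial sum satisfies $\kappa_E(D_J|_E)\ge|J|$ is not justified; it requires an actual fibre-dimension computation via the product map $X\to Y_J\times Y_i$ together with the hypothesis $\kappa(D_{J\cup\{i\}})\ge|J|+1$, and you have not supplied it. In the pencil case your fallback --- ``restrict along a divisor of larger Kodaira dimension, or an iterated construction'' --- does not fit the induction as stated: none of the $D_i$ has $\kappa\ge2$, and replacing one of them by a sum $\sum D_j$ changes the list of divisors and exits the inductive framework. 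As written, the proposal is a reasonable outline whose central step is missing, rather than a proof.
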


A proof of Theorem \ref{theorem9.1} can be found in  \cite{K-K16}.

\begin{corollary} Let $D_1,\dots,D_n$ be an $n$-tuple of basepoint free  divisors on~$X$. Then the intersection index $[D_1,\dots,D_n]$ is equal to zero if and only if the divisors $D_1,\dots,D_n$ are dependent. In the other words, $[D_1,\dots,D_n]=0$ if and only if for some $J\in \{1,\dots,n\}$ the real dimension of the NO body $\Delta(L(D_j))$ is smaller than $|J|$.

\end{corollary}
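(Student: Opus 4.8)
The plan is to derive this statement directly from Theorem~\ref{theorem9.1}. I would begin by recording the needed translations: for a basepoint free divisor the classical intersection index $[D_1,\dots,D_n]$ coincides with the birationally invariant index of the spaces $L(D_i)\in K_\mathrm{reg}(X)$ (Corollary~\ref{self-intersection}, Theorem~\ref{selfdivisor}), it equals the number of common zeros in an admissible open set of generic functions $f_i\in L(D_i)$, or equivalently the intersection number of the linearly equivalent effective divisors $D_1+(f_1),\dots,D_n+(f_n)$; moreover the Kodaira dimension of $D$ equals $\dim\Delta(L(D))$, so the two formulations of dependence in the statement are literally the same condition. Hence it suffices to show that $[D_1,\dots,D_n]=0$ if and only if $D_1,\dots,D_n$ are dependent.

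For the ``if'' direction I would argue: if the divisors are dependent, then by Theorem~\ref{theorem9.1} the generic effective members $D_1+(f_1),\dots,D_n+(f_n)$ have empty intersection; the intersection number of effective divisors with disjoint supports vanishes, so $[D_1,\dots,D_n]=0$.

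For the ``only if'' direction I would argue by contraposition. If $D_1,\dots,D_n$ are not dependent, Theorem~\ref{theorem9.1} gives that for generic $f_i$ the divisors $D_i+(f_i)$ have nonempty common intersection, and it remains to check that this forces $[D_1,\dots,D_n]>0$. Here I would invoke a Bertini--Kleiman argument applied to the product of the Kodaira maps $\Phi_{L(D_i)}$: since each linear system $|D_i|$ is basepoint free, for a generic choice of the $f_i$ the intersection $\bigcap_i (D_i+(f_i))$ is proper, hence a nonempty finite set of points, and the intersection number at such a configuration is a sum of strictly positive local intersection multiplicities, giving $[D_1,\dots,D_n]\geq 1$.

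The substantive input is Theorem~\ref{theorem9.1} itself (proved in~\cite{K-K16}); apart from that, the one step that needs genuine care is the properness of the generic intersection of basepoint free systems used in the converse direction, which is the classical Bertini fact that a generic member of a basepoint free linear system contains no prescribed positive-dimensional subvariety.
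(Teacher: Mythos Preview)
Your proposal is correct and follows the route the paper intends: the corollary is stated immediately after Theorem~\ref{theorem9.1} with no separate proof, so the paper's implicit argument is exactly ``apply Theorem~\ref{theorem9.1} with $k=n$ and translate between empty generic intersection and vanishing intersection index.'' Your write-up supplies precisely the details one would expect there, including the one nontrivial point (that a nonempty generic intersection of $n$ basepoint free systems is proper, hence contributes positively), which you handle with the standard Bertini/Kleiman argument.
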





\end{document}